\newcommand{\comment}[1]{}
\newcommand{\et}{{~\emph{et al.}~}}
\newcommand{\eg}{{~\emph{e.g.}~}}
\newcommand{\ie}{{\emph{i.e.}}}
\newtheorem{thm}{Theorem}
\newtheorem{lem}{Lemma}
\newtheorem{cor}{Corollary}
\newtheorem{defn}{Definition}
\newtheorem{assum}{Assumption}
\DeclareMathOperator*{\argmin}{argmin}
\newcommand{\led}[1]{\overset{\text{\ding{#1}}}{\leq}}
\newcommand{\lee}[1]{\overset{\text{\ding{#1}}}{=}}
\newcommand{\ged}[1]{\overset{\text{\ding{#1}}}{\geq}}
\newcommand{\Rs}[1]{{\mathbb{R}^{#1}}}
\newcommand{\Oc}[1]{{\mathcal{O}\left(#1\right)}}
\newcommand{\lb}{\left(}
\newcommand{\rb}{\right)}
\newcommand{\la}{\left\langle}
\newcommand{\ra}{\right\rangle}
\newcommand{\g}{\mathfrak{g}}
\newcommand{\Tgi}[1]{{\mbox{T}_{#1}}}
\newcommand{\pii}[1]{ p^{{#1}}}
\newcommand{\fii}[2]{\nabla f_{{#1}_{#2}}}
\newcommand{\fsi}[1]{\nabla f_{\mathcal{S}_{#1}}}
\newcommand{\fsii}[2]{\nabla f_{\mathcal{S}_{#1}^{#2}}}
\newcommand{\dis}[2]{\mbox{d}\left(#1,#2\right)}
\newcommand{\diss}[2]{\mbox{d}^2\left(#1,#2\right)}
\newcommand{\Exp}[2]{\mbox{Exp}_{#1}\left(#2\right)}
\newcommand{\iExp}[2]{\mbox{Exp}^{-1}_{#1}\left(#2\right)}
\newcommand{\iExpp}{\mbox{Exp}^{-1}}
\newcommand{\Tran}[3]{\Gamma_{#1}^{#2}\left(#3\right)}
\newcommand{\Para}[3]{{\mbox{P}_{#1}^{#2}\left(#3\right)}}
\newcommand{\Paras}[2]{{\mbox{P}_{#1}^{#2}}}
\newcommand{\Parass}{{\mbox{P}}}
\newcommand{\0}{\bm{0}}
\newcommand{\xm}{\bm{x}}
\newcommand{\ym}{\bm{y}}
\newcommand{\zm}{\bm{z}}
\newcommand{\etai}[1]{\eta_{#1}}
\newcommand{\etati}[1]{\widetilde{\eta}_{#1}}
\newcommand{\ami}[1]{\bm{a}_{#1}}
\newcommand{\gmi}[1]{\bm{g}_{#1}}
\newcommand{\xmi}[1]{\bm{x}_{#1}}
\newcommand{\xms}{\widetilde{\bm{x}}}
\newcommand{\xmti}[1]{\widetilde{\bm{x}}_{#1}}
\newcommand{\vmi}[1]{\bm{v}_{#1}}
\newcommand{\vmti}[1]{{\bm{v}}_{#1}}
\newcommand{\vmhi}[1]{\widehat{\bm{v}}_{#1}}
\newcommand{\Am}{\bm{A}}
\newcommand{\Imm}{\bm{I}}
\newcommand{\Gm}{\bm{G}}
\newcommand{\Um}{\bm{U}}
\newcommand{\EE}{\mathbb{E}}
\newcommand{\A}{\mathcal{A}}
\newcommand{\Es}{\mathcal{E}}
\newcommand{\M}{\mathcal{M}}
\newcommand{\PP}{\mathcal{P}}
\newcommand{\V}{\mathcal{V}}
\newcommand{\SSm}{\mathcal{S}}
\newcommand{\Si}[2]{\mathcal{S}_{#1}^{#2}}
\begin{document}

%

%

\twocolumn[

\aistatstitle{Faster First-Order Methods for Stochastic Non-Convex Optimization on Riemannian Manifolds}

\aistatsauthor{ Pan Zhou$^{*}$ \And Xiao--Tong Yuan$^{\dagger}$ \And  Jiashi Feng$^{*}$}

\aistatsaddress{ $\ ^{*}$National University of Singapore $\quad$   $\ ^{\dagger}$Nanjing University of Information Science $\&$ Technology\\
 pzhou@u.nus.edu $\qquad\qquad\qquad\ \ $ xtyuan@nuist.edu.cn $\qquad\qquad\qquad\ \ $ elefjia@nus.edu.sg} ]

\begin{abstract}
SPIDER (Stochastic Path Integrated Differential EstimatoR) is an efficient gradient estimation technique developed for non-convex stochastic optimization. Although having been shown to attain nearly optimal computational complexity bounds, the SPIDER-type methods are limited to linear metric spaces.  In this paper, we introduce the Riemannian SPIDER (R-SPIDER) method as a novel nonlinear-metric extension of SPIDER for efficient non-convex optimization on Riemannian manifolds. We prove that for finite-sum problems with $n$ components, R-SPIDER converges to an $\epsilon$-accuracy stationary point within $\mathcal{O}\big(\min\big(n+\frac{\sqrt{n}}{\epsilon^2},\frac{1}{\epsilon^3}\big)\big)$ stochastic gradient evaluations, which is sharper in magnitude than the prior Riemannian first-order methods. For online optimization, R-SPIDER is shown to converge with $\mathcal{O}\big(\frac{1}{\epsilon^3}\big)$ complexity which is, to the best of our knowledge, the first non-asymptotic result for online Riemannian optimization. Especially, for gradient dominated functions, we further develop a variant of R-SPIDER and prove its linear convergence rate. Numerical results demonstrate the computational efficiency of the proposed methods.
\end{abstract}

\section{Introduction}
We consider the following \emph{finite-sum} and \emph{online} non-convex problems on a Riemannian manifold $\M$:
\begin{equation}\label{generalproblem}
\min_{\xm\in\M} f(\xm):=
\begin{cases}
\frac{1}{n} \sum_{i=1}^{n} f_i(\xm) \!\!\!\!&\text{(finite-sum)} \\
 \EE [f(\xm;\pi)] \!\!\!\!&\text{(online)}
\end{cases},
\end{equation}
where $f: \M \mapsto \mathbb{R}$ is a smooth non-convex loss function. For the finite-sum problem, each individual loss $f_{i}(\xm)$ is associated with the $i$-th sample, while in online setting, the stochastic component $f(\xm;\pi)$ is indexed by a random variable $\pi$. Such a formulation encapsulates several important finite-sum problems and their corresponding online counterparts, including principle component analysis (PCA)~\cite{wold1987principal}, low-rank matrix/tensor completion/recovery~\cite{tan2014riemannian,vandereycken2013low,mishra2014r3mc,kasai2016low}, dictionary learning~\cite{cherian2017riemannian,sun2017complete}, Gaussian mixture models~\cite{hosseini2015matrix} and low-rank multivariate  regression~\cite{meyer2011linear}, to name a few.

\begin{table*}[tp]
		\begin{threeparttable}[b]
			\caption{Comparison of IFO complexity for different Riemannian first-order stochastic optimization  algorithms on the noncovnex problem~\eqref{generalproblem} under finite-sum and online settings. The $\epsilon$-accuracy solution is measured by the
expected gradient norm $\EE\left[\|\nabla f(\xm)\|\right]\leq \epsilon$. Here $L$, $\sigma$ and $\zeta$ respectively denote the gradient Lipschitz constant, the gradient variance and the curvature parameter of the Riemannian manifold (see Section~\ref{Preliminaries}).}
			\setlength{\tabcolsep}{20pt} 
\label{comparisontable}
\begin{center}
{ \footnotesize {
\begin{tabularx}{\textwidth}{c|c|cc}\toprule
							&\multirow{2}{*}{}&\multicolumn{2}{c}{ Non-convex Problem}  \\
							&\multirow{2}{*}{}&\multicolumn{1}{c}{general non-convex }&\multicolumn{1}{c}{ $\tau$-gradient dominated} \\
							\midrule
\multirow{5}{*}{Finite-sum} &							R-SRG~\cite{kasai2018riemannian}& $\Oc{n+\frac{L^2}{\epsilon^4}}$&   $\Oc{(n+\tau^2L^2)\log\lb\frac{1}{\epsilon}\rb}$\\

&R-SVRG~\cite{zhang2016riemannian}& $\mathcal{O}\Big(n+\frac{\zeta n^{\frac{2}{3}}}{\epsilon^2}\Big)$& $\Oc{(n+\tau L \zeta^{\frac{1}{2}}n^{\frac{2}{3}})\log\lb\frac{1}{\epsilon}\rb}$ \\
						
& this work& $\mathcal{O}\left(\min\left(n+\frac{L  \sqrt{n}}{\epsilon^2},\frac{L\sigma}{\epsilon^3}\right)\right)$ &  $\mathcal{O}\left( \min\left(\left(n+ \tau L \sqrt{n}\right)\log\left(\frac{1}{\epsilon} \right),\frac{\tau L \sigma}{\epsilon}\right) \right)$  \\
							\midrule
						
	\multirow{1}{*}{Online}&				this work& $\Oc{ \frac{ L\sigma}{\epsilon^3} }$&  $\Oc{ \frac{\tau L\sigma}{\epsilon} }$ \\
							\bottomrule
						\end{tabularx}
				}}
			\end{center}
		\end{threeparttable}
\end{table*}

One classic approach for solving problem~\eqref{generalproblem} (or its convex counterpart) is to take it as a constrained optimization problem in ambient Euclidean space and find the minimizers via projected (stochastic) gradient descent~\cite{oja1992principal,da1998geodesic,badeau2005fast}. This kind of methods, however, tend to suffer from high computational cost as projection onto certain manifolds (e.g., positive-definite matrices) could be expensive in large-scale learning problems~\cite{zhang2016riemannian}.

As an appealing alternative, the Riemannian optimization methods have recently gained wide attention in machine learning~\citep{zhang2018estimate,zhang2016first,bonnabel2013stochastic,kasai2018riemannian, zhang2016riemannian,kasai2016riemannian,kasai2018riemanniana}. In contrast to the Euclidean-projection based methods, the Riemannian methods directly move the iteration along the geodesic path towards the optimal solution, and thus can better respect the geometric structure of the problem in hand. Specifically, the Riemannian gradient methods have the following recursive form:
\begin{equation}\label{updatingform}
\xmi{k+1}=\Exp{\xmi{k}}{-\eta_{k} \gmi{k}},
\end{equation}
where $\gmi{k}$ is the gradient estimate of the full Riemannian gradient $\nabla f(\xmi{k})$, $\eta_k$ denotes the learning rate, and the exponential mapping $\Exp{\xm}{\ym}$, as defined in Section~\ref{Preliminaries}, maps $\ym$ in the tangent space at $\xm$ to $\Exp{\xm}{\ym}$ on the manifold $\M$ along a proper geodesic curve. For instance, Riemannian gradient descent (R-GD) uses the full Riemannian gradient $\gmi{k}=\nabla f(\xmi{k})$ in Eqn.~\eqref{updatingform} and has been shown to have sublinear rate of convergence in geodesically convex problems~\cite{zhang2016first}. To boost efficiency, Liu\et\cite{liu2017accelerated} and Zhang\et\cite{zhang2018estimate} further introduced the Nesterov acceleration techniques~\cite{nesterov2013introductory} into R-GD with convergence rate significantly improved for geodesically convex functions.

To avoid the time-consuming full gradient computation required in R-GD, Riemannian stochastic optimization algorithms~\cite{bonnabel2013stochastic,kasai2016riemannian, zhang2016riemannian,kasai2018riemanniana,kasai2018riemannian} leverage the decomposable (finite-sum) structure of problem~\eqref{generalproblem}. For instance, Bonnabel\et\cite{bonnabel2013stochastic} proposed R-SGD that  only evaluates gradient of one (or a mini-batch) randomly selected sample for variable update per iteration. Though with good iteration efficiency, R-SGD converges  slowly  as it uses decaying learning rate for convergence guarantee due to its gradient variance. To tackle this issue, Riemannian stochastic variance-reduced gradient (R-SVRG) algorithms~\cite{zhang2016riemannian,kasai2018riemanniana} adapt SVRG~\cite{SVRG} to problem~\eqref{generalproblem}. Benefiting from the variance-reduced technique, R-SVRG  converges more stably and efficiently than R-SGD. More recently, inspired by the variance-reduced stochastic recursive gradient approach~\cite{nguyen2017sarah,nguyen2017stochastic}, the Riemannian stochastic recursive gradient (R-SRG) algorithm~\cite{kasai2018riemannian} establishes a recursive equation to estimate the full Riemannian gradient so that the computational efficiency can be further improved (see Table~\ref{comparisontable}).

SPIDER (Stochastic Path Integrated Differential EstimatoR)~\cite{fang2018spider} is a recursive estimation method developed for tracking the history full gradients with
significantly reduced computational cost. By combining SPIDER with normalized gradient methods, nearly optimal iteration complexity bounds can be attained for non-convex optimization in Euclidean space~\cite{fang2018spider}. Though appealing in vector space problems, it has not been explored for non-convex optimization in nonlinear metric spaces such as Riemannian manifold.

In this paper, we introduce the \textit{Riemannian Stochastic Path Integrated Differential EstimatoR} (R-SPIDER) as a simple yet efficient extension of the SPIDER from Euclidean space to Riemannian manifolds. Specifically, for a proper positive integer $p$, at each time instance $k$ with $\mod(k,p)\equiv0$, R-SPIDER first samples a large data batch $\SSm_1$ and estimates the initial full Riemannian gradient $\nabla f(\xmi{k})$ as $\vmti{k}=\nabla f_{\SSm_1}(\xmi{k})=\frac{1}{|\SSm_1|} \sum_{i\in\SSm_1} f_i(\xmi{k})$. Then at each of the next $p-1$ iterations, it samples a smaller mini-batch $\SSm_2$ and estimates/tracks $\nabla f(\xmi{k})$:
\begin{equation}\label{upadtetionform}
\vmti{k}= \fsi{2}(\xmi{k})-\Para{\xmi{k-1}}{\xmi{k}}{\fsi{2}(\xmi{k-1})-\vmti{k-1}},
\end{equation}
where the parallel transport $\Para{\xm}{\zm}{\ym}$ (as defined in Section~\ref{Preliminaries}) transports $\ym$ from the tangent space at $\xm$ to that at the point $\zm$. Here the parallel transport operation is necessary since $\fsi{2}(\xmi{k-1})$ and $\fsi{2}(\xmi{k})$ are  located in different tangent spaces. 
Given the gradient estimate $\vmti{k}$, the variable is updated via normalized gradient descent $
\xmi{k+1}= \mbox{Exp}_{\xmi{k}}\big(-\etai{k} \frac{\vmti{k}}{\|\vmti{k}\|}\big).
$
Note that R-SRG~\cite{kasai2018riemannian} applies a similar recursion form as in~\eqref{upadtetionform} for full Riemannian gradient estimation, and the core difference between their method and ours lies in that R-SPIDER is equipped with gradient normalization which is missing in R-SRG. Then by carefully setting the learning rate $\eta$ and mini-batch sizes of $\SSm_1$ and $\SSm_2$, R-SPIDER only needs to sample a necessary number of data points for accurately estimating Riemannian gradient and sufficiently decreasing the objective at each iteration. In this way, R-SPIDER achieves sharper bounds of  incremental first order oracle (IFO, see Definition~\ref{def:IFO}) complexity than R-SRG and other state-of-the-art Riemannian non-convex optimization methods.

Table~\ref{comparisontable} summarizes our main results on the computational complexity of R-SPIDER for non-convex problems, along with those for the above mentioned Riemannian gradient algorithms. The following are some highlighted advantages of our results over the state-of-the-arts.

For the finite-sum setting of problem~\eqref{generalproblem} with general non-convex functions, the IFO complexity of R-SPIDER to achieve $\EE\left[\|\nabla f(\xm)\|\right]\leq \epsilon$ is $\mathcal{O}\big(\min\big(n+\frac{L  \sqrt{n}}{\epsilon^2},\frac{L\sigma}{\epsilon^3}\big)\big)$ which matches the lower IFO complexity bound in Euclidean space~\cite{fang2018spider}. By comparison, the IFO complexity bounds of R-SRG and R-SVRG are $\mathcal{O}\big(n+\frac{L^2}{\epsilon^4}\big)$ and $\mathcal{O}\big(n+\frac{\zeta n^{\frac{2}{3}}}{\epsilon^2}\big)$, respectively. It can be verified that R-SPIDER improves over R-SRG by a factor of $\mathcal{O}\big(\frac{1}{\epsilon}\big)$ and R-SVRG by a factor $\mathcal{O}\big(n^{1/6}\big)$ regardless of the relation between $n$ and $\epsilon$.

When $f(\xm)$ is a $\tau$-gradient dominated function with finite-sum structure, R-SPIDER enjoys the IFO complexity of $\mathcal{O}\big( \min\big(\big(n+ \tau L \sqrt{n}\big)\log\big(\frac{1}{\epsilon} \big),\frac{\tau L \sigma}{\epsilon}\big) \big)$ which is again lower than the $\mathcal{O}\big( \big(n+\tau L\zeta^{\frac{1}{2}}n^{\frac{2}{3}}\big) \log\big(\frac{1}{\epsilon}\big)\big)$ bound for R-SVRG by a factor of $\Oc{n^{1/6}}$. Note that our IFO complexity is not dependent on the curvature parameter $\zeta (\geq 1)$ of the manifold $\M$, because our analysis does not involve the geodesic trigonometry inequality on a manifold. To compare with R-SRG with complexity bound $\mathcal{O}\big( \big(n+\tau^2 L^2\big) \log\big(\frac{1}{\epsilon}\big)\big)$, R-SPIDER is more efficient than R-SRG in large-sample-moderate-accuracy settings, e.g., in cases when $n$ dominates $1/\epsilon$.

For the online version of problem~~\eqref{generalproblem}, we  establish the IFO complexity bounds $\mathcal{O}\big(\frac{L\sigma}{\epsilon^3}\big)$ and $\Oc{ \frac{\tau L\sigma}{\epsilon}  }$ for generic non-convex and gradient dominated problems, respectively. To our best knowledge, these non-asymptotic convergence results are novel to non-convex online Riemannian optimization. Comparatively, Bonnabel\et\cite{bonnabel2013stochastic} only provided asymptotic convergence analysis of R-SGD: the iterating sequence generated by R-SGD converges to a critical point when the iteration number approaches infinity.

Finally, our analysis reveals as a byproduct that R-SPIDER provably benefits from mini-batching. Specifically, our theoretic results imply linear speedups in parallel computing setting for large mini-batch sizes. We are not aware of any similar linear speedup results in the prior Riemannian stochastic  algorithms.

\section{Preliminaries}\label{Preliminaries}
Throughout this paper, we assume that the Riemannian manifold $(\M,\g)$ is a real smooth manifold $\M$ equipped with a Riemannian metric $\g$. We denote the induced inner product $\la \ym, \zm\ra$ of any two vectors $\ym$ and  $\zm$ in the tangent space  $\Tgi{\xm}\M$ at the point $\xm$ as $\la \ym, \zm\ra=\g(\ym,\zm)$, and denote the norm $\|\ym\|$ as $\|\ym\|=\sqrt{\g(\ym,\ym)}$. Let $\nabla f_i(\xm)$ be the stochastic Riemannian gradient of  $f_i(\xm)$ and also be a unbiased estimate to the full Riemannian gradient $\nabla f(\xm)$, i.e. $\EE_i [\nabla f_i(\xm)]=\nabla f(\xm)$.

The exponential mapping $\Exp{\xm}{\ym}$ maps $\ym\in\Tgi{\xm}\M$ to $\zm\in \M$ such that there is a geodesic $\gamma(t)$ with $\gamma(0)=\xm$, $\gamma(1)=\zm$ and $\dot{\gamma}(0)=\frac{{d}}{{d}t} \gamma(t)=\ym$. Here the geodesic $\gamma(t)$ is a constant speed curve $\gamma:[0,1]\rightarrow \M$ which is locally distance minimized. If there exists a unique geodesic between any two points on $\M$, then the exponential map has an inverse mapping $\iExpp_{\xm}: \M\rightarrow \Tgi{\xm}\M$ and the geodesic is the unique shortest path with the geodesic distance $\dis{\xm}{\zm}=\|\iExp{\xm}{\zm}\|=\|\iExp{\zm}{\xm}\|$  between $\xm,\zm \in \M$.

To utilize the historical and current Riemannian gradients, we need to transport the historical gradients into the tangent space of the current point such that these gradients can be linearly combined in one tangent space. For this purpose, we need to define the parallel transport operator $\Paras{\xm}{\zm}: \Tgi{\xm}\M\rightarrow \Tgi{\zm}\M$ which maps $\ym\in\Tgi{\xm}\M$ to $\Paras{\xm}{\zm}(\ym)\in \Tgi{\zm}\M$ while preserving the inner product and norm, i.e., $\la \ym_1,\ym_2\ra=\la \Paras{\xm}{\zm}(\ym_1), \Paras{\xm}{\zm} (\ym_2)\ra$ and $\|\ym\|=\|\Paras{\xm}{\zm}(\ym)\|$ for $\forall \ym_1,\ym_2,\ym\in\Tgi{\xm}\M$.

We impose on the loss components $f_i(\xm)$ the assumption of geodesic gradient-Lipschitz-smoothness. Such a smoothness condition is conventionally assumed in analyzing Riemannian gradient algorithms~\cite{huang2015riemannian,huang2015broyden,kasai2018riemannian, zhang2016riemannian}.

\begin{assum}[Geodesically $L$-gradient-Lipschitz]\label{GL}
Each loss $f_i(\xm)$ is geodesically $L$-gradient Lipschitz such that $\EE_i \|\nabla f_i(\xm) -\Para{\ym}{\xm}{\nabla f_i(\ym)}\|^2 \leq L^2 \|\iExp{\xm}{\ym}\|^2$.
\end{assum}
It can be shown that if each $f_i(\xm)$ is geodesically $L$-gradient-Lipschitz, then for any $\xm,\ym\in\M$,
 		\begin{equation*}
		\begin{split}
		f(\ym)\leq f(\xm)+\la\nabla f(\xm),\iExp{\xm}{\ym} \ra +\frac{L}{2}\|\iExp{\xm}{\ym} \|^2.
		\end{split}
		\end{equation*}

We also need to impose the following boundness assumption on the variance of stochastic gradient.
\begin{assum}[Bounded Stochastic Gradient Variance]\label{boundedgradient} For any $\xm\in\M$ , the gradient variance of each loss $f_i(\xm)$ is bounded as $\EE_i\|\nabla f_i(\xm) -\nabla f(\xm)\|^2 \leq \sigma^2 .$
\end{assum}

We further introduce the following concept of $\tau$-gradient dominated function~\cite{polyak1963gradient,nesterov2006cubic} which will also be investigated in this paper.
\begin{defn}[$\tau$-Gradient Dominated Functions]\label{def:gdf}
$f(\xm)$ is said to be a $\tau$-gradient dominated function if it satisfies $f(\xm)-f(\xmi{*})\leq \tau \|\nabla f(\xm)\|^2$ for any $\xm\in \M$, where $\tau$ is a universal constant and $\xmi{*}=\argmin_{\xm\in\M} f(\xm)$ is the global minimizer of $f(\xm)$ on the manifold $\M$.
\end{defn}

The following defined incremental first order oracle (IFO) complexity is usually adopted as the computational complexity measurement for evaluating stochastic optimization algorithms~\cite{kasai2018riemannian, zhang2016riemannian,kasai2016riemannian,kasai2018riemanniana}.
\begin{defn}[IFO Complexity]\label{def:IFO}
  For $f(\xm)$ in problem~\eqref{generalproblem}, an IFO takes in an index $i \in [n]$ and a point $\xm$, and returns the pair $(f_i(\xm),\nabla f_i(\xm))$.
\end{defn}

\section{Riemannian SPIDER Algorithm}\label{secconvergence}
We first elaborate on the Riemannian SPIDER algorithm, and then analyze its convergence performance for general non-convex problems. For gradient dominated problems, we further develop a variant of R-SPIDER with a linear rate of convergence.

\subsection{Algorithm}\label{proposedalgorithm}
The R-SPIDER method is outlined in Algorithm~\ref{algmanifold}. At its core, R-SPIDER customizes SPIDER to recursively estimate/track the full Riemannian gradient in a computationally economic way. For each cycle of $p$ iterations, R-SPIDER first samples a large data batch $\SSm_1$ by with-replacement sampling and views the gradient estimate $\vmti{k} = \nabla f_{\SSm_1}(\xmi{k})=\frac{1}{|\SSm_1|} \sum_{i\in\SSm_1} f_i(\xmi{k})$ as the snapshot gradient. For the next forthcoming $p-1$ iterations, R-SPIDER only samples a smaller mini-batch $\SSm_2$ and estimates the full Riemannian gradient $\nabla f(\xmi{k})$ as $\vmti{k}=   \fsi{2}(\xmi{k})-\Para{\xmi{k-1}}{\xmi{k}}{\fsi{2}(\xmi{k-1})-\vmti{k-1}}$. Here the parallel transport operator $\Para{\xmi{k-1}}{\xmi{k}}{\cdot}$ is applied to ensure that the Riemannian gradients can be linearly combined in a common tangent space. If $\|\vmti{k}\|>0.5\epsilon$, then R-SPIDER performs normalized gradient descent to update $\xmi{k+1}= \mbox{Exp}_{\xmi{k}}\big(-\etai{k} \frac{\vmti{k}}{\|\vmti{k}\|}\big)$. Otherwise, the algorithm terminates and returns $\xmi{k}$.

The idea of recursive Riemannian gradient estimation has also been exploited by R-SRG~\citep{kasai2018riemannian}. Although sharing a similar spirit in full gradient approximation, R-SPIDER departs notably from R-SRG: at each iteration, R-SPIDER normalizes the gradient $\vmti{k}$ and thus is able to well control the distance $\dis{\xmi{k}}{\xmi{k+1}}$ between $\xmi{k}$ and $\xmi{k+1}$ by properly controlling the stepsize $\eta$, while R-SRG directly updates the variable without gradient normalization. It turns out that this normalization step is key to achieving  faster convergence speed for non-convex problem in R-SPIDER, since it helps reduce the variance of stochastic
gradient estimation by properly controlling the distance $\dis{\xmi{k}}{\xmi{k+1}}$ (see Lemma~\ref{lemma1}). As a consequence, at each iteration, R-SPIDER only needs to sample a necessary number of data points to estimate Riemannian gradient and decrease the objective sufficiently (see Theorems~\ref{objectivefinite} and~\ref{objective}). In this way,   R-SPIDER achieves lower overall computational complexity for solving  problem~\eqref{generalproblem}. 

\subsection{Computational complexity analysis}\label{convegrenceanalysis}
The vanilla SPIDER is known to achieve nearly optimal iteration complexity bounds for stochastic non-convex optimization in Euclidean space~\cite{fang2018spider}. We here show that R-SPIDER generalizes such an appealing property of SPIDER to Riemannian manifolds. We first present the following key lemma which guarantees sufficiently accurate Riemannian gradient estimation for R-SPIDER. We denote $\mathbb{I}_{\{\Es\}}$ as the indicator function: if the event $\Es$ is true, then $\mathbb{I}_{\{\Es\}}=1$; otherwise, $\mathbb{I}_{\{\Es\}}=0$.

\begin{lem}[Bounded Gradient Estimation Error]\label{lemma1}Suppose Assumptions~\ref{GL} and~\ref{boundedgradient} hold. Let $k_0=\lfloor k/p\rfloor $ and $\widetilde{k}_0=k_0 p$. The estimation error between the full Riemannian gradient $\nabla f(\xmi{k})$ and its estimate $\vmti{k}$ in Algorithm~\ref{algmanifold} is bounded as
		\begin{equation*}
\begin{split}
&\EE \left[\|\vmti{k}-\nabla f(\xmi{k})\|^2\ |\ \xmi{\widetilde{k}_0},\cdots,\xmi{\widetilde{k}_0+p-1} \right]\\
\leq\ & \mathbb{I}_{\left\{|\SSm_1|<n\right\}} \frac{\sigma^2}{|\SSm_1|} +  \frac{L^2}{|\SSm_2|}\sum_{i=\widetilde{k}_0}^{\widetilde{k}_0+p-1}\diss{\xmi{i}}{\xmi{i+1}},
\end{split}
	\end{equation*}
where $\dis{\xmi{i}}{\xmi{i+1}}$ is the distance between $\xmi{i}$ and $\xmi{i+1}$.
\end{lem}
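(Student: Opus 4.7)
The plan is to mimic the classical SPIDER variance recursion, but carried out inside tangent spaces linked by parallel transport. Because $\Parass_{\xmi{k-1}}^{\xmi{k}}$ is a linear isometry, the Euclidean bookkeeping used in the vanilla SPIDER analysis carries over once every vector is transported into the common tangent space $\Tgi{\xmi{k}}\M$.

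First I would introduce the error vector $\ee_k := \vmti{k}-\nabla f(\xmi{k}) \in \Tgi{\xmi{k}}\M$ and rewrite the recursive update of R-SPIDER as
\[
\ee_k \;=\; D_k \;+\; \Para{\xmi{k-1}}{\xmi{k}}{\ee_{k-1}},
\]
where the stochastic fluctuation is
\[
D_k \;:=\; \bigl[\fsi{2}(\xmi{k}) - \Para{\xmi{k-1}}{\xmi{k}}{\fsi{2}(\xmi{k-1})}\bigr]
 -\bigl[\nabla f(\xmi{k}) - \Para{\xmi{k-1}}{\xmi{k}}{\nabla f(\xmi{k-1})}\bigr].
\]
This identity follows by substituting the update rule for $\vmti{k}$ and adding/subtracting $\Para{\xmi{k-1}}{\xmi{k}}{\nabla f(\xmi{k-1})}$.

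Next I would expand $\|\ee_k\|^2$ using the decomposition and take conditional expectation with respect to the past iterates together with the history of the mini-batches. The three ingredients are: (i) parallel transport is an isometry, so $\|\Para{\xmi{k-1}}{\xmi{k}}{\ee_{k-1}}\|^2=\|\ee_{k-1}\|^2$; (ii) the sample $\SSm_2$ drawn at step $k$ is independent of everything up to and including $\xmi{k}$ and satisfies $\EE_{\SSm_2}[\fsi{2}(\xmi{k})] = \nabla f(\xmi{k})$, from which $\EE[D_k \mid \xmi{k-1},\xmi{k}]=\mathbf{0}$; hence the cross term $\la D_k,\Para{\xmi{k-1}}{\xmi{k}}{\ee_{k-1}}\ra$ vanishes in expectation; (iii) since $D_k$ is an average over $|\SSm_2|$ independent zero-mean terms, its conditional second moment equals $\frac{1}{|\SSm_2|}$ times the per-sample variance, and Assumption~\ref{GL} together with the isometry property bounds the latter by $L^2\diss{\xmi{k-1}}{\xmi{k}}$. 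This yields the one-step recursion
\[
\EE\bigl[\|\ee_k\|^2 \,\big|\, \xmi{\widetilde{k}_0},\ldots,\xmi{k}\bigr] \;\leq\; \EE\bigl[\|\ee_{k-1}\|^2\bigr] \;+\; \frac{L^2}{|\SSm_2|}\diss{\xmi{k-1}}{\xmi{k}}.
\]

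For the base case at $k=\widetilde{k}_0$ we have $\vmti{\widetilde{k}_0}=\nabla f_{\SSm_1}(\xmi{\widetilde{k}_0})$. When $|\SSm_1|<n$, with-replacement sampling plus Assumption~\ref{boundedgradient} gives $\EE\|\ee_{\widetilde{k}_0}\|^2\leq \sigma^2/|\SSm_1|$; when $|\SSm_1|=n$ the estimator equals the full gradient and the error is $0$, which is exactly the role of the indicator $\mathbb{I}_{\{|\SSm_1|<n\}}$. Unrolling the one-step recursion from $\widetilde{k}_0$ up to $k$, and extending the summation over the whole cycle $\widetilde{k}_0,\ldots,\widetilde{k}_0+p-1$ (which only enlarges the bound), delivers the claimed inequality.

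The main obstacle is ensuring that the decomposition of $\ee_k$ really is the ``right'' one in the manifold setting, i.e., that the cross term vanishes even though the two summands live in different tangent spaces than their predecessors. The rescue is that \emph{every} object in the decomposition lies in the single tangent space $\Tgi{\xmi{k}}\M$ after parallel transport, so both the inner-product cancellation and the $\|\cdot\|$-preservation step are legitimate; after that the argument is essentially the Euclidean SPIDER variance recursion applied inside one tangent space at a time. No curvature-dependent quantity ever enters, which is the reason the final bound is free of the curvature parameter $\zeta$.
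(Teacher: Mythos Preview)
Your argument is correct and is essentially the same martingale/variance recursion that underlies the paper's proof: zero-mean increment $D_k$, cross term killed by unbiasedness of the fresh mini-batch, isometry of parallel transport to carry $\|\ee_{k-1}\|^2$ forward, and Assumption~\ref{GL} to bound the per-sample second moment by $L^2\diss{\xmi{k-1}}{\xmi{k}}$. The base case and the role of the indicator are handled exactly as the paper does.

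The only notable difference is in the bookkeeping. The paper introduces an auxiliary sequence $\widehat{\vm}_t$ obtained by parallel-transporting \emph{every} term to the single tangent space $\Tgi{\xmi{k}}\M$, so that the entire recursion lives in one fixed linear space and the Euclidean SPIDER variance lemma can be invoked verbatim. You instead keep $\ee_k$ in its ``native'' tangent space $\Tgi{\xmi{k}}\M$ and use the isometry of a \emph{single} transport $\Paras{\xmi{k-1}}{\xmi{k}}$ at each step of the unrolling. Your route is a bit more direct: it avoids the auxiliary construction and, more importantly, it never implicitly uses a composition identity of the form $\Paras{\xmi{t}}{\xmi{k}}=\Paras{\xmi{t-1}}{\xmi{k}}\circ\Paras{\xmi{t}}{\xmi{t-1}}$, which the paper's auxiliary-sequence computation leans on. The paper's presentation, on the other hand, makes the connection to the original Euclidean SPIDER lemma (their Lemmas~\ref{lemma2}--\ref{lemma3}) explicit, which is expository value. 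Either route yields the stated bound and, as you observe, keeps the curvature parameter $\zeta$ out of the estimate.
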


{
	\begin{algorithm}[t]
		\caption{{R-SPIDER ($\xmi{0}$, $\epsilon$, $\eta$, $p$, $|\SSm_1|$, $|\SSm_2|$)}}
		\label{algmanifold}
		\begin{algorithmic}[1]
			\STATE {\bfseries Input:}\,initialization $\xmi{0}$,\,accuracy $\epsilon$,\,learning rate $\eta$, iteration interval $p$, mini-batch sizes $|\SSm_1|$ and $|\SSm_2|$.
			
			\FOR{ $k=0$ to $K-1$ }
			\IF{mod($k,p$)$=0$}
			\STATE Draw mini-batch $\SSm_1$ and compute $\vmti{k}= \nabla f_{\SSm_1}(\xmi{k})$;
			\ELSE
			\STATE Draw mini-batch $\SSm_{2}$  and compute $ \nabla f_{\SSm_{2}}(\xmi{k})$;\\
			\STATE $\vmti{k}=   \fsi{2}(\xmi{k})-\Para{\xmi{k-1}}{\xmi{k}}{\fsi{2}(\xmi{k-1})-\vmti{k-1}}$;
			\ENDIF
\STATE $\xmi{k+1}= \Exp{\xmi{k}}{-\etai{k} \frac{\vmti{k}}{\|\vmti{k}\|}}$;\\
			\ENDFOR
			
			\STATE {\bfseries Output:} $\xms$ which is chosen uniformly at  random from $\{\xmi{k}\}_{k=0}^{K-1}$.
		\end{algorithmic}
	\end{algorithm}
}

\begin{proof}
The key is to carefully handle the exponential mapping and parallel transport operators introduced for vector computation. See details in Appendix~\ref{proofofauxiliarylemma}.
\end{proof}
Lemma~\ref{lemma1} tells that by properly selecting the mini-batch sizes $|\SSm_1|$ and $|\SSm_2|$, the accuracy of gradient estimate $\vmti{k}$ can be controlled. Benefiting from the normalization step, we have $\dis{\xmi{k}}{\xmi{k+1}}=\|\iExp{\xmi{k}}{\xmi{k+1}}\|=\eta$. As a result, the gradient estimation error can be bounded as $\EE \big[\|\vmti{k}-\nabla f(\xmi{k})\|^2\ |\ \xmi{\widetilde{k}_0},\cdots,\xmi{\widetilde{k}_0+p-1} \big]
\leq \mathbb{I}_{\{|\SSm_1|<n\}} \frac{\sigma^2}{|\SSm_1|} +  \frac{pL^2\eta^2}{|\SSm_2|}$. Based on this result,  we are able to analyze the
rate-of-convergence of R-SPIDER.

\textbf{Finite-sum setting.} We first consider problem~\eqref{generalproblem} under finite-sum setting. By properly selecting parameters, we prove that at each iteration, the sequence $\{\xmi{k}\}$ produced by Algorithm~\ref{algmanifold} can lead to sufficient decrease of the objective loss $f(\xm)$  when $\|\vmi{k}\|$ is large. Based on this results, we further derive the iteration number of Algorithm~\ref{algmanifold} for computing an $\epsilon$-accuracy solution. The result is formally summarized in Theorem~\ref{objectivefinite}.

\begin{thm}\label{objectivefinite} Suppose Assumptions~\ref{GL} and~\ref{boundedgradient} hold.
 Let $s\!=\!\min\!\big(n,\frac{ 16\sigma^2}{\epsilon^2}\big)$, $p\!=\!n_0 s^{\frac{1}{2}} $, $\etai{k} = \min\left(\frac{\epsilon}{2Ln_0}, \frac{ \|\vmti{k}\|}{4L n_0}\right)$, $|\SSm_1|\!=\!s$, $|\SSm_2|\!=$ $\frac{4s^{\frac{1}{2}}}{n_0}$ and $n_0\in[1,4s^{\frac{1}{2}}]$. Then for finite-sum problem~\eqref{generalproblem}, the  sequence $\{\xmi{k}\}$ produced by Algorithm~\ref{algmanifold} satisfies
	\begin{equation*}
	\begin{split}
	\EE \left[f(\xmi{k+1}) - f(\xmi{k})\right] \leq -\frac{\epsilon}{64Ln_0} \left( 12\EE [\|\vmti{k}\|]  -7\epsilon\right).
	\end{split}
	\end{equation*}
Moreover, to achieve $\EE[\|\nabla f(\xms)\|]\leq \epsilon$,  Algorithm~\ref{algmanifold} will terminate at most $\left(\frac{14Ln_0\Delta}{\epsilon^2}\right)$ iterations  in expectation, where $\Delta=f(\xmi{0})- f(\xmi{*})$ with $\xmi{*}=\argmin_{\xm\in\M} f(\xm)$.
\end{thm}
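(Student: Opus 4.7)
The plan is to combine the Riemannian $L$-smoothness inequality with the stochastic gradient estimation control from Lemma~\ref{lemma1}.

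First, I would derive a per-iteration descent inequality. From Assumption~\ref{GL}, the $L$-smoothness consequence yields $f(\xmi{k+1})\leq f(\xmi{k})+\la \nabla f(\xmi{k}),\iExp{\xmi{k}}{\xmi{k+1}}\ra+\frac{L}{2}\|\iExp{\xmi{k}}{\xmi{k+1}}\|^{2}$. Substituting the normalized update $\iExp{\xmi{k}}{\xmi{k+1}}=-\etai{k}\vmti{k}/\|\vmti{k}\|$, decomposing $\nabla f(\xmi{k})=\vmti{k}+(\nabla f(\xmi{k})-\vmti{k})$, and applying Cauchy--Schwarz gives
\begin{equation*}
f(\xmi{k+1})-f(\xmi{k}) \leq -\etai{k}\|\vmti{k}\| + \etai{k}\|\vmti{k}-\nabla f(\xmi{k})\| + \tfrac{L}{2}\etai{k}^{2}.
\end{equation*}

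Second, I would bound the gradient estimation error via Lemma~\ref{lemma1}. A crucial observation is that normalization forces $\dis{\xmi{i}}{\xmi{i+1}}=\etai{i}\leq \epsilon/(2Ln_0)$ for every $i$. Using this and the prescribed $|\SSm_1|=s$, $|\SSm_2|=4s^{1/2}/n_0$, $p=n_0 s^{1/2}$, and $s=\min(n,16\sigma^2/\epsilon^2)$, both terms in the Lemma~\ref{lemma1} bound collapse to at most $\epsilon^2/16$ (the second using $p/|\SSm_2|=n_0^2/4$; the first using the indicator and the choice of $s$). Jensen's inequality then yields $\EE[\|\vmti{k}-\nabla f(\xmi{k})\|]\leq \epsilon/(2\sqrt{2})$ after removing the conditioning via the tower rule.

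Third, I would split on the active branch of $\etai{k}=\min(\epsilon/(2Ln_0),\|\vmti{k}\|/(4Ln_0))$ at $\|\vmti{k}\|=2\epsilon$. Substituting each branch into the descent inequality and absorbing the error term using the bound from the previous step, direct algebra verifies that in both regimes the right-hand side is dominated by $-\epsilon(12\|\vmti{k}\|-7\epsilon)/(64Ln_0)$; taking expectation then yields the claimed per-iteration inequality.

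Finally, for the iteration count, telescoping the per-iteration inequality over $k=0,\ldots,K-1$ and using $f(\xmi{0})-f(\xmi{*})=\Delta$ gives $\frac{1}{K}\sum_{k=0}^{K-1}\EE[\|\vmti{k}\|]\leq \frac{7\epsilon}{12}+\Ocs{\frac{Ln_0\Delta}{K\epsilon}}$. Combining this with the triangle inequality $\|\nabla f(\xms)\|\leq\|\vmti{\xms}\|+\|\vmti{\xms}-\nabla f(\xms)\|$ and the uniform choice of $\xms$ transitions to a bound on $\EE[\|\nabla f(\xms)\|]$, and the $\Oc{Ln_0\Delta/\epsilon^2}$ iteration count follows by setting the right-hand side to $\epsilon$. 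The main obstacle I expect is the careful bookkeeping of the case analysis, which must simultaneously handle the two branches of $\etai{k}$ together with the two stochastic error sources (from $\SSm_1$ and from the recursive chain through $\SSm_2$) while producing the exact numerical constants claimed in the statement; correctly unrolling Lemma~\ref{lemma1}'s conditional expectation via the tower rule is a related technical subtlety.
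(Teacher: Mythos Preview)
Your skeleton matches the paper's proof closely: geodesic $L$-smoothness, variance control via Lemma~\ref{lemma1} together with $\dis{\xmi{i}}{\xmi{i+1}}=\etai{i}\le \epsilon/(2Ln_0)$, and a telescoping argument for the iteration count are exactly the ingredients used there. The one substantive difference is how you handle the cross term $-\etati{k}\langle \nabla f(\xmi{k})-\vmti{k},\vmti{k}\rangle$ (with $\etati{k}=\etai{k}/\|\vmti{k}\|$). You apply Cauchy--Schwarz, obtaining the \emph{linear} error term $\etai{k}\|\vmti{k}-\nabla f(\xmi{k})\|$ and then invoke Jensen on the variance bound. The paper instead applies Young's inequality, yielding the \emph{quadratic} term $\tfrac{\etati{k}}{2}\|\vmti{k}-\nabla f(\xmi{k})\|^2$, which pairs directly with the second-moment bound $\EE\|\vmti{k}-\nabla f(\xmi{k})\|^2\le \epsilon^2/8$ to give an error contribution of $\tfrac{1}{8Ln_0}\cdot\tfrac{\epsilon^2}{8}=\tfrac{\epsilon^2}{64Ln_0}$. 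With your route the error contribution is $\tfrac{\epsilon}{2Ln_0}\cdot\tfrac{\epsilon}{2\sqrt{2}}=\tfrac{8\sqrt{2}\,\epsilon^2}{64Ln_0}$, roughly eleven times larger; combined with the optimal constant from the case split (which is $18/7$ when the coefficient of $\EE\|\vmti{k}\|$ is fixed at $12$), you end up with $18/7+8\sqrt{2}\approx 13.9$ in place of $7$. So your claim that ``direct algebra verifies \ldots dominated by $-\epsilon(12\|\vmti{k}\|-7\epsilon)/(64Ln_0)$'' does not go through with Cauchy--Schwarz; it does go through with Young's.

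As a stylistic note, the paper avoids the explicit two-branch case analysis by the one-line observation $\min(2u,u^2)\ge 2u-1$ (equivalently $(u-1)^2\ge 0$) with $u=\|\vmti{k}\|/\epsilon$, which simultaneously covers both regimes of $\etai{k}$. The telescoping and the concluding triangle-inequality step are identical to what you describe, so apart from the constant issue the proposal is correct.
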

\begin{proof}
The result comes readily from geodesically $L$-gradient-Lipschitz of $f$ and the variance bound in Lemma~\ref{lemma1}. See Appendix~\ref{proofofobjectivefinite} for a complete proof.
\end{proof}

Theorem~\ref{objectivefinite} shows that Algorithm~\ref{algmanifold} only needs to run at most $\left(\frac{14Ln_0\Delta}{\epsilon^2}\right)$ iteration to compute an $\epsilon$-accuracy solution $\xms$, \ie~$\EE[\|\nabla f(\xms)\|]\leq \epsilon$. This means the convergence rate of R-SPIDER is at the order of $\mathcal{O}\left(\frac{Ln_0\Delta}{\epsilon^2}\right)$. Besides, by one iteration loop of Algorithm~\ref{algmanifold}, the objective value $f(\xmi{k})$ monotonously decreases in expectation when $\EE[\|\vmi{k}\|]$ is large, \eg~$\EE[\|\vmi{k}\|]\geq \frac{7\epsilon}{12}$. By comparison, Kasai\et\cite{kasai2018riemannian} only proved the sublinear convergence rate of the gradient norm $\EE[\|\nabla f(\xm)\|^2]$ in R-SRG and did not reveal any  convergence behavior of the objective $f(\xm)$. Moreover, Theorem~\ref{objectivefinite} yields as a byproduct the benefits of mini-batching to R-SPIDER. Indeed, by controlling the parameter $n_0$ in R-SPIDER, the mini-batch size $|\SSm_2|$ at each iteration can range from 1 to $\min\big(4\sqrt{n},\frac{ 16\sigma}{\epsilon}\big)$. Also, it can be seen from Theorem~\ref{objectivefinite} that larger mini-batch size allows more aggressive step size $\etai{k}$ and thus leads to less necessary iterations to achieve an $\epsilon$-accuracy solution. More specifically, the convergence rate bound $\mathcal{O}\left(\frac{Ln_0\Delta}{\epsilon^2}\right)$ indicates that at least in theory, increasing the mini-batch sizes in R-SPIDER provides linear speedups in parallel computing environment. In contrast, these important benefits of mini-batching are not explicitly analyzed in the existing Riemannian stochastic gradient algorithms~\cite{kasai2018riemannian,zhang2016riemannian}.

Based on Theorem~\ref{objectivefinite}, we can derive the IFO complexity of R-SPIDER for non-convex problems in Corollary~\ref{complexityfinite}.
\begin{cor}\label{complexityfinite}
Using the same assumptions and parameters in Theorem~\ref{objectivefinite}, the IFO complexity of Algorithm~\ref{algmanifold} is $\mathcal{O}\big(\min\big(n+\frac{L  \Delta\sqrt{n}}{\epsilon^2},\frac{L\Delta\sigma}{\epsilon^3}\big)\big)$ for achieving $\EE[\|\nabla f(\xms)\|]\leq \epsilon$.
\end{cor}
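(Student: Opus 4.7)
The plan is to translate the iteration upper bound from Theorem~\ref{objectivefinite} into an IFO bound by carefully accounting for the per-cycle sampling cost of Algorithm~\ref{algmanifold} and summing over all cycles. First I would compute the IFO cost of a single outer cycle of length $p$. Each cycle consists of one snapshot step querying $|\SSm_1|=s$ samples, followed by $p-1$ recursive steps each querying $O(|\SSm_2|)=O(s^{1/2}/n_0)$ samples (absorbing the factor of two for the two $\fsi{2}$ evaluations at $\xmi{k}$ and $\xmi{k-1}$ into the constant). With $p=n_0 s^{1/2}$, this per-cycle cost telescopes to $s+(p-1)\cdot O(s^{1/2}/n_0)=O(s)$.

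Next I would combine this per-cycle count with the iteration upper bound $K\le 14Ln_0\Delta/\epsilon^2$ from Theorem~\ref{objectivefinite}. Since there are at most $\lceil K/p\rceil$ completed cycles plus possibly one partial cycle that still opens with a snapshot, the overall IFO cost is bounded by
\begin{equation*}
\Bigl\lceil\frac{K}{p}\Bigr\rceil\cdot O(s)+O(s)=O\!\left(\frac{L\Delta\sqrt{s}}{\epsilon^2}+s\right),
\end{equation*}
after substituting $p=n_0 s^{1/2}$ and noticing that $n_0$ cancels out of the leading term. This cancellation is consistent with, and explains, the linear-speedup claim for mini-batching discussed in Section~\ref{convegrenceanalysis}: enlarging $n_0$ reduces the number of iterations while proportionally increasing the mini-batch size, leaving the total IFO cost unchanged.

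Finally I would substitute the two admissible values of $s=\min(n,16\sigma^2/\epsilon^2)$. For $s=n$ (the regime $n\le 16\sigma^2/\epsilon^2$), the bound becomes $\mathcal{O}(n+L\Delta\sqrt{n}/\epsilon^2)$. For $s=16\sigma^2/\epsilon^2$, it becomes $\mathcal{O}(\sigma^2/\epsilon^2+L\Delta\sigma/\epsilon^3)$, and the additive $\sigma^2/\epsilon^2$ is absorbed into the dominant $L\Delta\sigma/\epsilon^3$ in the non-trivial regime $\epsilon\sigma\lesssim L\Delta$ (which must hold whenever at least one outer cycle runs, since otherwise $K<1$ and the bound is vacuous). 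Taking the smaller of the two expressions, corresponding to the $\min$ in the choice of $s$, produces the claimed $\mathcal{O}\big(\min\big(n+L\Delta\sqrt{n}/\epsilon^2,\,L\Delta\sigma/\epsilon^3\big)\big)$ bound.

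The proof is essentially bookkeeping and contains no deep new obstacle beyond Theorem~\ref{objectivefinite}; the only subtlety is ensuring that an incomplete final cycle contributes at most one extra snapshot cost of $O(s)$, which is already absorbed in the additive $s$ term above, so that the two admissible regimes can be combined cleanly under a single $\min$.
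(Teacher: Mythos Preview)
Your proposal is correct and follows essentially the same route as the paper: compute the IFO cost as $\mathcal{O}\big(\tfrac{K}{p}|\SSm_1|+K|\SSm_2|\big)$ with $K=\tfrac{14Ln_0\Delta}{\epsilon^2}$, $p=n_0 s^{1/2}$, $|\SSm_1|=s$, $|\SSm_2|=O(s^{1/2}/n_0)$, simplify to $\mathcal{O}\big(s+\tfrac{L\Delta\sqrt{s}}{\epsilon^2}\big)$, and then substitute the two values of $s$. You are in fact more careful than the paper in tracking the additive $O(s)$ from the ceiling and the possible partial final cycle, and in justifying why the extra $\sigma^2/\epsilon^2$ term is absorbed; the paper simply writes the big-$\mathcal{O}$ expression and jumps to the result.
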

\begin{proof}
The result is obtained directly from a cumulation of IFOs at each step of iteration. See Appendix~\ref{proofofcomplexityfinite}.
\end{proof}

From Corollary~\ref{complexityfinite}, the IFO complexity of R-SPIDER for non-convex finite-sum problems is at the order of $\Oc{\frac{1}{\epsilon^2}\min\left(\sqrt{n},\frac{1}{\epsilon}\right)}$. This result matches the state-of-the-art complexity bounds for general non-convex optimization problems in Euclidean space~\cite{fang2018spider,zhou2018stochastic}. Indeed, under Assumption~\ref{GL}, Fang\et\cite{fang2018spider} proved that the lower IFO complexity bound for finite-sum problem~\eqref{generalproblem} in Euclidean space is $\mathcal{O}\big(n+\frac{L\Delta\sqrt{n}}{\epsilon^2}\big)$ when the number $n$ of the component function obeys $n\leq \mathcal{O}\big(\frac{L^2\Delta^2}{\epsilon^4}\big)$. In the sense that Euclidean space is a special case of Riemannian manifold, our IFO complexity $\mathcal{O}\big(n+\frac{L\Delta\sqrt{n}}{\epsilon^2}\big)$  for finite-sum problem~\eqref{generalproblem} under Assumption~\ref{GL} is nearly optimal. If Assumption~\ref{boundedgradient} holds in addition, we can establish tighter IFO complexity  $\Oc{\frac{1}{\epsilon^2}\min\left(\sqrt{n},\frac{1}{\epsilon}\right)}$. This is because when the sample number $n$ satisfies
 $n\geq \frac{16\sigma^2}{\epsilon^2}$, by sampling $|\SSm_1|= \frac{16\sigma^2}{\epsilon^2} $ and $|\SSm_2|=\frac{16\sigma}{n_0\epsilon}$, the gradient estimation error already satisfies $\EE  [\|\vmti{k}-\nabla f(\xmi{k})\|^2]\leq \frac{\epsilon^2}{8}$. Accordingly, if $\frac{1}{K}\sum_{k=0}^{K-1}\EE\|\vmti{k}\|\leq 0.5\epsilon$ which is actually achieved after $K$ iterations, then $\EE [\|f(\xms)\|]=  \frac{1}{K}\sum_{k=0}^{K-1}\EE\|\nabla f(\xmi{k}) \| \leq   \frac{1}{K} \sum_{k=0}^{K-1} $ $\left[ \EE\| \nabla f(\xmi{k})-\vmti{k}\|+\EE\|\vmti{k}\| \right] \leq \epsilon$.
 So here it is only necessary to sample $|\SSm_1|=\frac{16\sigma^2}{\epsilon^2}$ data points  instead of the entire set of $n$ samples. 

Kasai\et\cite{kasai2018riemannian} proved that the IFO complexity of R-SRG is at the order of $\mathcal{O}\big(n+\frac{L^2}{\epsilon^4}\big)$ to obtain an $\epsilon$-accuracy solution. By comparison, we prove that R-SPIDER enjoys the complexity of $\Oc{\frac{1}{\epsilon^2}\min\left(\sqrt{n},\frac{1}{\epsilon}\right)}$, which is at least lower than R-SRG by a factor of $\frac{1}{\epsilon}$. This is because the normalization step in R-SPIDER allows us to well control the gradient estimation error and thus avoids sampling too many redundant samples at each iteration, resulting in sharper IFO complexity. Zhang\et\cite{zhang2016riemannian} showed that  R-SVRG has  the IFO complexity  $\mathcal{O}\big(n+\frac{\zeta^{1/2}n^{2/3}}{\epsilon^2}\big)$, where $\zeta\geq 1$ denotes the curvature parameter. Therefore, R-SPIDER improves over R-SVRG by a factor at least $n^{1/6}$ in IFO complexity. Note, here the curvature parameter $\zeta$ does not appear in our bounds, since we have avoided using the trigonometry inequality which characterizes the trigonometric geometric in Riemannian manifold~\cite{bonnabel2013stochastic,zhang2016riemannian,zhang2016first}.

The exponential mapping and parallel transport operators used in R-SPIDER are respectively classical instances of the more general concepts of retraction and vector transport~\cite{adler2002newton,absil2009optimization}. We note that under identical assumptions in~\cite{kasai2018riemannian}, the convergence rate and IFO complexity bounds for R-SPIDER generalize well to the setting where exponential mapping and parallel transport are replaced by retraction and vector transport operators. Some specific ways of constructing retraction and vector transport are available in~\cite{adler2002newton,absil2012projection,wen2013feasible}.

\textbf{Online setting.} Next we consider the online setting of problem~\eqref{generalproblem}. Similar to finite-sum setting, we prove in Theorem~\ref{objective} that the objective $f(\xm)$ can be sufficiently decreased when the gradient norm is not too small.
  \begin{thm}\label{objective} Suppose Assumptions~\ref{GL} and~\ref{boundedgradient} hold.
   Let $p\!=\!\frac{\sigma n_0}{\epsilon}$, $\etai{k} = \min\left(\frac{\epsilon}{2Ln_0}, \frac{ \|\vmti{k}\|}{4L n_0}\right)$, $|\SSm_1|\!=\!\frac{ 64\sigma^2}{\epsilon^2}$, $|\SSm_2|\!=\!\frac{4\sigma}{\epsilon n_0}$ and $n_0\in$ $ [1,4\sigma/\epsilon]$. Then for  problem~\eqref{generalproblem} under online setting, the  sequence $\{\xmi{k}\}$ produced by Algorithm~\ref{algmanifold} satisfies
	\begin{equation*}
	\begin{split}
	\EE \left[f(\xmi{k+1}) - f(\xmi{k})\right] \leq -\frac{\epsilon}{64Ln_0} \left( 12\EE [\|\vmti{k}\|]  -7\epsilon\right).
	\end{split}
	\end{equation*}
Moreover, to achieve $\EE [\|\nabla f(\xms)\|]\leq \epsilon$,  Algorithm~\ref{algmanifold} will terminate at most $\left(\frac{14Ln_0\Delta}{\epsilon^2}\right)$ iterations  in expectation, where $\Delta=f(\xmi{0})-f(\xmi{*})$ with $\xmi{*}=\argmin_{\xm\in\M} f(\xm)$.
\end{thm}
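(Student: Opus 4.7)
The plan is to mirror the proof of Theorem~\ref{objectivefinite} almost verbatim, since the target descent inequality and the iteration bound are formally identical; what changes is only the variance bookkeeping, because in the online setting we cannot ``sample everything'' for $\SSm_1$ to kill the first term in Lemma~\ref{lemma1}. So the whole argument reduces to verifying that the online parameter choice $(p,\etai{k},|\SSm_1|,|\SSm_2|)$ still forces $\EE[\|\vmti{k}-\nabla f(\xmi{k})\|^2]$ to be small enough (on the order of $\epsilon^2$) that the smoothness-based descent step goes through unchanged.

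First I would apply the geodesic $L$-smooth inequality implied by Assumption~\ref{GL} to the update $\xmi{k+1}=\Exp{\xmi{k}}{-\etai{k}\vmti{k}/\|\vmti{k}\|}$. Since $\iExp{\xmi{k}}{\xmi{k+1}}=-\etai{k}\vmti{k}/\|\vmti{k}\|$ with norm exactly $\etai{k}$, this yields
\begin{equation*}
f(\xmi{k+1})-f(\xmi{k}) \leq -\etai{k}\|\vmti{k}\| + \etai{k}\|\nabla f(\xmi{k})-\vmti{k}\| + \tfrac{L\etai{k}^2}{2},
\end{equation*}
after splitting $\nabla f(\xmi{k})=\vmti{k}+(\nabla f(\xmi{k})-\vmti{k})$ and invoking Cauchy--Schwarz. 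Next I would specialize Lemma~\ref{lemma1} with the prescribed online parameters. Because $\etai{k}\leq \epsilon/(2Ln_0)$, the normalization guarantees $\dis{\xmi{i}}{\xmi{i+1}}\leq \epsilon/(2Ln_0)$ for every $i$ in the current cycle, so
\begin{equation*}
\EE\|\vmti{k}-\nabla f(\xmi{k})\|^2 \leq \tfrac{\sigma^2}{|\SSm_1|} + \tfrac{L^2 p}{|\SSm_2|}\cdot\tfrac{\epsilon^2}{4L^2 n_0^2}.
\end{equation*}
Plugging $|\SSm_1|=64\sigma^2/\epsilon^2$, $p=\sigma n_0/\epsilon$ and $|\SSm_2|=4\sigma/(\epsilon n_0)$ collapses this to a constant multiple of $\epsilon^2$, and Jensen's inequality then bounds $\EE\|\vmti{k}-\nabla f(\xmi{k})\|$ by a constant times $\epsilon$. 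This is the exact analogue of the finite-sum variance bound used to prove Theorem~\ref{objectivefinite}.

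Then I would split the step size into the two regimes $\etai{k}=\epsilon/(2Ln_0)$ (when $\|\vmti{k}\|\geq 2\epsilon$) and $\etai{k}=\|\vmti{k}\|/(4Ln_0)$ (when $\|\vmti{k}\|< 2\epsilon$). In each regime, substituting the online variance bound into the displayed descent inequality and collecting terms yields the target
$\EE[f(\xmi{k+1})-f(\xmi{k})]\leq -\tfrac{\epsilon}{64 Ln_0}(12\EE\|\vmti{k}\|-7\epsilon)$,
with exactly the same arithmetic as in the finite-sum case. Finally, summing this inequality from $k=0$ to $K-1$ and using $\sum_k [f(\xmi{k+1})-f(\xmi{k})]\geq -\Delta$ gives an upper bound on $\sum_k \EE\|\vmti{k}\|$; combined with $\EE\|\nabla f(\xmi{k})\|\leq \EE\|\vmti{k}\|+\EE\|\vmti{k}-\nabla f(\xmi{k})\|$ and the termination rule $\|\vmti{k}\|\leq \epsilon/2$, a direct counting argument produces the bound $14Ln_0\Delta/\epsilon^2$ on the expected number of iterations.

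The main obstacle is choosing $(p,|\SSm_1|,|\SSm_2|)$ to balance two competing variance sources in Lemma~\ref{lemma1}: the snapshot term $\sigma^2/|\SSm_1|$ and the cumulative recursion term $pL^2\etai{k}^2/|\SSm_2|$. One must ensure both are $O(\epsilon^2)$ while simultaneously keeping the per-cycle IFO cost $|\SSm_1|+p|\SSm_2|$ small; the choice $p=\sigma n_0/\epsilon$ with $|\SSm_2|=4\sigma/(\epsilon n_0)$ is exactly the one that tunes $pL^2\etai{k}^2/|\SSm_2|$ to $O(\epsilon^2)$ while matching the snapshot cost $|\SSm_1|=O(\sigma^2/\epsilon^2)$. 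Verifying these inequalities to obtain the precise constants $12$ and $7$ in the descent bound is the only place where the online setting differs in substance from the finite-sum proof.
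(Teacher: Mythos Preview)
Your high-level plan is exactly what the paper does: reuse the finite-sum argument verbatim, with the only change being the variance bookkeeping in Lemma~\ref{lemma1} under the online parameter choice. The paper's proof verifies $\EE\|\vmti{k}-\nabla f(\xmi{k})\|^2\le \tfrac{\epsilon^2}{8}$ from $\tfrac{\sigma^2}{|\SSm_1|}+\tfrac{pL^2}{|\SSm_2|}\cdot\tfrac{\epsilon^2}{4L^2n_0^2}$, then telescopes and closes with the same triangle-inequality argument you describe.

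There is one minor divergence worth flagging. Your one-step descent inequality
\[
f(\xmi{k+1})-f(\xmi{k}) \le -\etai{k}\|\vmti{k}\| + \etai{k}\|\nabla f(\xmi{k})-\vmti{k}\| + \tfrac{L\etai{k}^2}{2}
\]
comes from Cauchy--Schwarz and carries the estimation error \emph{linearly}; the paper (in both Theorems~\ref{objectivefinite} and~\ref{objective}) instead applies Young's inequality to obtain
\[
f(\xmi{k+1}) \le f(\xmi{k}) + \tfrac{\widetilde\eta_k}{2}\|\nabla f(\xmi{k})-\vmti{k}\|^2 - \tfrac{\widetilde\eta_k}{2}(1-\widetilde\eta_k L)\|\vmti{k}\|^2,\qquad \widetilde\eta_k=\etai{k}/\|\vmti{k}\|,
\]
so that the variance enters \emph{quadratically} and can be controlled directly by the $\epsilon^2/8$ bound without Jensen. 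The paper then lower-bounds $\widetilde\eta_k(1-\widetilde\eta_k L)\|\vmti{k}\|^2$ using $\widetilde\eta_k\le \tfrac{1}{4Ln_0}$ together with the elementary inequality $x^2\ge 2|x|-1$, which is what produces the specific constants $12$ and $7$ in one stroke, without any case split on the two regimes of $\etai{k}$. Your linear route is perfectly valid and yields the same order of bound, but the arithmetic is not literally identical to the finite-sum proof and will give slightly different constants unless you also switch to the quadratic decomposition; so your remark ``exactly the same arithmetic'' is a bit optimistic.
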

\begin{proof}
The proof mimics that of Theorem~\ref{objectivefinite} with proper adaptation to online setting. See Appendix~\ref{proofofobjection}.
\end{proof}
As a direct consequence of this result, the following corollary establishes the IFO complexity of R-SPIDER for the online optimization.

\begin{cor}\label{complexityonline}
Using the same assumptions and parameters in Theorem~\ref{objective}, the IFO complexity of Algorithm~\ref{algmanifold} is  $\mathcal{O}\left(\frac{L  \sigma \Delta}{\epsilon^3}\right)$ to achieve $\EE[\|\nabla f(\xm)\|]\leq \epsilon$.
\end{cor}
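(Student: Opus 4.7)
The plan is to obtain the IFO bound by directly tallying the per-iteration sample cost of Algorithm~\ref{algmanifold} over the expected run length guaranteed by Theorem~\ref{objective}. Since the sampling pattern is periodic with period $p$, I would partition the $K$ total iterations into \emph{snapshot} steps (those with $\mod(k,p)=0$, each costing $|\SSm_1|$ stochastic gradient evaluations) and \emph{recursive} steps (each costing $|\SSm_2|$ evaluations), then sum the two contributions and substitute the explicit parameter choices from Theorem~\ref{objective}.

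First I would invoke Theorem~\ref{objective} to conclude that, in expectation, Algorithm~\ref{algmanifold} terminates within $K=\frac{14L n_0\Delta}{\epsilon^2}$ iterations. Among these, the snapshot phase is triggered at most $\lceil K/p\rceil$ times, contributing a total of
\begin{equation*}
\frac{K}{p}\cdot|\SSm_1|\ =\ \frac{K\epsilon}{\sigma n_0}\cdot\frac{64\sigma^2}{\epsilon^2}\ =\ \frac{64K\sigma}{\epsilon n_0}
\end{equation*}
IFO calls, while the remaining recursive iterations contribute at most
\begin{equation*}
K\cdot|\SSm_2|\ =\ K\cdot\frac{4\sigma}{\epsilon n_0}\ =\ \frac{4K\sigma}{\epsilon n_0}
\end{equation*}
IFO calls. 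Adding the two and substituting $K=\frac{14L n_0\Delta}{\epsilon^2}$ yields a total cost proportional to $\frac{L\sigma\Delta}{\epsilon^3}$, where the factor $n_0$ conveniently cancels because the per-iteration cost scales as $1/n_0$ while the iteration complexity scales as $n_0$.

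I do not anticipate any real obstacle here, as this is essentially a bookkeeping calculation on top of Theorem~\ref{objective}. The one subtlety worth flagging is that the iteration count from Theorem~\ref{objective} holds in expectation, so the IFO count should likewise be stated in expectation and obtained by linearity of expectation over the random stopping time. It is also worth noting, as part of the argument, that the cancellation of $n_0$ is what gives the mini-batch-size-independent complexity and underlies the claim of linear speedup in parallel settings mentioned earlier in the paper.
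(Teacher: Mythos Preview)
Your proposal is correct and matches the paper's own proof essentially line for line: the paper also invokes Theorem~\ref{objective} for $K=\frac{14Ln_0\Delta}{\epsilon^{2}}$ and computes $\mathcal{O}\big(\frac{K}{p}|\SSm_1|+K|\SSm_2|\big)=\mathcal{O}\big(\frac{L\sigma\Delta}{\epsilon^3}\big)$ via the same substitution and $n_0$-cancellation. Your remark about stating the IFO count in expectation is a valid refinement but does not alter the argument.
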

\begin{proof}
See Appendix~\ref{proofofcomplexityonline} for a proof of this result.
\end{proof}
Bonnabel\et\cite{bonnabel2013stochastic} have also analyzed R-SGD under online setting, but only with asymptotic convergence guarantee obtained. By comparison, we for the first time establish non-asymptotic complexity bounds for Riemannian online non-convex optimization.

{
	\begin{algorithm}[hb!t]
		\caption{{Riemannian Gradient Dominated SPIDER (R-GD-SPIDER)} }
		\label{algmanifold2}
		\begin{algorithmic}[1]
			\STATE {\bfseries Input:} initial point $\xmti{0}$, initial accuracy $\epsilon_0$,\,learning rate $\eta^0$, mini-batch sizes $|\SSm^0_1|$ and $|\SSm^0_2|$, iteration interval $p^0$, final accuracy $\epsilon$
			\FOR{ $t=1$ to $T$ }			
			\STATE
$\xmti{t}=\text{R-SPIDER}(\xmti{t-1},\epsilon_{t-1},\eta^t, \pii{t}, |\Si{1}{t}|, |\Si{2}{t}|)$.
\STATE Set $\epsilon_t\!=\!0.5\epsilon_{t-1}$, and $\eta^t, \pii{t}, |\Si{1}{t}|, |\Si{2}{t}|$ properly.
			\ENDFOR
			
			\STATE {\bfseries Output:} $\xmti{t}$
		\end{algorithmic}
	\end{algorithm}
	\vspace{-0.8em}
}

\subsection{On gradient dominated functions}\label{gradientdominatedfunctions}

We now turn to a special case of problem~\eqref{generalproblem} with gradient dominated loss function as defined in Definition~\ref{def:gdf}. For instance, the strongly geodesically convex (SGC) functions\footnote{A strongly geodesically convex function satisfies $f(\ym)\geq f(\xm)+\la \nabla f(\xm), \iExp{\xm}{\ym} \ra+\frac{\mu}{2}\|\iExp{\xm}{\ym}\|^2, \forall \xm,\ym\in\M$, for som $\mu>0$, which immediately implies $ f(\xm) - f(\xmi{*}) \leq  \frac{1}{2\mu}\| \nabla f(\xm)\|^2$ by Cauchy-Schwarz inequality.} are gradient dominated. Some non-strongly convex problems, \eg ill-conditioned linear prediction and logistic regression~\cite{karimi2016linear}, and   Riemannian  non-convex problems, \eg PCA~\cite{zhang2016riemannian}, also belong to gradient dominated functions. Please refer to~\cite{nesterov2006cubic,karimi2016linear} for more instances of gradient dominated functions. To better fit gradient dominated functions, we develop the Riemannian gradient dominated SPIDER (R-GD-SPIDER) as a multi-stage variant of R-SPIDER. A high-level description of R-GD-SPIDER is outlined in Algorithm~\ref{algmanifold2}. The basic idea is to use more aggressive learning rates in early stage of processing and gradually shrink the learning rate in later stage. With the help of such a simulated annealing process, R-GD-SPIDER exhibits linear convergence behavior for finite-sum problems, as formally stated in Theorem~\ref{totalcomplexityofconvex}. For the $t$-th iteration in Algorithm~\ref{algmanifold2}, R-SPIDER uses $|\SSm_{1}^t|$ and $|\SSm_{2,k}^t|$ samples to compute $\vmi{k}$ for $k=\lfloor k/p\rfloor \cdot p$ and  $k\neq \lfloor k/p\rfloor \cdot p$, respectively.

\begin{thm}\label{totalcomplexityofconvex}
Suppose that function $f(\xm)$ is $\tau$-gradient dominated, and Assumptions~\ref{GL} and~\ref{boundedgradient} hold. For finite-sum setting, at the $t$-th iteration, set $\epsilon_0=  \frac{\sqrt{\Delta}}{2\sqrt{\tau}}$,
$\epsilon_{t}=\frac{\epsilon_0}{2^t}$,
$s_t=\!\min\!\big(n,\frac{ 32\sigma^2}{\epsilon_{t-1}^2}\big)$, $p^t\!=\!n_0^t \sqrt{s_t}$, $\etai{k}^t =  \frac{\|\vmi{k}^t\|}{2Ln_0}$, 
$|\SSm_1^t|\!=\!s_t$, $|\SSm_{2,k}^t|\!=\!\min (\frac{8p^t\|\vmi{k-1}^t\|^2}{(n_0^t)^2\epsilon_{t-1}^2},n )$, where $n_0^t\in[1,\frac{8\sqrt{s_t}  \|\vmi{k-1}^t\|^2}{\epsilon_{t-1}^2}]$.\\
\textbf{(1)} The  sequence $\{\xmti{t}\}$ produced by Algorithm~\ref{algmanifold2} satisfies
\begin{equation*}
\EE \left[ f(\xmti{t})-f(\xmi{*})\right] \leq \frac{ \Delta}{4^t} \ \ \  \text{and}\ \ \  \EE [\|\nabla f(\xmti{t})\|] \leq  \frac{1}{2^t}\sqrt{\frac{\Delta}{\tau}},
\end{equation*}
where  $\Delta=f(\xmti{0})-f(\xmi{*})$ with $\xmi{*}=\argmin_{\xm\in\M} f(\xm)$. \\
\textbf{(2)} To achieve $\EE [\|\nabla f(\xmti{T})\|]\! \leq\! \epsilon$, in expectation the IFO complexity is $\mathcal{O}\left( \min\left(\left(n+ \tau L \sqrt{n}\right)\log\left(\frac{1}{\epsilon} \right),\frac{\tau L \sigma}{\epsilon}\right) \right).$	
\end{thm}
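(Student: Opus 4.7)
I would prove both statements by induction on the outer counter $t$, using the inner R-SPIDER guarantee from Theorem~\ref{objectivefinite} as the engine and the $\tau$-gradient-dominated property as the glue. Part (1) has a trivial base case at $t=0$ since $\Delta=f(\xmti{0})-f(\xmi{*})$. For the inductive step, suppose $\EE[f(\xmti{t-1})-f(\xmi{*})]\leq \Delta/4^{t-1}$. At stage $t$, Algorithm~\ref{algmanifold2} invokes R-SPIDER with accuracy target $\epsilon_{t-1}=(1/2^{t})\sqrt{\Delta/\tau}$ starting from $\xmti{t-1}$, whose expected suboptimality is $\Delta_{t-1}:=\Delta/4^{t-1}$. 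The parameter choices of Theorem~\ref{totalcomplexityofconvex} at stage $t$ match those of Theorem~\ref{objectivefinite} after the substitution $\Delta\mapsto\Delta_{t-1}$, $\epsilon\mapsto\epsilon_{t-1}$, so that theorem yields $\EE[\|\nabla f(\xmti{t})\|]\leq \epsilon_{t-1}=(1/2^{t})\sqrt{\Delta/\tau}$, which is precisely the second bound of part (1). Upgrading this first-moment control to the second-moment estimate $\EE[\|\nabla f(\xmti{t})\|^2]\leq \epsilon_{t-1}^2$ (up to a constant factor absorbable into the calibration $\epsilon_0=\sqrt{\Delta}/(2\sqrt{\tau})$) and invoking $f(\xm)-f(\xmi{*})\leq \tau\|\nabla f(\xm)\|^2$ yields $\EE[f(\xmti{t})-f(\xmi{*})]\leq \tau\epsilon_{t-1}^2=\Delta/4^{t}$, closing the induction.

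For part (2), I would exploit the fact that the ratio $\Delta_{t-1}/\epsilon_{t-1}^{2}=4\tau$ is stage-independent. Feeding this ratio into Corollary~\ref{complexityfinite} shows that stage $t$ costs at most $\mathcal{O}\bigl(\min(n+\tau L\sqrt{n},\,\tau L\sigma/\epsilon_{t-1})\bigr)$ IFOs. The number of outer stages needed so that $\epsilon_T=\epsilon_0/2^{T}\leq \epsilon$ is $T=\mathcal{O}(\log(1/\epsilon))$. Summing the first per-stage bound over these $T$ stages gives $\mathcal{O}\bigl((n+\tau L\sqrt{n})\log(1/\epsilon)\bigr)$, while summing the second produces a geometric series $\sum_{t=1}^{T}\tau L\sigma\cdot 2^{t-1}/\epsilon_0$ dominated by its last term $\mathcal{O}(\tau L\sigma/\epsilon_T)=\mathcal{O}(\tau L\sigma/\epsilon)$. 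Taking the minimum of the two estimates yields the advertised complexity.

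The main obstacle is the moment upgrade from $\EE\|\nabla f\|$ to $\EE\|\nabla f\|^{2}$, since only the former is delivered by Theorem~\ref{objectivefinite}. I would handle this by noting that in Algorithm~\ref{algmanifold} the returned iterate obeys the deterministic stopping rule $\|\vmti{k}\|\leq 0.5\epsilon_{t-1}$, so $\|\vmti{k}\|^{2}\leq \epsilon_{t-1}^{2}/4$ pointwise, and then combining the decomposition $\|\nabla f\|^{2}\leq 2\|\vmti{k}\|^{2}+2\|\vmti{k}-\nabla f\|^{2}$ with the conditional estimation bound of Lemma~\ref{lemma1}. Under the chosen batch sizes $|\SSm_{1}^{t}|$ and $|\SSm_{2,k}^{t}|$, the variance term is also $\mathcal{O}(\epsilon_{t-1}^{2})$, so the induced constant can be absorbed into $\epsilon_{0}$. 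A minor technicality is that $|\SSm_{2,k}^{t}|$ depends on the random quantity $\|\vmi{k-1}^{t}\|$, so Lemma~\ref{lemma1} must be applied conditionally on the past; this is routine once the inductive framework is in place.
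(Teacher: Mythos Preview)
Your inductive scaffolding for part (1) and the geometric summation for part (2) are exactly what the paper does. The gap is in the per-stage engine. The parameter choices in Theorem~\ref{totalcomplexityofconvex} do \emph{not} match those of Theorem~\ref{objectivefinite}: the stage-$t$ stepsize is $\eta_k^t=\|\bm v_k^t\|/(2Ln_0)$, not the truncated $\min(\epsilon/(2Ln_0),\|\bm v_k\|/(4Ln_0))$, and the inner batch $|\mathcal S_{2,k}^t|$ is \emph{adaptive} in $\|\bm v_{k-1}^t\|^2$ rather than the constant $4\sqrt s/n_0$. So neither Theorem~\ref{objectivefinite} nor Corollary~\ref{complexityfinite} can be invoked as a black box, and your stopping-rule workaround for the moment upgrade does not address this mismatch (it is also in tension with the uniform-random output that the proofs of Theorems~\ref{objectivefinite} and~\ref{objective} actually analyze).

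The paper instead proves a bespoke inner guarantee (Lemma~\ref{taugradientcomplexity}) tailored precisely to these parameters. The point of the un-truncated stepsize $\eta_k=\|\bm v_k\|/(2Ln_0)$ is that the descent inequality becomes
\[
f(\bm x_{k+1})\le f(\bm x_k)+\tfrac{1}{4Ln_0}\|\nabla f(\bm x_k)-\bm v_k\|^2-\tfrac{1}{8Ln_0}\|\bm v_k\|^2,
\]
so averaging over $k$ yields a \emph{second-moment} bound $\tfrac{1}{K}\sum_k\mathbb E\|\bm v_k\|^2\le\epsilon^2/4$ directly---no upgrade from first to second moment is needed. The adaptive choice $|\mathcal S_{2,k}|\propto\|\bm v_{k-1}\|^2$ is exactly what keeps the variance term $\le\epsilon^2/16$ under this larger step (via the variable-batch refinement of Lemma~\ref{lemma1}), and it simultaneously makes the expected per-stage IFO cost come out to $\mathcal O(\min(n+\tau L\sqrt n,\tau L\sigma/\epsilon_{t-1}))$. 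With $\mathbb E\|\nabla f(\widetilde{\bm x}_t)\|^2\le\epsilon_{t-1}^2$ in hand, gradient dominance gives $\mathbb E[f(\widetilde{\bm x}_t)-f(\bm x^*)]\le\tau\epsilon_{t-1}^2$ immediately, and your induction closes with $\Delta_t\le\tau\epsilon_{t-2}^2=4\tau\epsilon_{t-1}^2$.
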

\begin{proof}
The part (1) follows immediately from the update rule of $\epsilon_t$. The part (2) can be proved by establishing the IFO bound $\min\left( n+\tau L \sqrt{n}  ,\frac{\tau L\sigma}{\epsilon_{t+1}}\right)$ for each stage $t$ and then putting them together. See Appendix~\ref{proofoffinitegradient}.
\end{proof}

\begin{figure*}[ht]
\begin{center}
\setlength{\tabcolsep}{0.8pt} 
\begin{tabular}{cccc}
\includegraphics[width=0.245\linewidth]{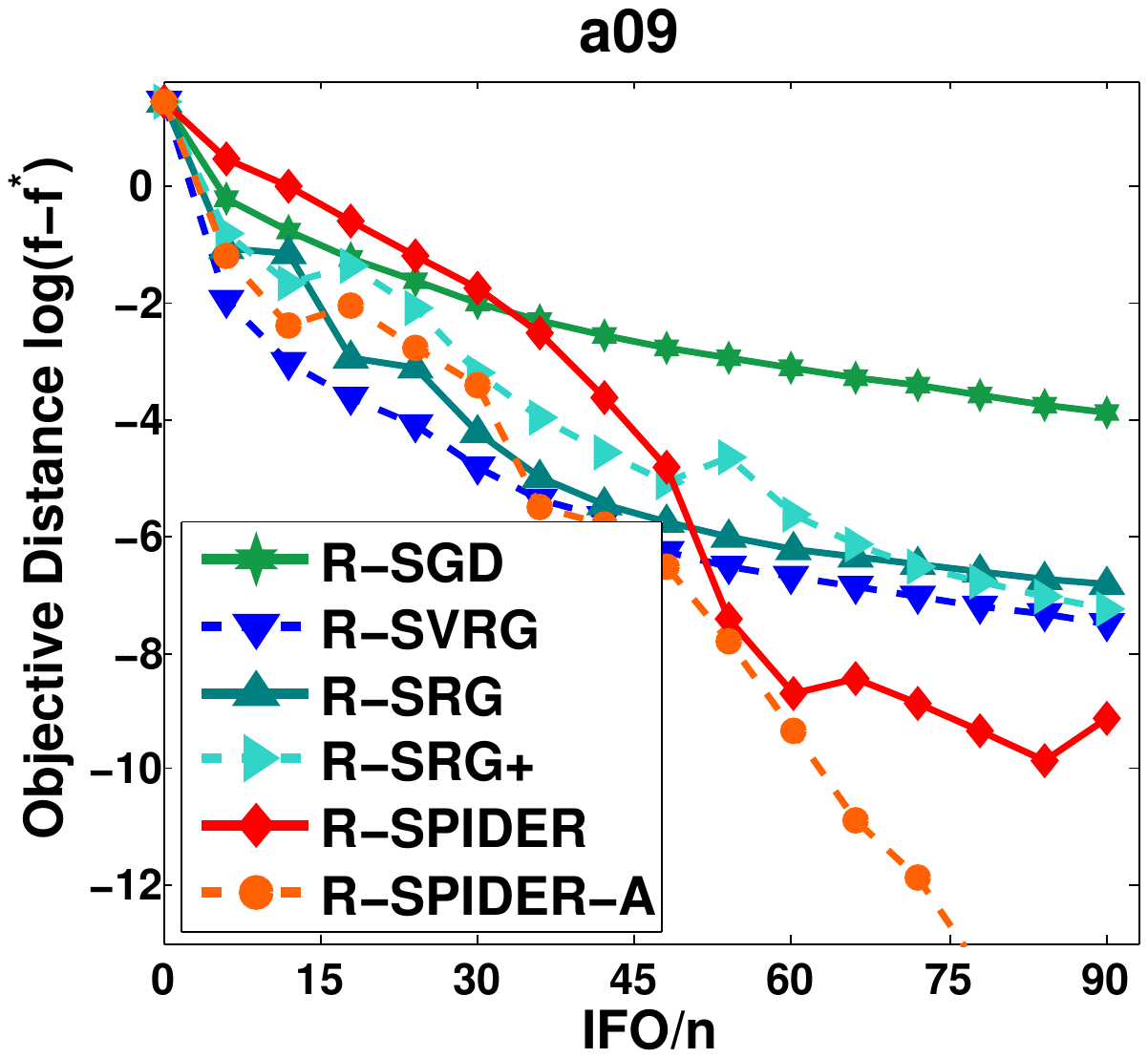}&
\includegraphics[width=0.245\linewidth]{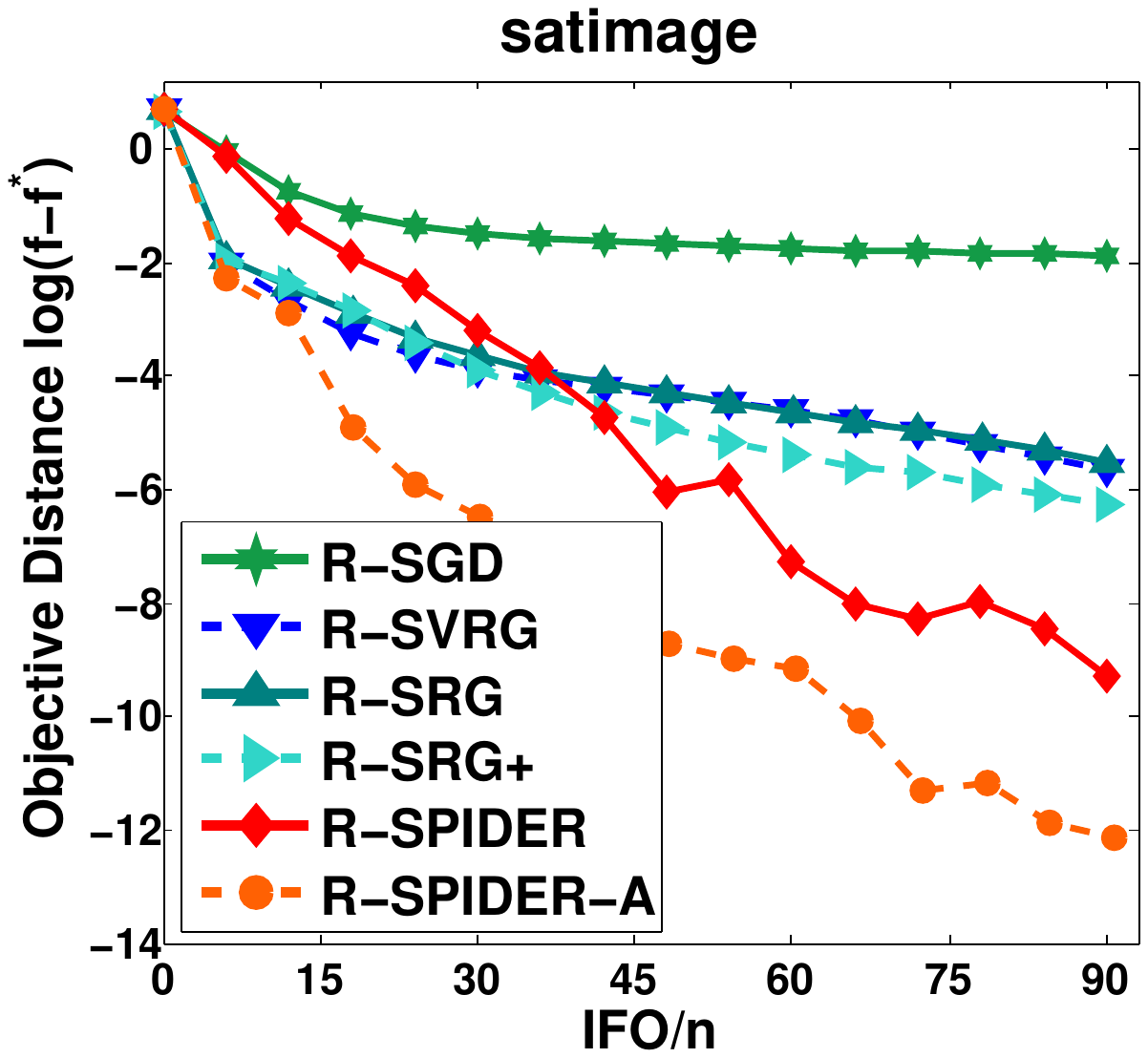}&
\includegraphics[width=0.245\linewidth]{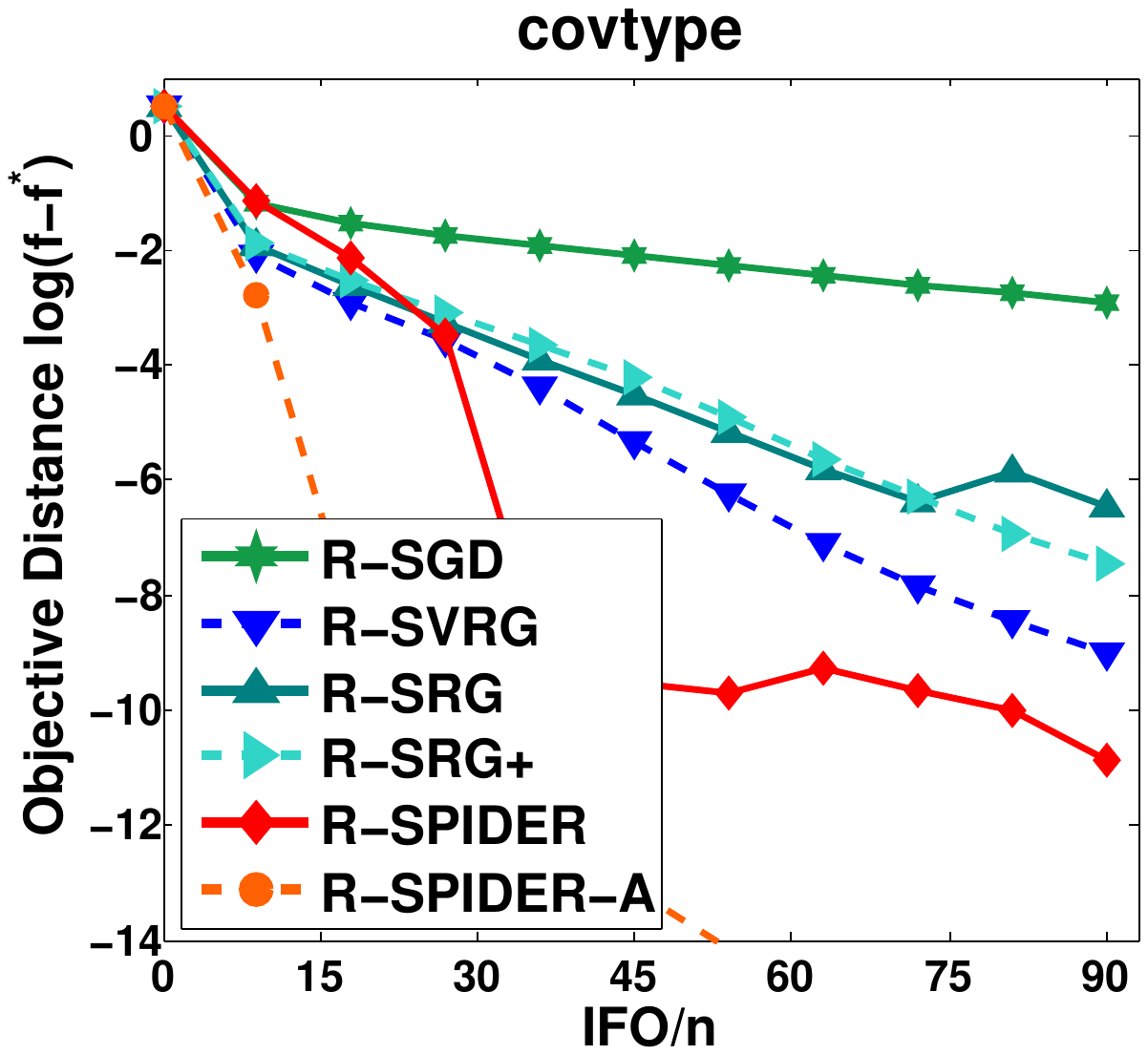}&
\includegraphics[width=0.245\linewidth]{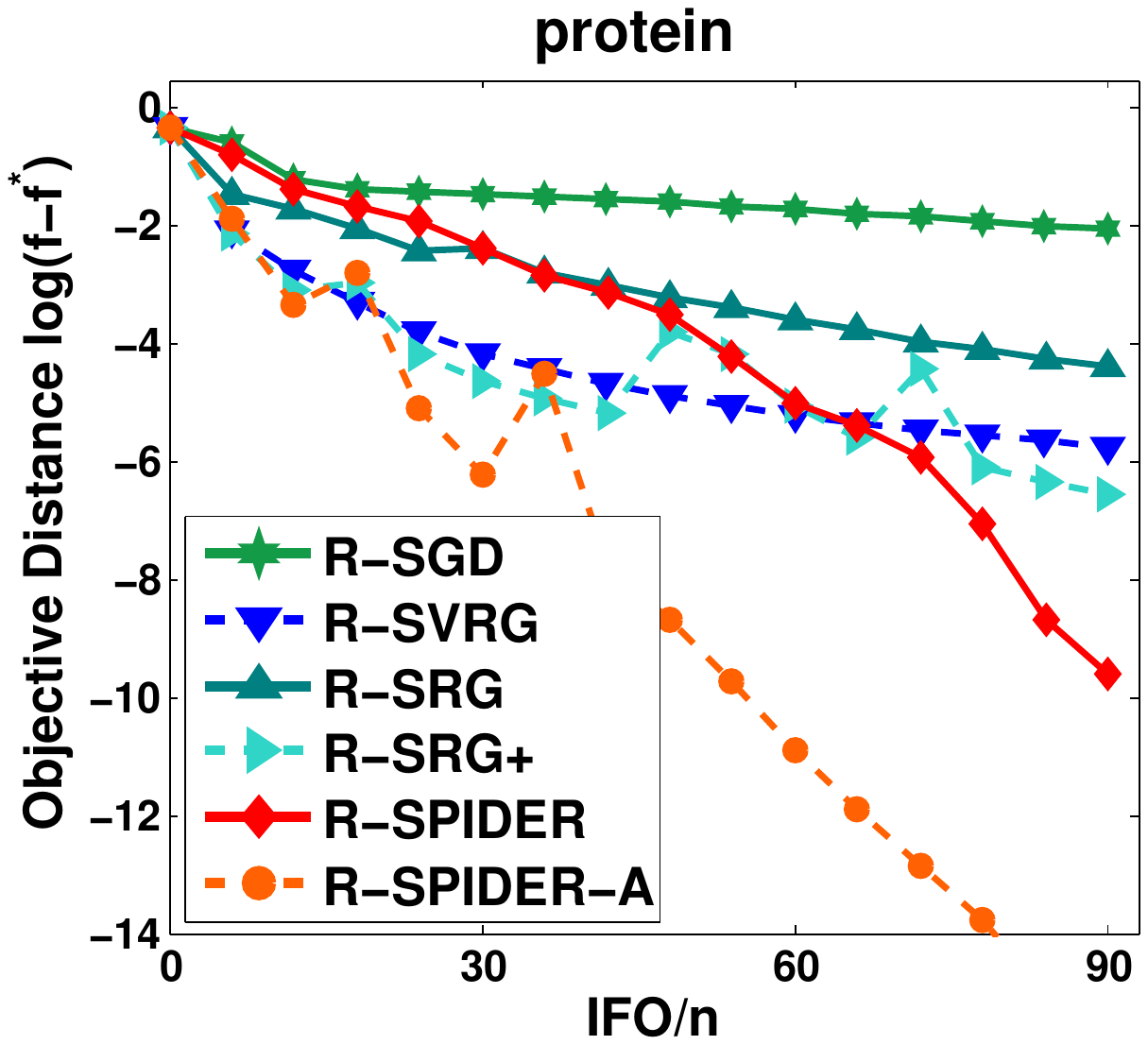}
\end{tabular}
\begin{tabular}{ccc}
\hspace{-0.6em}
\includegraphics[width=0.49\linewidth]{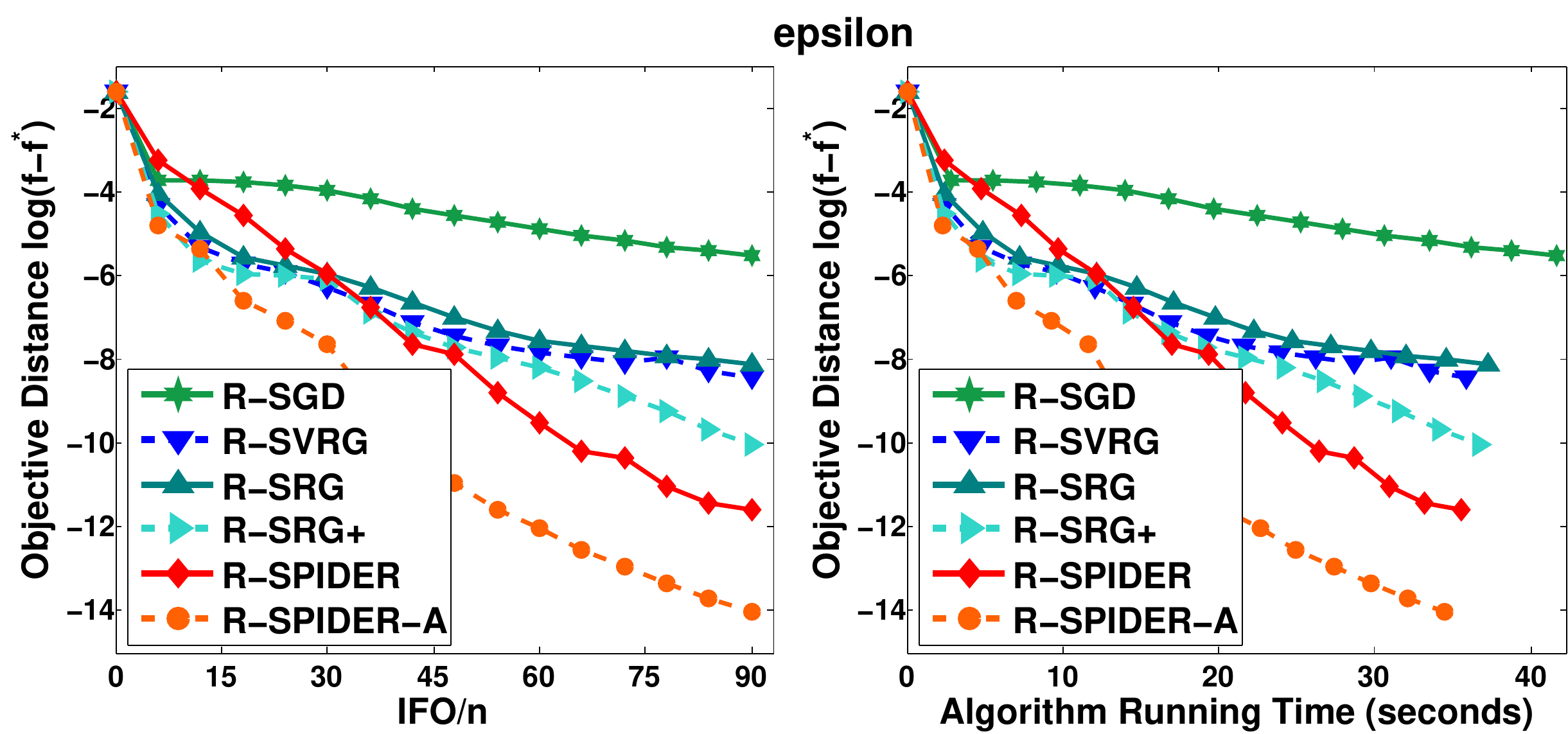}&&
\includegraphics[width=0.49\linewidth]{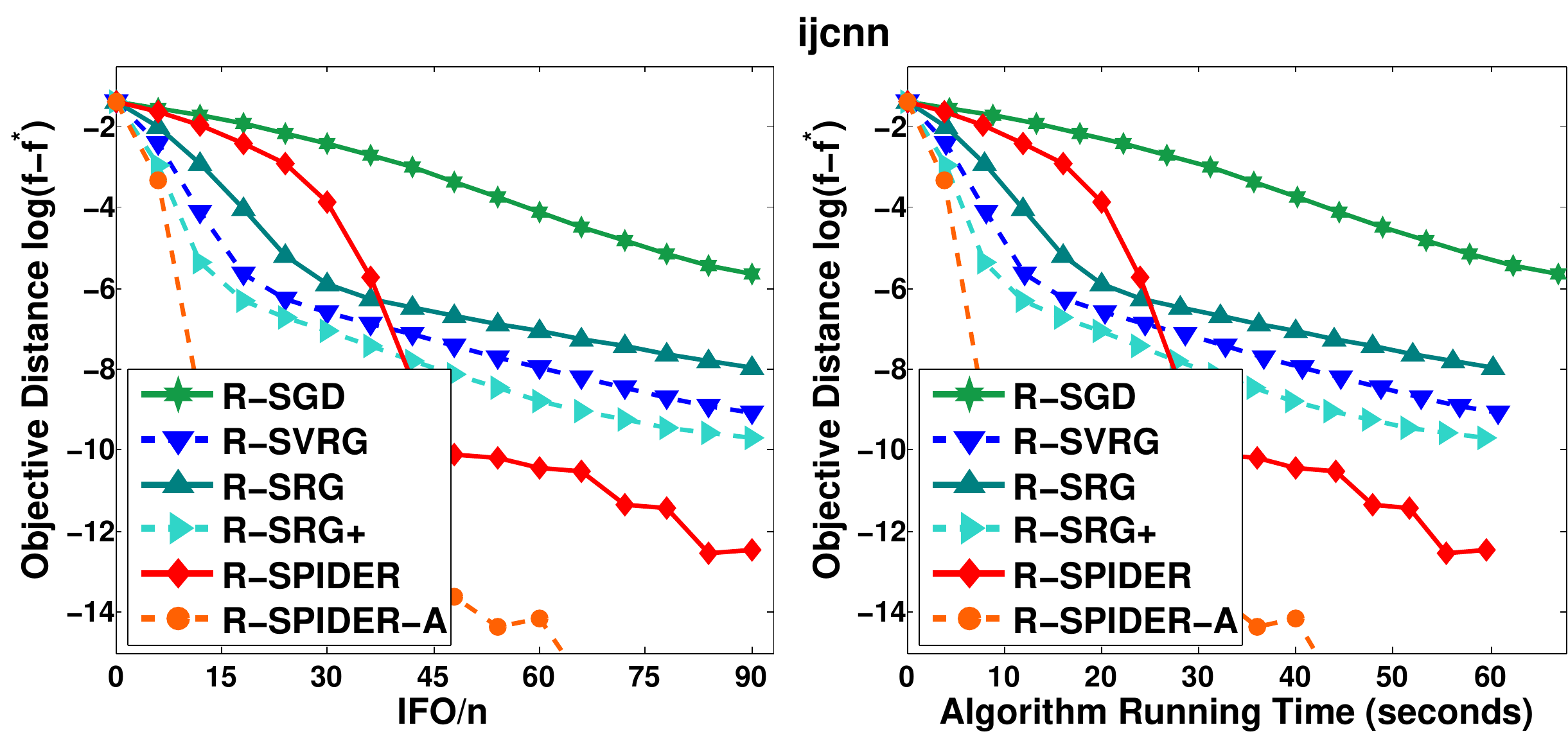}
\end{tabular}
\end{center}
\vspace{-1.2em}
\caption{Comparison among Riemannian stochastic gradient algorithms on  $k$-PCA problem. }
\label{comparisonasfdasfdwith}
\vspace{-1.0em}
\end{figure*}

The main message conveyed by Theorem~\ref{totalcomplexityofconvex} is that R-GD-SPIDER enjoys a global linear rate of convergence and its IFO complexity is at the order of  $\mathcal{O}\left( \min\left(\left(n+ \tau L \sqrt{n}\right)\log\left(\frac{1}{\epsilon} \right),\frac{\tau L \sigma}{\epsilon}\right) \right).$ For R-SVRG with $\tau$-gradient dominated functions, Zhang\et\cite{zhang2016riemannian} also established a linear convergence rate and an IFO complexity bound $\mathcal{O}\big((n+\tau L\zeta^{\frac{1}{2}}n^{\frac{2}{3}})\log\left(\frac{1}{\epsilon}\right)\big)$. As a comparison, our R-GD-SPIDER makes an improvement over R-SVRG in IFO complexity by a factor of $n^{\frac{1}{6}}$. For R-SRG~\cite{kasai2018riemannian}, the corresponding IFO complexity is $\Oc{(n+\tau^2 L^2) \log\left(\frac{1}{\epsilon}\right)}$. 
Therefore, in terms of IFO complexity, R-GD-SPIDER is superior to R-SRG  when the optimization accuracy $\epsilon$ is moderately small at a huge data size $n$.

Turning to the online setting, R-GD-SPIDER also converges linearly, as formally stated in Theorem~\ref{totalcomplexityofconvex2}.
\begin{thm}\label{totalcomplexityofconvex2}
Suppose that $f(\xm)$ is $\tau$-gradient dominated, and Assumptions~\ref{GL} and~\ref{boundedgradient} hold.  For online setting, at the $t$-th iteration, let $\epsilon_0= \frac{\sqrt{\Delta}}{2\sqrt{\tau}}$, $\epsilon_{t}=\frac{\epsilon_0}{2^t}$, $p_t\!=\!\frac{\sigma n_0^t}{\epsilon_{t-1}}$, $\etai{k}^t = \frac{\|\vmi{k}^{t}\|}{2Ln_0^t}$,
$|\SSm_1^t|\!=\!\frac{ 32\sigma^2}{\epsilon_{t-1}^2}$, $|\SSm_{2,k}^t|\!=\!\frac{8\sigma \|\vmi{k-1}^{t}\|^2}{\epsilon_{t-1}^3 n_0^t}$,
where  $n_0^t\in[1,\frac{8\sigma\|\vmi{k-1}^t\|^2}{ \epsilon_{t-1}^3}]$.\\
\textbf{(1)} The  sequence $\{\xmti{t}\}$ produced by Algorithm~\ref{algmanifold2} satisfies
\begin{equation*}
\EE \left[ f(\xmti{t})-f(\xmi{*})\right] \leq \frac{ \Delta}{4^t} \ \ \  \text{and}\ \ \  \EE [\|\nabla f(\xmti{t})\|] \leq  \frac{1}{2^t}\sqrt{\frac{\Delta}{\tau}},
\end{equation*}
where  $\Delta=f(\xmti{0})-f(\xmi{*})$ with $\xmi{*}=\argmin_{\xm\in\M} f(\xm)$. \\
\textbf{(2)} To achieve $\EE [\|\nabla f(\xmti{T})\|]\leq \epsilon$, in expectation the IFO complexity is $\mathcal{O}\left( \frac{\tau L \sigma}{\epsilon}\right).$	
\end{thm}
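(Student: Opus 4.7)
\textbf{Proof proposal for Theorem~\ref{totalcomplexityofconvex2}.}

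The plan is to parallel the finite-sum analogue Theorem~\ref{totalcomplexityofconvex}, but substituting the online single-stage machinery behind Theorem~\ref{objective} for each stage of R-GD-SPIDER. Two features of the prescribed parameters drive the whole argument: the multiplicative step size $\eta_k^t = \|\vmi{k}^t\|/(2Ln_0^t)$ turns the descent inequality into one that is quadratic in $\|\vmti{k}^t\|$, and the gradient-proportional mini-batch $|\Si{2,k}{t}|\propto\|\vmi{k-1}^t\|^2$ tightens the stochastic gradient-estimation variance in lockstep with the current gradient magnitude.

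For part (1), I would argue by induction on $t$ with hypothesis $\EE[f(\xmti{t-1})-f(\xmi{*})]\leq\Delta/4^{t-1}$. Combining geodesic $L$-smoothness with the multiplicative step size and the identity $\la \nabla f, \vmti{k}\ra = \tfrac{1}{2}(\|\nabla f\|^2 + \|\vmti{k}\|^2 - \|\vmti{k}-\nabla f\|^2)$ yields a per-step decrease of the form
\begin{equation*}
\EE[f(\xmi{k+1}^t) - f(\xmi{k}^t)] \leq -\tfrac{c_1}{Ln_0^t}\EE[\|\nabla f(\xmi{k}^t)\|^2] + \tfrac{c_2}{Ln_0^t}\EE[\|\vmti{k}^t - \nabla f(\xmi{k}^t)\|^2].
\end{equation*}
Lemma~\ref{lemma1}, applied with the prescribed $|\Si{1}{t}|$ and the adaptive $|\Si{2,k}{t}|$, bounds the estimation-error term by $\Ocs{\epsilon_{t-1}^2/(Ln_0^t)}$. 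Telescoping over the $K_t$ iterations of stage $t$ and invoking the induction hypothesis delivers $\EE[\|\nabla f(\xmti{t})\|^2]\leq \Delta/(\tau 4^t)$; $\tau$-gradient dominance then upgrades this to $\EE[f(\xmti{t})-f(\xmi{*})]\leq \tau\cdot\Delta/(\tau 4^t)=\Delta/4^t$, closing the induction, and Jensen simultaneously gives $\EE[\|\nabla f(\xmti{t})\|]\leq (1/2^t)\sqrt{\Delta/\tau}$.

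For part (2), a single iteration of stage $t$ incurs $|\Si{1}{t}|/p^t + |\Si{2,k}{t}|$ stochastic gradients on average. Summing over the $K_t = \Ocs{Ln_0^t\Delta_t/\epsilon_t^2}$ iterations of stage $t$ and replacing $\sum_k\EE[\|\vmi{k-1}^t\|^2]$ by the telescoping bound produced by the quadratic sufficient-decrease of part (1) collapses the per-stage IFO cost to $\Ocs{\tau L\sigma/\epsilon_t}$. The outer sum $\sum_{t=1}^T \Ocs{\tau L\sigma/\epsilon_t}$ with $T = \lceil\log_2(\epsilon_0/\epsilon)\rceil$ is a geometric series dominated by its final term, giving the announced $\Ocs{\tau L\sigma/\epsilon}$ complexity.

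The main obstacle is the tight accounting of the adaptive mini-batch: a worst-case $\|\vmi{k-1}^t\|\leq\sigma$ bound would inflate the IFO count by a factor of $1/\epsilon^2$ and erase the gradient-dominated improvement. The crucial saving is that the very same quadratic sufficient-decrease recursion used in part (1) simultaneously yields a telescoping control on $\sum_k\EE[\|\vmi{k}^t\|^2]$ in terms of $\Delta_t$, tightly coupling the two parts of the proof. This coupling — aggressive multiplicative step sizes guarded by gradient-proportional batch sizes — is precisely what produces the $\Ocs{1/\epsilon}$ rather than the generic $\Ocs{1/\epsilon^3}$ complexity in the gradient-dominated regime.
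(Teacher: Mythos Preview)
Your proposal is correct and follows essentially the same route as the paper: the paper packages the single-stage analysis into an auxiliary lemma that establishes the quadratic sufficient-decrease $f(\xmi{k+1})\le f(\xmi{k})+\tfrac{1}{4Ln_0}\|\nabla f(\xmi{k})-\vmti{k}\|^2-\tfrac{1}{8Ln_0}\|\vmti{k}\|^2$, bounds the estimation error via the adaptive-batch variant of Lemma~\ref{lemma1}, telescopes $\sum_k\EE\|\vmti{k}\|^2$ to cap the per-stage IFO at $\Ocs{L\Delta_t\sigma/\epsilon_{t-1}^3}=\Ocs{\tau L\sigma/\epsilon_{t-1}}$ via $\Delta_t\le\tau\epsilon_{t-2}^2$, and then sums the geometric series. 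Your identification of the coupling between the descent recursion and the adaptive-batch IFO accounting is exactly the mechanism the paper exploits; the only cosmetic difference is that the paper writes the decrease in terms of $\|\vmti{k}\|^2$ and converts to $\|\nabla f\|^2$ at the end, whereas you invoke the polarization identity up front.
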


\begin{proof}
See Appendix~\ref{proofofgradientonline} for a proof of this result.
\end{proof}

Such a non-asymptotic convergence result is new to online Riemannian gradient dominated optimization.

\section{Experiments}\label{experiments}
In this section, we compare the proposed R-SPIDER with several state-of-the-art Riemannian stochastic gradient algorithms, including R-SGD~\cite{bonnabel2013stochastic}, R-SVRG~\cite{zhang2016riemannian,kasai2016riemannian}, R-SRG~\cite{kasai2018riemannian} and R-SRG+~\cite{kasai2018riemannian}. We evaluate all the considered algorithms on two learning tasks: the $k$-PCA problem and the low-rank matrix completion problem.
We run simulations on ten datasets, including six datasets from LibSVM, three face datasets (\textsf{YaleB}, \textsf{AR} and \textsf{PIE}) and one recommendation dataset (\textsf{MovieLens-1M}). The details of these datasets are described in Appendix~\ref{append:more_experiment}. For all the considered algorithms,
we tune their hyper-parameters optimally.

\textbf{A practical implementation of R-SPIDER.} To achieve the IFO complexity in Corollary~\ref{complexityfinite}, it is suggested to set the learning rate as $\eta=\frac{\epsilon}{4Ln_0}$ where $\epsilon$ is the desired optimization accuracy. However, since in the initial epochs the computed point is far from the optimum to problem~\eqref{generalproblem}, using a tiny learning rate could usually be conservative. In contrast, by using a more aggressive learning rate at the initial optimization stage, we can expect stable but faster convergence behavior. Here for R-SPIDER we design a decaying learning rate with formulation $\eta_k=\alpha^{\lfloor\frac{k}{p}\rfloor} \cdot \beta$ and call it ``R-SPIDER-A'', where $\alpha$ and $\beta$ are two constants. In our experiments, $\alpha$ is selected from $\{0.8, 0.85,0.9,0.95,0.99\}$ and $\beta$ from $\{5\times 10^{-2}, 10^{-2}, 5\times 10^{-3}, 10^{-3}\}$.

\begin{figure*}[ht]
\begin{center}
\setlength{\tabcolsep}{0.8pt} 
\begin{tabular}{cccc}
\includegraphics[width=0.245\linewidth]{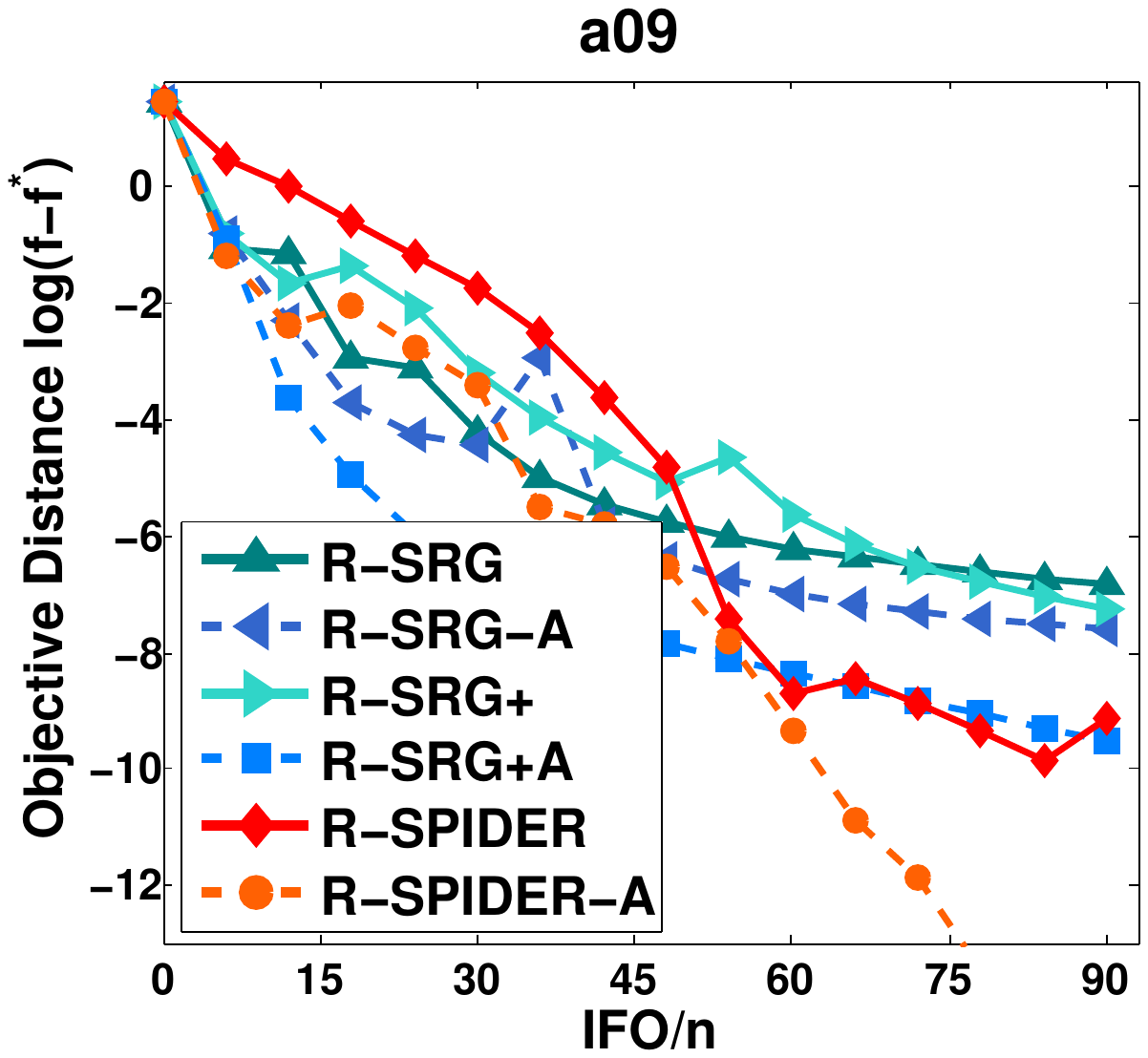}&
\includegraphics[width=0.245\linewidth]{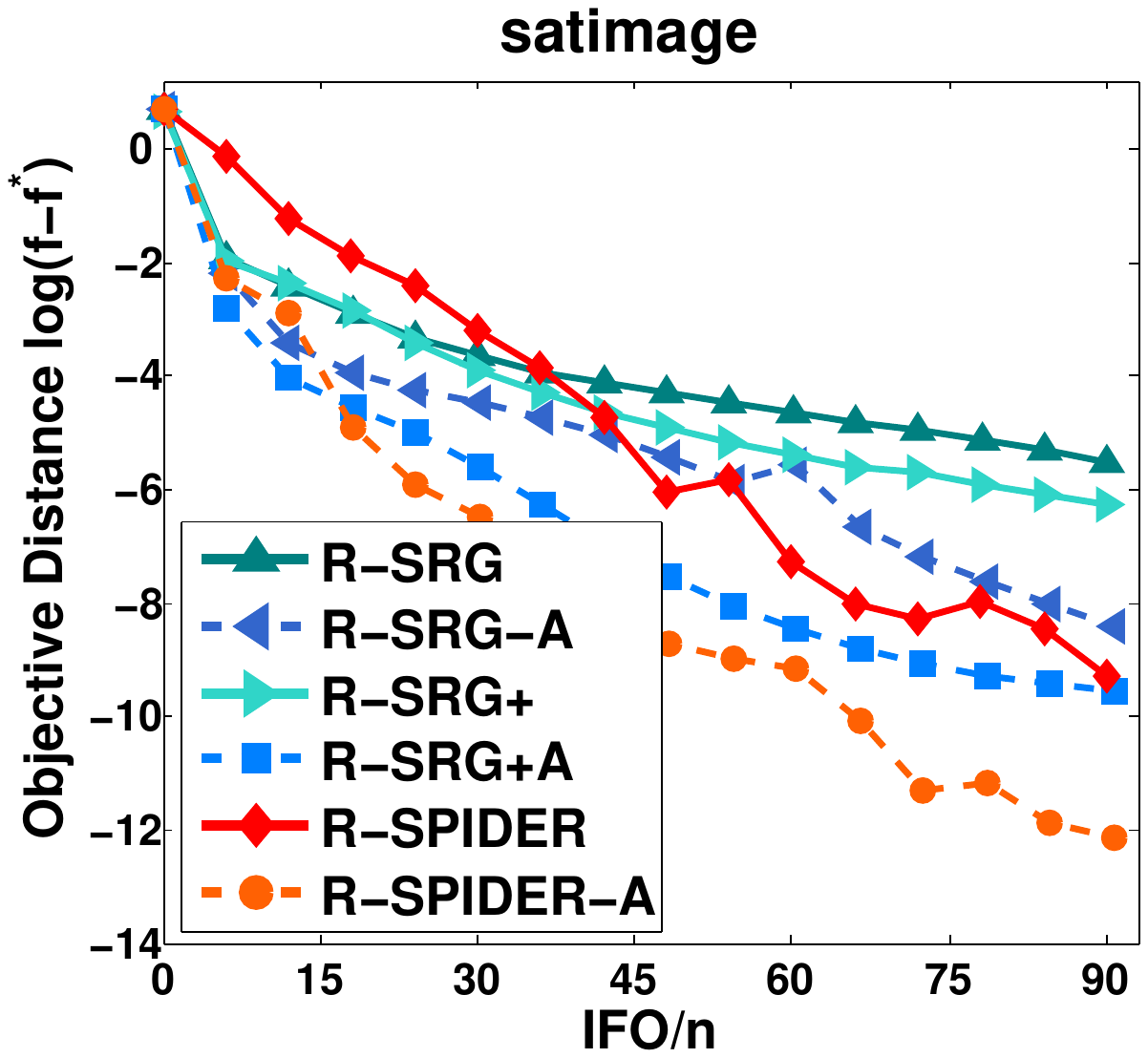} &
\includegraphics[width=0.245\linewidth]{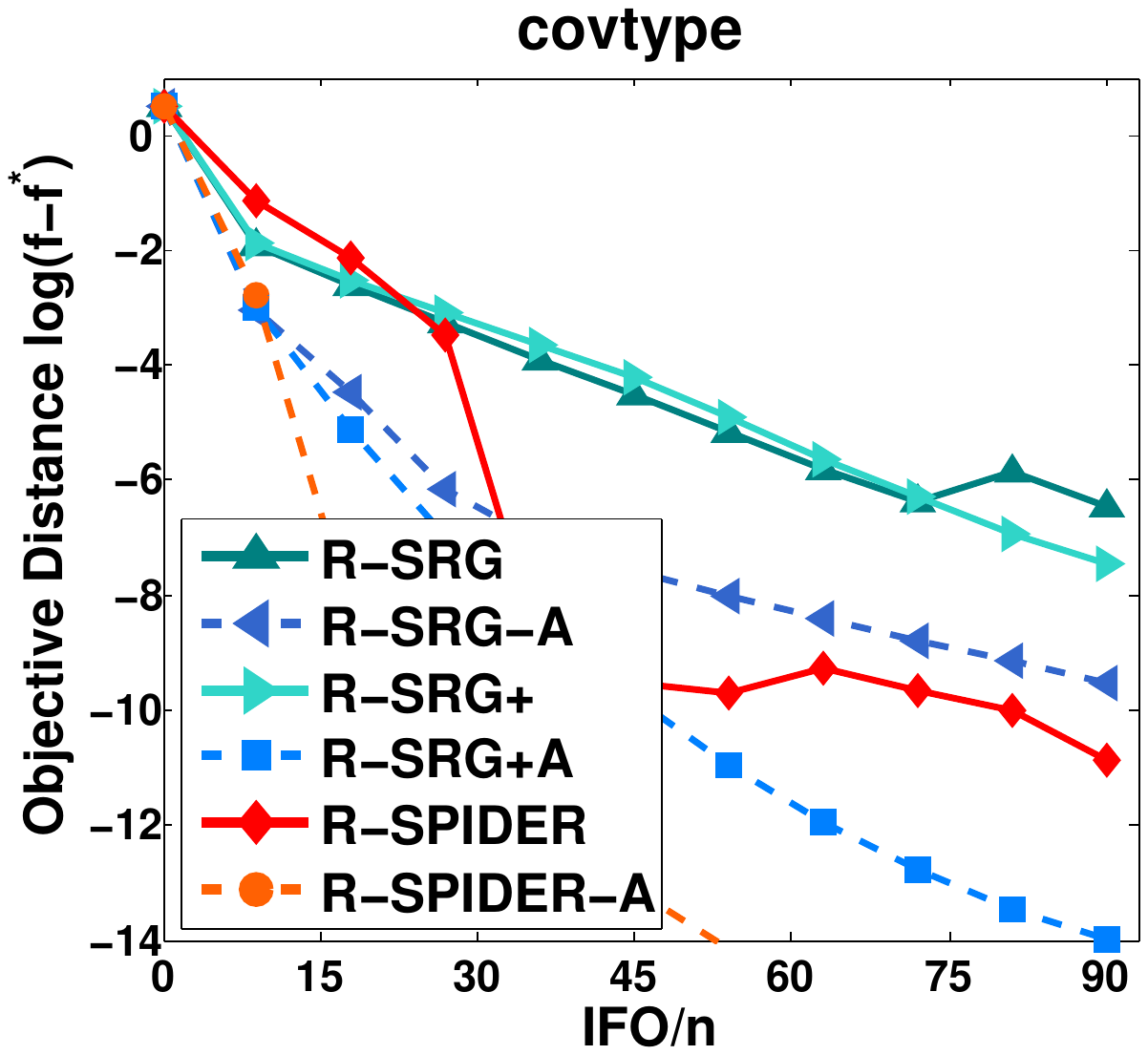}&
\includegraphics[width=0.245\linewidth]{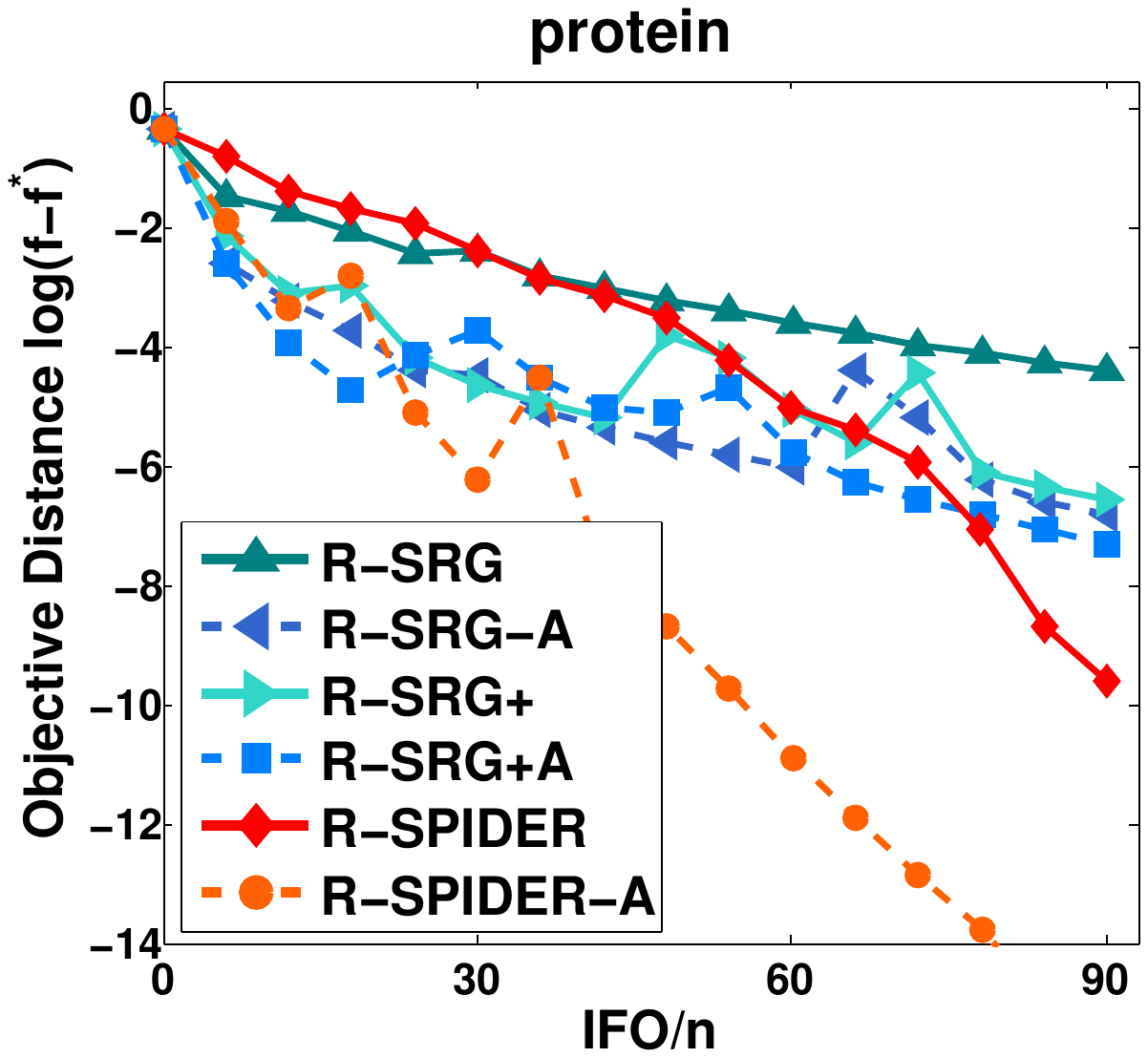}
\end{tabular}
\end{center}
\vspace{-1.4em}
\caption{Comparison between R-SPIDER and R-SRG with adaptive learning rates on $k$-PCA problem. }
\vspace{-0.8em}
\label{compariso46543fdwith}
\end{figure*}

\begin{figure*}[ht]
\begin{center}
\setlength{\tabcolsep}{0.8pt} 
\begin{tabular}{cccc}
\includegraphics[width=0.245\linewidth]{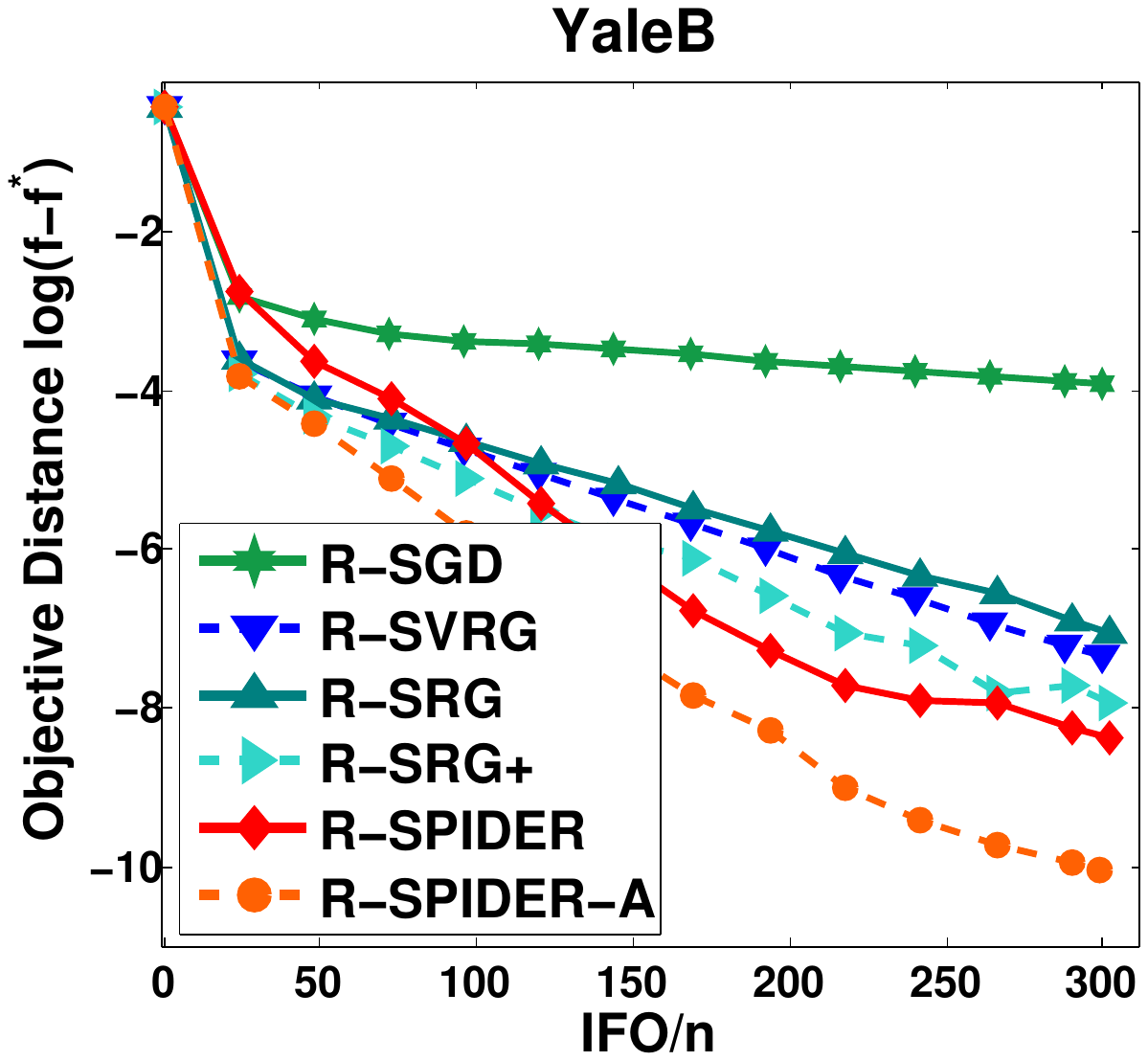}&
\includegraphics[width=0.245\linewidth]{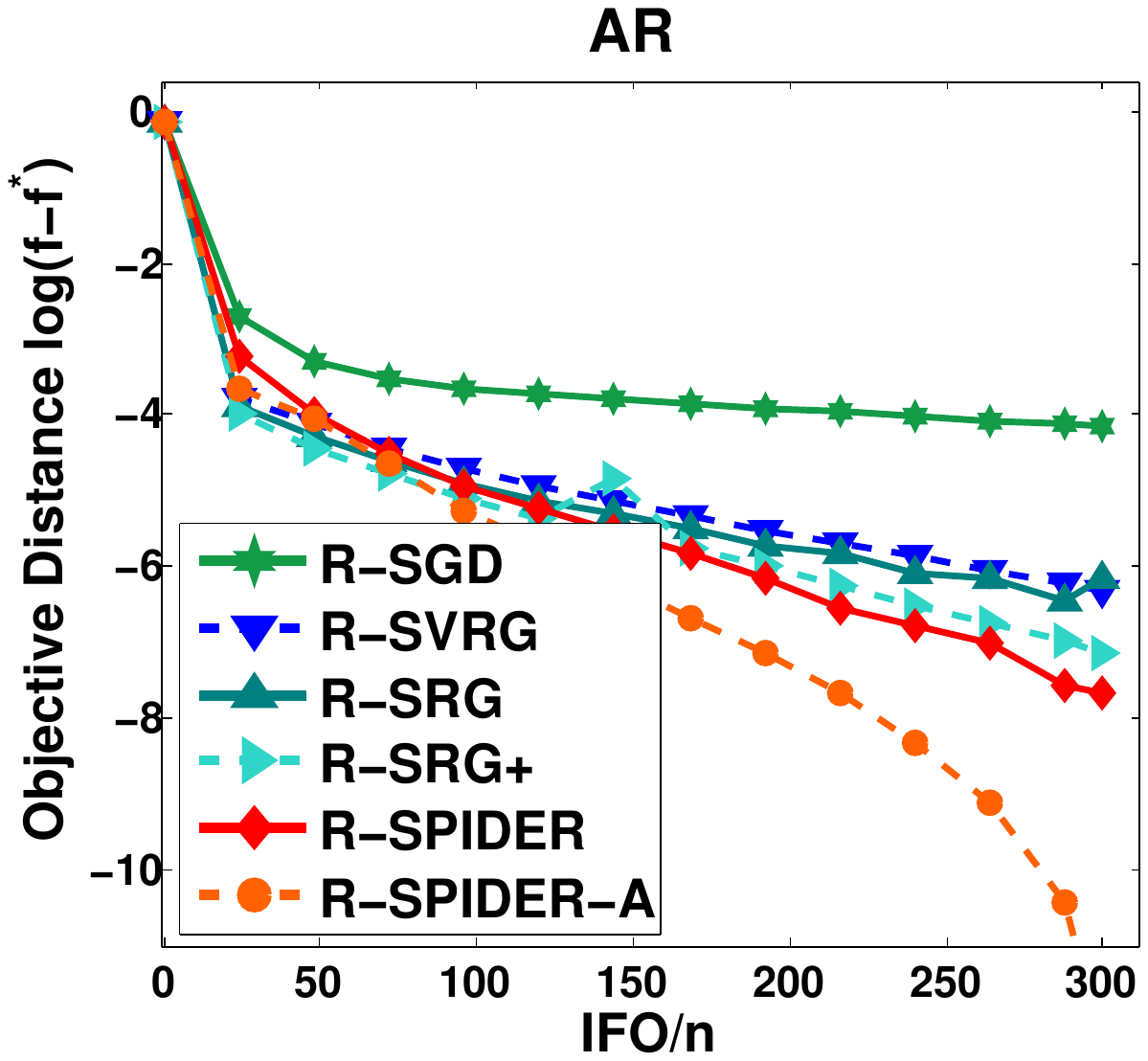}&
\includegraphics[width=0.245\linewidth]{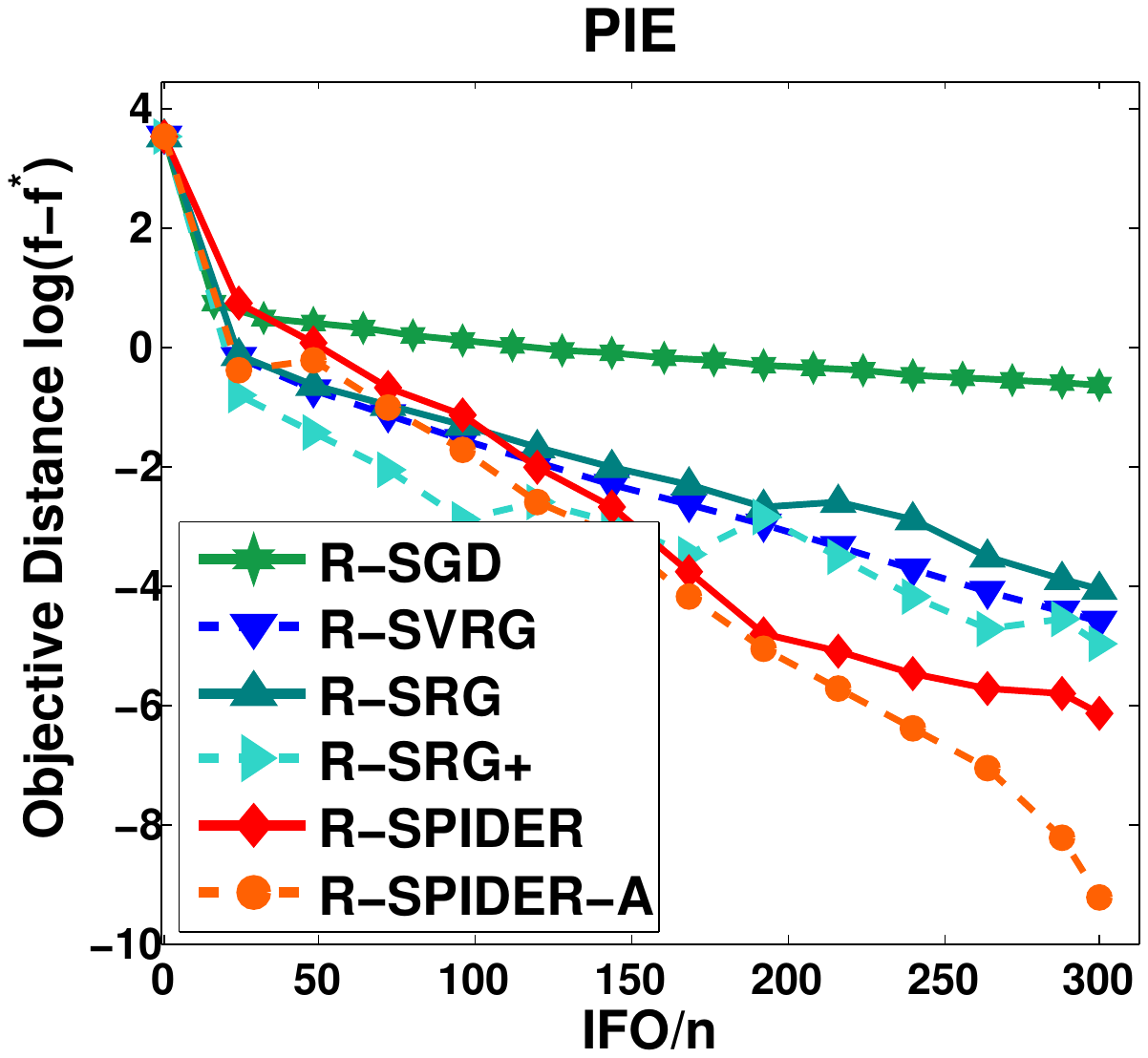}&
\includegraphics[width=0.245\linewidth]{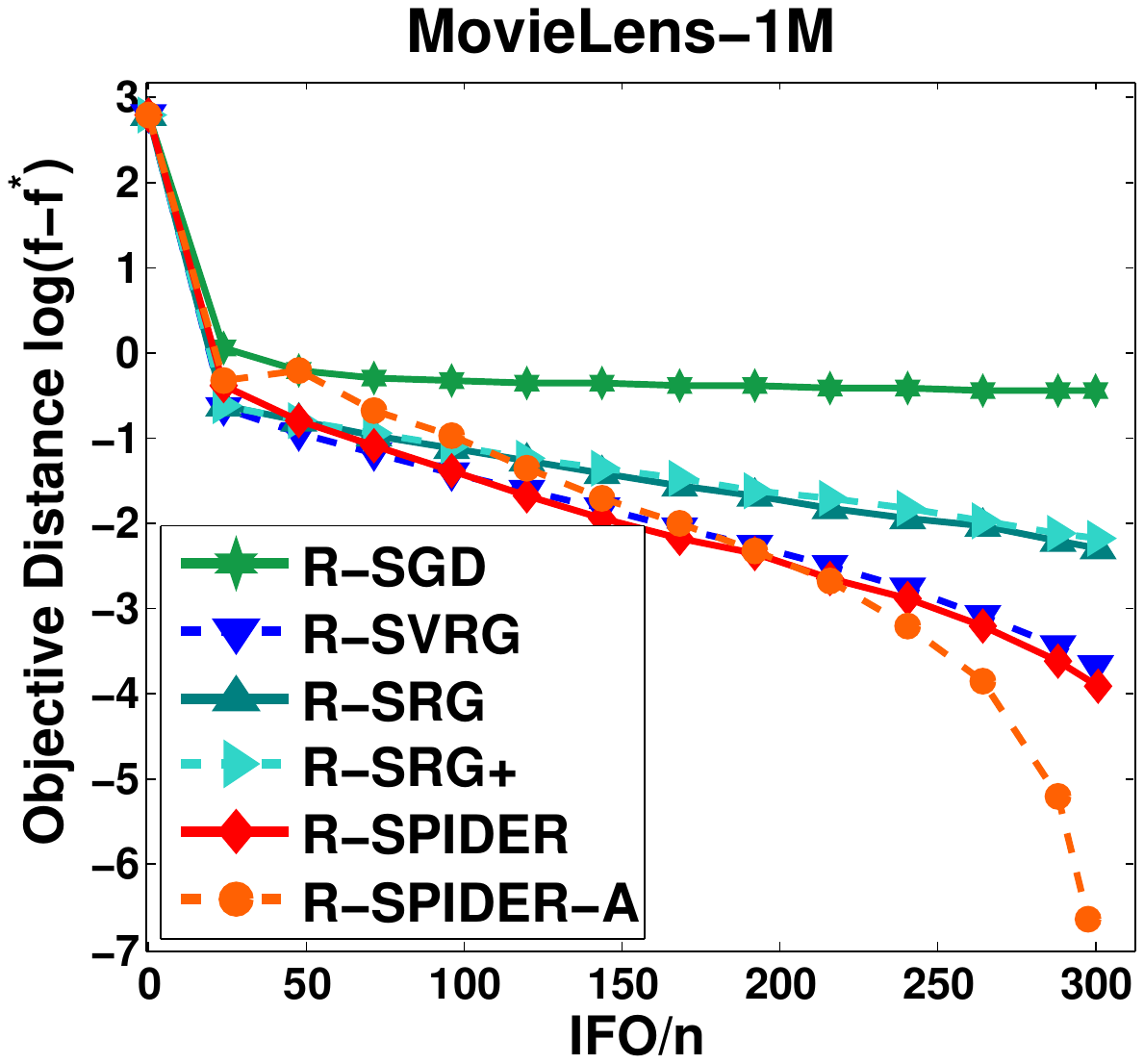}
\end{tabular}
\end{center}
\vspace{-1.4em}
\caption{Comparison among Riemannian stochastic gradient algorithms on  low-rank matrix completion problem. } \label{comparfsfasfwr}
\vspace{-1.1em}
\end{figure*}

\textbf{Evaluation on the $k$-PCA problem.} Given $n$ data points, $k$-PCA aims at computing their first $k$ leading eigenvectors, which is formulated as $  \min_{\Um\in\textsf{Gr}(k,d)} \frac{1}{n}\sum_{i=1}^{n} \ami{i}^{\top}\Um\Um^T\ami{i},$ 
where $\ami{i}\in\Rs{d}$ denotes the $i$-th sample vector and $\textsf{Gr}(k,d)\!=\!\{\Um\in \Rs{d\times k}\ | $ $\ \Um^\top\Um=\Imm\}$ denotes the Grassmann manifold. For this problem, we can directly obtain the ground truth $\Um^*$ by  using singular value decomposition (SVD), and then use $f(\Um^*)$ as optimal value $f^*$ for sub-optimality estimation in Figures~\ref{comparisonasfdasfdwith} and~\ref{compariso46543fdwith}. In this experiment, we compute the first ten leading eigenvectors.

From the experimental results in Figure~\ref{comparisonasfdasfdwith}, one can observe that our R-SPIDER-A converges significantly faster than other algorithms and R-SPIDER can also quickly converge to a relatively high accuracy, e.g. $10^{-8}$. In the initial epochs, R-SPIDER is comparable to other algorithms, showing relatively flat convergence behavior, mainly due to its very small learning rate and gradient normalization. Then along with more iterations, the computed solution becomes close to the optimum. Accordingly, the gradient begins to vanish and those considered algorithms without normalization tend to update the variable with small progress. In contrast, thanks to the normalization step, R-SPIDER moves more rapidly along the gradient descent direction and thus has sharper convergence curves. Meanwhile, R-SPIDER-A uses a relatively more aggressive learning rate in the initial epochs and decreases the learning rate along with more iterations. As a result, it exhibits the sharpest convergence behavior. On \textsf{epsilon} and \textsf{ijcnn} datasets (the bottom of Figure~\ref{comparisonasfdasfdwith}) we further plot the sub-optimality versus running-time curves. The main observations from this group of curves are consistent with those of IFO complexity, implying that the IFO complexity can comprehensively reflect the overall computational performance of a first-order Riemannian algorithm.  See Figure~\ref{comparfsfas23safsafwr} in Appendix~\ref{algorithmrunningttimecomparison} for more experimental results on running time comparison.

In Figure~\ref{compariso46543fdwith}, we compare R-SPIDER-A more closely with R-SRG-A and R-SRG+A which are respectively the counterparts of R-SRG and R-SRG+ with adaptive learning rate $\eta_k=\alpha(1+\alpha \lambda_{\alpha}\lfloor \frac{k}{p}\rfloor)$~\cite{kasai2018riemannian}. Here $\alpha$ and $\lambda_{\alpha}$ are tunable hyper-parameters. From the results, one can observe that the algorithm with adaptive learning rate usually outperforms the vanilla counterpart, which demonstrates the effectiveness of such an implementation trick. Moreover, R-SPIDER-A is   consistently superior to   R-SRG-A and R-SRG+A. See Figure~\ref{comparsdfwe45twetgfasfwr} in Appendix~\ref{moreexperimentada} for more results in this line.

\textbf{Evaluation on the low-rank matrix completion (LRMC) problem.}  Given a low-rank incomplete observation $\Am\in\Rs{d\times n}$, the LRMC problem aims at exactly recovering $\Am$. The mathematical formulation is $\min_{\Um\in\textsf{Gr}(k,d),\Gm\in\Rs{k\times n}}\ \|\PP_{\Omega}(\Am) - \PP_{\Omega}(\Um\Gm) \|^2$, where  the set $\Omega$ of locations corresponds to the observed entries, namely $(i,j)\in \Omega$ if $\Am_{ij}$ is observed. $\PP_{\Omega}$ is a linear operator that extracts entries in $\Omega$ and fills
the entries not in $\Omega$ with zeros. The LRMC problem can be expressed equivalently as
$\min_{\Um\in\textsf{Gr}(k,d),\Gm_i\in\Rs{k}}\ \frac{1}{n}\sum_{i=1}^{n} \|\PP_{\Omega_i}(\Am_i) - \PP_{\Omega_i}(\Um\Gm_i) \|^2.$
Since there is no ground truth for the optimum, we run Riemannian GD sufficiently long until the gradient satisfies $\|\nabla f(\xm)\|/\|\xm\|\leq 10^{-8}$, and then use the output as an approximate optimal value $f^*$ for sub-optimality estimation in Figure~\ref{comparfsfasfwr}. We test the considered algorithms on \textsf{YaleB}, \textsf{AR}, \textsf{PIE} and \textsf{MovieLens-1M}, considering these data approximately lie on a union of low-rank subspaces~\cite{candes2011robust,kasai2018riemannian}. For face images, we randomly sample $30\%$ pixels in each image as the observations and set $k=30$. For \textsf{MovieLens-1M}, we use its one million ratings for 3,952 movies from 6,040 users as the observations and set $k=100$.
 	
From Figure~\ref{comparfsfasfwr}, R-SPIDER-A and R-SPIDER show very similar behaviors as those in Figure~\ref{comparisonasfdasfdwith}. More specifically, R-SPIDER-A achieves fastest convergence rate, and R-SPIDER has similar convergence speed as other algorithms in the initial epochs and then runs faster along with more epochs. All these results confirm the superiority  of R-SPIDER and R-SPIDER-A.

\vspace{-0.5em}
\section{Conclusions}\label{conclusions}
\vspace{-0.3em}
We proposed R-SPIDER, which is an efficient Riemannian gradient method for non-convex stochastic optimization on Riemannian manifolds. Compared to  existing first-order Riemannian algorithms, R-SPIDER enjoys provably lower computational complexity bounds for finite-sum minimization. For online optimization, similar non-asymptotic bounds are established for R-SPIDER, which to our best knowledge has not been addressed in previous study. For the special case of gradient dominated functions, we further developed a variant of R-SPIDER with improved linear rate of convergence. Numerical results confirm the computational superiority of R-SPIDER over the state-of-the-arts.

\medskip
{\small
\bibliographystyle{unsrt}
\bibliography{referen}
}

\onecolumn
\aistatstitle{Faster First-Order Methods for Stochastic Non-Convex Optimization
on Riemannian Manifolds \\
(Supplementary File)
}
\author{Pan Zhou\\
National University of Singapore\\
{\tt\small panzhou3@gmail.com}
\and
Jiashi Feng\\
National University of Singapore\\
{\tt\small elefjia@nus.edu.sg}
}

\appendix

\begin{quote}
This supplementary document contains the technical proofs of convergence results and some additional numerical results of the manuscript entitled ``Faster First-Order Methods for Stochastic Non-convex Optimization on Riemannian Manifolds''. It is structured as follows. The proof of the key lemma, namely Lemma~\ref{lemma1} in Section~\ref{convegrenceanalysis}, is presented in Appendix~\ref{proofofauxiliarylemma}. Then  Appendix~\ref{proofofobjectivefinite} provides the proofs of the main results for general finite-sum non-convex problems in Section~\ref{convegrenceanalysis}, including Theorem~\ref{objectivefinite} and Corollary~\ref{complexityfinite}.   Next, Appendix~\ref{proofofobjection} gives the proof of the results for online setting, including Theorem~\ref{objective} and Corollary~\ref{complexityonline}. For gradient dominated results in Section~\ref{gradientdominatedfunctions}, including Theorems~\ref{totalcomplexityofconvex} and~\ref{totalcomplexityofconvex2}, are given in Appendix~\ref{proofoffinitegradient}. Finally, the detailed descriptions of datasets and more experimental results  are provided in Appendix~\ref{append:more_experiment}.
\end{quote}

\section{Proofs of Lemma~\ref{lemma1}}\label{proofofauxiliarylemma}
Before proving Lemma~\ref{lemma1}, we first present an useful lemma from~\cite{fang2018spider}. Let $Q(\xm)$ denote arbitrary determinstic vector and  $\xi_k(\xmi{0:k})$ denote the unbiased estimate $Q(\xmi{k})-Q(\xmi{k-1})$. Namely, $\EE[\xi_k(\xmi{0:k})| \xmi{0:k}]=Q(\xmi{k})-Q(\xmi{k-1})$. Then we aim to use the stochastic differential estimate to approximate $Q(\xmi{k})$ as follows:
\begin{equation*}
\tilde{Q} (\xmi{k})=\tilde{Q} (\xmi{0})+\sum_{i=1}^k \xi_i(\xmi{0:i}),
\end{equation*}
where $\tilde{Q} (\xmi{0})$ is the estimation of $Q(\xmi{0})$.
\begin{lem}\cite{fang2018spider}\label{lemma2}
	For any vector $h$, we have
	\begin{equation*}
	\EE \|\tilde{Q} (\xmi{k})-Q(\xmi{k})\|^2 \leq 	\EE \|\tilde{Q} (\xmi{0})-Q(\xmi{0})\|^2 +  \sum_{i=1}^{k}	\EE \|\xi_i(\xmi{0:i})-(Q(\xmi{i})-Q(\xmi{i-1}))\|^2.
	\end{equation*}
\end{lem}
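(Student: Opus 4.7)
The plan is to rewrite the estimator error as an initial residual plus a sum of martingale differences, and then kill every cross term by conditional orthogonality. First I would introduce the shorthand
\[
e_i := \tilde Q(\xmi{i}) - Q(\xmi{i}), \qquad d_i := \xi_i(\xmi{0:i}) - \bigl(Q(\xmi{i}) - Q(\xmi{i-1})\bigr).
\]
Telescoping the definition $\tilde Q(\xmi{k}) = \tilde Q(\xmi{0}) + \sum_{i=1}^k \xi_i(\xmi{0:i})$ against the trivial identity $Q(\xmi{k}) = Q(\xmi{0}) + \sum_{i=1}^k (Q(\xmi{i}) - Q(\xmi{i-1}))$ immediately yields $e_k = e_0 + \sum_{i=1}^k d_i$.

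Next I would equip the process with the natural filtration $\{\F_i\}$ generated by the trajectory $\xmi{0:i}$, the initial estimator $\tilde Q(\xmi{0})$, and the sampling randomness of $\xi_{1:i}$. In any SPIDER-style scheme the iterate $\xmi{i}$ is a deterministic function of information available at time $i-1$, so $\xmi{i}$ is $\F_{i-1}$-measurable. The standing assumption $\EE[\xi_i(\xmi{0:i}) \mid \xmi{0:i}] = Q(\xmi{i}) - Q(\xmi{i-1})$ combined with the tower property then gives $\EE[d_i \mid \F_{i-1}] = 0$, so $\{d_i\}$ is a martingale difference sequence adapted to $\{\F_i\}$, and $e_0$ is $\F_0$-measurable.

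Then I would expand the square,
\[
\|e_k\|^2 = \|e_0\|^2 + \sum_{i=1}^k \|d_i\|^2 + 2\sum_{i=1}^k \langle e_0, d_i\rangle + 2\sum_{1\le i<j\le k}\langle d_i, d_j\rangle,
\]
take expectations, and kill every cross term via the tower property: for any $i<j$, since $d_i$ is $\F_{j-1}$-measurable,
\[
\EE\langle d_i, d_j\rangle = \EE\bigl\langle d_i,\ \EE[d_j\mid \F_{j-1}]\bigr\rangle = 0,
\]
and likewise $\EE\langle e_0, d_i\rangle = 0$ because $e_0 \in \F_0 \subseteq \F_{i-1}$. What remains is precisely the claimed bound (and in fact its equality version).

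The only delicate point is the measurability bookkeeping: one must check that $\xmi{i}$ is genuinely $\F_{i-1}$-measurable, so that the conditional unbiasedness $\EE[\xi_i(\xmi{0:i}) \mid \xmi{0:i}] = Q(\xmi{i}) - Q(\xmi{i-1})$ transfers by tower to $\EE[\xi_i \mid \F_{i-1}]$, and that all randomness in $\tilde Q(\xmi{0})$ is absorbed into $\F_0$. Both are automatic in the SPIDER/R-SPIDER setting, where the snapshot batch and per-step mini-batches are drawn independently and the iterate update is a deterministic function of the past gradient estimate; so after these verifications the argument is a one-line martingale Pythagorean calculation.
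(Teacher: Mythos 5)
Your proof is correct and is essentially the standard argument: the paper itself does not prove Lemma~\ref{lemma2} (it is cited from~\cite{fang2018spider}), but the same martingale-orthogonality idea — expand the square and kill the cross terms because the conditional expectation of each increment error vanishes — is exactly what the paper uses in its one-step recursive proof of Lemma~\ref{lemma3}, of which your telescoped expansion is just the unrolled version. The measurability bookkeeping you flag is the right thing to check, and your argument in fact yields the equality version of the stated inequality, consistent with the source.
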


Let $\A_i$ map any vector $\xm$ to a random vector esimate $\A_i(\xm)$ such that
\begin{equation}
\EE [\A_i(\xmi{k})-\A_i(\xmi{k-1}) | \xmi{0:k}] = \V_k -\V_{k-1},
\end{equation}
where $\V_k$ is defined below.
Assume $\A_{\SSm}=\frac{1}{|\SSm|} \sum_{i\in \SSm} \A_i$ where $\SSm$ denote the sampled data of sample number $|\SSm|$. Besides, $\A_i$ satisfies
\begin{equation*}
\EE_i\| \A_i(\xm) -\A_i(\ym)\|_2^2 \leq L^2 \|\iExp{\xm}{\ym}\|^2.
\end{equation*}
Then we define $\V_k=\A_{\SSm}(\xmi{k})-\A_{\SSm}(\xmi{k-1})+\V_{k-1}$ and $\V_0$ is the estimate of $\A(\xmi{0})$.  Based on Lemma~\ref{lemma2}, we can further conclude:
\begin{lem}\label{lemma3}
	Assume $\dis{\xmi{k-1}}{\xmi{k}}=\|\iExp{\xmi{k}}{\xmi{k-1}}\|= \rho_{k-1}$.	Then we have
	\begin{equation}
	\EE \|\V_{k} -\A(\xmi{k})\|^2 \leq  \EE \|\V_0-\A(\xmi{0})\|^2+ L^2 \sum_{i=1}^{t}\mathbb{I}_{\left\{|\SSm_i|<n\right\}} \frac{\rho_{i-1}^2}{|\SSm_i|}.
	\end{equation}
\end{lem}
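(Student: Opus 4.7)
The plan is to apply Lemma~\ref{lemma2} with the identifications $Q(\xm)=\A(\xm)$, $\tilde Q(\xmi{k})=\V_k$, and $\xi_i(\xmi{0:i})=\A_{\SSm_i}(\xmi{i})-\A_{\SSm_i}(\xmi{i-1})$, and then bound the per-step increment variance $\EE\|\xi_i-(\A(\xmi{i})-\A(\xmi{i-1}))\|^2$ using the Lipschitz-in-expectation condition together with the independence of samples inside $\SSm_i$.

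First, I would verify that the stated identifications fit the framework of Lemma~\ref{lemma2}. By the assumption $\EE[\A_i(\xmi{k})-\A_i(\xmi{k-1})\mid\xmi{0:k}]=\V_k-\V_{k-1}$ combined with the definition $\V_k=\A_{\SSm_k}(\xmi{k})-\A_{\SSm_k}(\xmi{k-1})+\V_{k-1}$, one checks that $\EE[\xi_i\mid\xmi{0:i}]=\A(\xmi{i})-\A(\xmi{i-1})$, so $\xi_i$ is indeed an unbiased differential estimator of $\A(\xmi{i})-\A(\xmi{i-1})$ in the sense required by Lemma~\ref{lemma2}. Telescoping then gives $\V_k=\V_0+\sum_{i=1}^{k}\xi_i$, exactly of the form to which Lemma~\ref{lemma2} applies, yielding
\begin{equation*}
\EE\|\V_k-\A(\xmi{k})\|^2 \;\leq\; \EE\|\V_0-\A(\xmi{0})\|^2+\sum_{i=1}^{k}\EE\bigl\|\xi_i-(\A(\xmi{i})-\A(\xmi{i-1}))\bigr\|^2.
\end{equation*}

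Next I would bound each term in the sum. For a mini-batch $\SSm_i$ of independent (with-replacement) samples, $\xi_i$ is the sample mean of $|\SSm_i|$ i.i.d.\ centered (after subtraction) summands, so
\begin{equation*}
\EE\bigl\|\xi_i-(\A(\xmi{i})-\A(\xmi{i-1}))\bigr\|^2 \;\leq\; \frac{1}{|\SSm_i|}\,\EE_j\bigl\|\A_j(\xmi{i})-\A_j(\xmi{i-1})\bigr\|^2 \;\leq\; \frac{L^2\rho_{i-1}^2}{|\SSm_i|},
\end{equation*}
where the last inequality uses $\EE_j\|\A_j(\xm)-\A_j(\ym)\|^2\leq L^2\|\iExp{\xm}{\ym}\|^2$ together with the assumption $\|\iExp{\xmi{i}}{\xmi{i-1}}\|=\rho_{i-1}$. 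When $|\SSm_i|=n$ the mini-batch already coincides with the full dataset, so $\A_{\SSm_i}=\A$ and the increment is exact, giving a zero contribution; multiplying by $\mathbb{I}_{\{|\SSm_i|<n\}}$ captures both cases uniformly. Substituting the per-step bound into the Lemma~\ref{lemma2} inequality yields the claim.

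The main delicate point, rather than a deep obstacle, is that the Lipschitz bound provided in the hypothesis is expressed in terms of the inverse exponential map $\iExp{\xmi{i}}{\xmi{i-1}}$ on the manifold rather than in a linear space; once one recognises that $\A_j(\xmi{i})-\A_j(\xmi{i-1})$ already lives in a single ambient register (as it does in the way $\V_k$ is being used within the paper) and that the Riemannian distance $\rho_{i-1}$ is what drives the Lipschitz constant, the bound goes through cleanly. A secondary care-point is verifying that the variance-of-the-mean inequality indeed holds for with-replacement mini-batches drawn independently, which is standard.
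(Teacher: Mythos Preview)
Your proposal is correct and follows essentially the same approach as the paper. The only cosmetic difference is that you invoke Lemma~\ref{lemma2} as a black box to obtain the telescoped variance decomposition, whereas the paper re-derives the one-step recursion $\EE\|\V_k-\A(\xmi{k})\|^2\leq \EE\|\V_{k-1}-\A(\xmi{k-1})\|^2+\frac{L^2\rho_{k-1}^2}{|\SSm_k|}$ directly by expanding the square and killing the cross term via conditioning; the per-step variance bound and the handling of the $|\SSm_i|\geq n$ case are identical.
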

\begin{proof}

	The proof here mimics that of Lemma 4 in~\cite{fang2018spider}.  For completeness, we provide the proof. Assume for the $k$-th sampling, the seleced sample set is denoted by $\SSm_k$. Then, we have
\begin{equation*}
\begin{split}
\EE \|\V_{k}-\A(\xmi{k})\|^2 = & \EE \|\A_{\SSm_k}(\xmi{k})-\A_{\SSm_k}(\xmi{k-1})+\V_{k-1}-\A(\xmi{k})\|^2\\
 = & \EE \|\A_{\SSm_k}(\xmi{k})-\A_{\SSm_k}(\xmi{k-1}) -\A(\xmi{k})+\A(\xmi{k-1})+\V_{k-1}-\A(\xmi{k-1})\|^2\\
 = & \EE \|\A_{\SSm_k}(\xmi{k})-\A_{\SSm_k}(\xmi{k-1})-\A(\xmi{k})+\A(\xmi{k-1})\|^2+\EE\|\V_{k-1}-\A(\xmi{k-1})\|^2\\
  &+ \EE \langle \A_{\SSm_k}(\xmi{k})-\A_{\SSm_k}(\xmi{k-1})-\A(\xmi{k})+\A(\xmi{k-1}), \V_{k-1}-\A(\xmi{k-1})\rangle\\
 \lee{172} & \EE \|\A_{\SSm_k}(\xmi{k})-\A_{\SSm_k}(\xmi{k-1}) -\A(\xmi{k})+\A(\xmi{k-1})\|^2+\EE\|\V_{k-1} -\A(\xmi{k-1})\|^2\\
= & \frac{1}{|\SSm_k|} \EE \|\A_{i}(\xmi{k})-\A_{i}(\xmi{k-1})-\A(\xmi{k})+\A(\xmi{k-1})\|^2+\EE\|\V_{k-1}-\A(\xmi{k-1})\|^2\\
\led{173} & \frac{1}{|\SSm_k|} \EE \|\A_{i}(\xmi{k})-\A_{i}(\xmi{k-1})\|^2+\EE\|\V_{k-1}-\A(\xmi{k-1})\|^2\\
\led{174} & \frac{L^2}{|\SSm_k|}\|\iExp{\xmi{k}}{\xmi{k-1}}\|^2+\EE\|\V_{k-1}-\A(\xmi{k-1})\|^2\\
\leq & \frac{L^2\rho_{k-1}^2}{|\SSm_k|}+\EE\|\V_{k-1}-\A(\xmi{k-1})\|^2,
\end{split}
\end{equation*}	
where \ding{172} holds since $\EE \langle \A_{\SSm_k}(\xmi{k})-\A_{\SSm_k}(\xmi{k-1})-\A(\xmi{k})+\A(\xmi{k-1}), \V_{k-1}-\A(\xmi{k-1})\rangle=\0$ in which the expectation is taken on the random set $\SSm_k$ ($\V_{k-1}-\A(\xmi{k-1})$ is constant); \ding{173} holds due to $\EE\|\xm-\EE(\xm)\|^2\leq \EE\|\xm\|^2$; \ding{174} holds since $f_i(\xm)$ is $L$-gradient Lipschitz, namely $\EE_i \|\nabla f_i(\xm) -\Tran{\ym}{\xm}{\nabla f_i(\ym)}\|_2^2 \leq L \|\iExp{\xm}{\ym}\|^2.$ Notice, when $|\SSm_k|\geq n$, in \ding{172}, we have $\EE \|\A_{\SSm_k}(\xmi{k})-\A_{\SSm_k}(\xmi{k-1}) -\A(\xmi{k})+\A(\xmi{k-1})\|^2+\EE\|\V_{k-1} -\A(\xmi{k-1})\|^2=0$. In this case, we can obtain $\EE \|\V_{k}-\A(\xmi{k})\|^2= \EE \|\V_{k-1}-\A(\xmi{k-1})\|^2$. Therefore, consider these two cases and sum up $k=0, 1,\cdots, t$, we have
	\begin{equation*}
\EE \|\V(\xmi{t})-\A(\xmi{t})\|^2 \leq \EE \|\V_0-\A(\xmi{0})\|^2+ L^2 \sum_{i=1}^{t}\mathbb{I}_{\left\{|\SSm_i|<n\right\}} \frac{\rho_{i-1}^2}{|\SSm_i|}.
\end{equation*}
The proof is completed.
\end{proof}

\begin{lem}\label{lemma22}Suppose Assumptions~\ref{GL} and~\ref{boundedgradient} hold. Let $k_0=\lfloor k/p\rfloor $ and $\widetilde{k}_0=k_0 p$. Assume that for $k=\lfloor k/p\rfloor \cdot p$, we select $|\SSm_1|$ samples to estimate $\vmi{k}$ and for $k\neq \lfloor k/p\rfloor \cdot p$, we select $|\SSm_{2,k}|$ samples to estimate $\vmi{k}$.  Then the estimation error between the full Riemannian gradient $\nabla f(\xmi{k})$ and its estimate $\vmti{k}$ in Algorithm~\ref{algmanifold} is bounded as
		\begin{equation*}
\begin{split}
&\EE \left[\|\vmti{k}-\nabla f(\xmi{k})\|^2\ |\ \xmi{\widetilde{k}_0},\cdots,\xmi{\widetilde{k}_0+p-1} \right]\leq \mathbb{I}_{\left\{|\SSm_1|<n\right\}} \frac{\sigma^2}{|\SSm_1|} +  L^2\sum_{i=\widetilde{k}_0}^{\widetilde{k}_0+p-1} \mathbb{I}_{\left\{|\SSm_{2,i+1}|<n\right\}} \frac{\diss{\xmi{i}}{\xmi{i+1}}}{|\SSm_{2,i+1}|},
\end{split}
	\end{equation*}
where $\dis{\xmi{i}}{\xmi{i+1}}$ is the distance between $\xmi{i}$ and $\xmi{i+1}$.
\end{lem}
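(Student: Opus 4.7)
The plan is to reduce Lemma~\ref{lemma22} to a Riemannian analogue of the Euclidean SPIDER variance recursion already captured in Lemma~\ref{lemma3}. The two properties that make this reduction clean are (i) parallel transport is a linear isometry between tangent spaces, so it preserves norms and commutes with linear combinations and conditional expectation, and (ii) each mini-batch gradient $\fsi{1},\fsi{2}$ is an unbiased estimator of $\nabla f$.

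First I would dispatch the base step $k = \widetilde{k}_0$. Here the algorithm sets $\vmti{k} = \nabla f_{\SSm_1}(\xmi{k})$, and since $\SSm_1$ is sampled with replacement from the $n$ components, Assumption~\ref{boundedgradient} gives $\EE\bigl[\|\vmti{\widetilde{k}_0} - \nabla f(\xmi{\widetilde{k}_0})\|^2 \,\big|\, \xmi{\widetilde{k}_0}\bigr] \leq \sigma^2/|\SSm_1|$ whenever $|\SSm_1| < n$, and the error is identically zero otherwise. This contributes exactly the first term on the right-hand side via the indicator $\mathbb{I}_{\{|\SSm_1| < n\}}$.

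Next I would establish a one-step recursive inequality for $k > \widetilde{k}_0$. Letting $\bm{e}_k := \vmti{k} - \nabla f(\xmi{k}) \in \Tgi{\xmi{k}}\M$ and using the update rule for $\vmti{k}$ together with linearity of parallel transport, I decompose
\begin{equation*}
\bm{e}_k = \bigl[\fsi{2}(\xmi{k}) - \Para{\xmi{k-1}}{\xmi{k}}{\fsi{2}(\xmi{k-1})}\bigr] - \bigl[\nabla f(\xmi{k}) - \Para{\xmi{k-1}}{\xmi{k}}{\nabla f(\xmi{k-1})}\bigr] + \Para{\xmi{k-1}}{\xmi{k}}{\bm{e}_{k-1}}.
\end{equation*}
Conditional on $\xmi{\widetilde{k}_0},\cdots,\xmi{k}$, the sum of the first two bracketed terms is mean-zero (by unbiasedness of $\fsi{2}$ and linearity of $\Para{\xmi{k-1}}{\xmi{k}}{\cdot}$), while $\Para{\xmi{k-1}}{\xmi{k}}{\bm{e}_{k-1}}$ is deterministic given this history. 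Hence the cross term vanishes in expectation, yielding
\begin{equation*}
\EE \|\bm{e}_k\|^2 \leq \EE \bigl\|\fsi{2}(\xmi{k}) - \Para{\xmi{k-1}}{\xmi{k}}{\fsi{2}(\xmi{k-1})} - \nabla f(\xmi{k}) + \Para{\xmi{k-1}}{\xmi{k}}{\nabla f(\xmi{k-1})}\bigr\|^2 + \EE \|\bm{e}_{k-1}\|^2,
\end{equation*}
where I used the isometry $\|\Para{\xmi{k-1}}{\xmi{k}}{\bm{e}_{k-1}}\| = \|\bm{e}_{k-1}\|$ on the second summand. A routine i.i.d.\ average-of-variances argument over the $|\SSm_{2,k}|$ samples in $\SSm_2$, combined with Assumption~\ref{GL}, bounds the first term by $L^2 \diss{\xmi{k-1}}{\xmi{k}}/|\SSm_{2,k}|$ when $|\SSm_{2,k}| < n$ and by $0$ when $\SSm_2$ covers the whole dataset, thereby producing the indicator $\mathbb{I}_{\{|\SSm_{2,k}|<n\}}$.

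Unrolling this one-step inequality from $\widetilde{k}_0$ up through the desired $k$ and combining with the base case yields the stated bound (extending the sum to the full cycle is then a cost-free upper bound since all terms are non-negative). The only substantive Riemannian ingredient is the transport identity $\|\Para{\xmi{k-1}}{\xmi{k}}{\bm{e}_{k-1}}\| = \|\bm{e}_{k-1}\|$, which decouples consecutive steps without paying any curvature-dependent cost and is what keeps the final bound free of the curvature parameter $\zeta$. The main obstacle is therefore bookkeeping rather than analysis: one must verify that every application of $\Para{}{}{\cdot}$ can be placed on the outside of the error vector before squaring, so that the computation reduces to the Euclidean SPIDER identity inside a single tangent space $\Tgi{\xmi{k}}\M$.
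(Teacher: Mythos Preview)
Your proposal is correct and captures the same core ideas as the paper, but the organization differs in an instructive way. The paper introduces an auxiliary sequence $\vmhi{t}$ in which every vector is parallel-transported into the \emph{single} tangent space $\Tgi{\xmi{k}}\M$, defining $\A_i(\xmi{t}) := \Para{\xmi{t}}{\xmi{k}}{\nabla f_i(\xmi{t})}$. Once everything lives in one linear space, the Euclidean SPIDER variance lemma (Lemma~\ref{lemma3}) applies verbatim, and the proof finishes by checking $\vmhi{k}=\vmti{k}$ at the terminal index. Your approach instead keeps $\bm e_k$ in its own tangent space and uses the isometry $\|\Para{\xmi{k-1}}{\xmi{k}}{\bm e_{k-1}}\| = \|\bm e_{k-1}\|$ at each step to unroll a one-step recursion. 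Both routes hinge on exactly the same two facts---isometry of parallel transport and unbiasedness of the mini-batch gradient---so neither is deeper than the other; the paper's device has the advantage of making the reduction to the Euclidean case a black-box citation of Lemma~\ref{lemma3}, while yours is more self-contained but re-derives that telescoping by hand.

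One small imprecision worth tightening: you write that $\Para{\xmi{k-1}}{\xmi{k}}{\bm e_{k-1}}$ is ``deterministic given this history'' where the history is the iterate sequence $\xmi{\widetilde{k}_0},\ldots,\xmi{k}$. In fact $\bm e_{k-1}$ also depends on the earlier mini-batch draws, not just the iterates. The cross-term still vanishes, but the clean justification is the tower property: condition first on the full filtration through step $k-1$ (which makes $\bm e_{k-1}$ measurable), take the inner expectation over $\SSm_{2,k}$ to kill the cross term, then take the outer expectation. This is exactly what the paper does when it notes that $\V_{k-1}-\A(\xmi{k-1})$ is ``constant'' under the expectation over $\SSm_k$.
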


\begin{proof}
		Here we construct an auxiliary sequence
	\begin{equation*}
	\begin{split}
	\vmhi{t}=&\sum_{i=1}^{t}\left( \Para{\xmi{i}}{\xmi{k}}{\fsi{2}(\xmi{i})}- \Para{\xmi{i-1}}{\xmi{k}}{\fsi{1}(\xmi{i-1})}\right)+ \Para{\xmi{0}}{\xmi{k}}{\fsi{1}(\xmi{0})}\\
	=& \Para{\xmi{t}}{\xmi{k}}{\fsi{2}(\xmi{t})}- \Para{\xmi{t-1}}{\xmi{k}}{\fsi{2}(\xmi{t-1})} +\vmhi{t-1},
	\end{split}
	\end{equation*}
	where $\xmi{k}$ is a given point and $\vmhi{0}=\Para{\xmi{0}}{\xmi{k}}{\fsi{1}(\xmi{0})}$. In this way, let $\A_{\SSm}(\xmi{t})=\Para{\xmi{t}}{\xmi{k}}{\fsii{2}{t}(\xmi{t})}$. Then we have $\vmhi{t}= \A_{\SSm}(\xmi{t})-\A_{\SSm}(\xmi{t-1})+\vmhi{t-1}$. 
	 Accordingly, we can obtain
	\begin{equation*}
	\begin{split}
	\EE_i\| \A_i(\xmi{t}) -\A_i(\xmi{t-1})\|_2^2=& \EE_i\left\|\Para{\xmi{t}}{\xmi{k}}{\fii{i}{}(\xmi{t})} -\Para{\xmi{t-1}}{\xmi{k}}{\fii{i}{}(\xmi{t-1})}\right\|^2\\
	=&\EE_i\left\|\Para{\xmi{t-1}}{\xmi{k}}{ \Para{\xmi{t}}{\xmi{t-1}}{\fii{i}{}(\xmi{t})}}-\Para{\xmi{t-1}}{\xmi{k}}{\fii{i}{}(\xmi{t-1})} \right\|^2\\
	=&\EE_i\left\|\Para{\xmi{t-1}}{\xmi{k}}{ \Para{\xmi{t}}{\xmi{t-1}}{\fii{i}{}(\xmi{t})}- \fii{i}{}(\xmi{t-1})} \right\|^2\\
	\lee{172}&\EE_i\left\|  \Para{\xmi{t}}{\xmi{t-1}}{\fii{i}{}(\xmi{t})}- \fii{i}{}(\xmi{t-1}) \right\|^2\\
	\leq & L^2 \|\iExp{\xmi{t-1}}{\xmi{t}}\|^2,
	\end{split}
	\end{equation*}
	where \ding{172} holds as the parallel transport $\Parass_{\xm}^{\ym}$ preserves the norm. 
	On the other hand, all $\A_i(\xmi{t})\ (t=0,\cdots, k)$ are located in the tangent space at the point $\xmi{k}$. Thus, Lemma~\ref{lemma3} is applicable to the sequence $\vmhi{t}$.
	
	Let $k_0=\lfloor K/p \rfloor$. For simplicity, we use $\V_0,\V_1,\cdots,\V_{k}$ to respectively denote $\V_{k_0},\V_{k_0+1},\cdots,\V_{k_0+k}$.
	 For $\V_0$, we have $\V_0=\Para{\xmi{k_0}}{\xmi{k}}{\nabla f_{\SSm_1}(\xmi{k_0})}$.  Then it yields
	\begin{equation*}
	\begin{split}
	\EE \|\V_0-\A(\xmi{0})\|^2= & \EE \|\Para{\xmi{k_0}}{\xmi{k}}{\nabla f_{\SSm_1}(\xmi{k_0})} - \Para{\xmi{k_0}}{\xmi{k}}{\nabla f(\xmi{k_0})} \|^2\\
	= & \EE \|\nabla f_{\SSm_1}(\xmi{k_0})-\nabla f(\xmi{k_0})\|^2\\
	=& \frac{1}{|\SSm_1|} \EE \|\nabla f_{i}(\xmi{k_0})-\nabla f(\xmi{k_0})\|^2\\
	\led{172} & \frac{\sigma^2}{|\SSm_1|},
	\end{split}
	\end{equation*}
	where \ding{172} holds since the gradient variance is bounded in Assumption~\ref{boundedgradient}.
	On the other hand, since $\xmi{k+1}= \Exp{\xmi{k}}{-\etai{k} \frac{\vmti{k}}{\|\vmti{k}\|}}$, we have
	\begin{equation*} \diss{\xmi{k+1}}{\xmi{k}}=\left\|\iExp{\xmi{k}}{\xmi{k+1}}\right\|.
	\end{equation*}
Therefore, we have
	\begin{equation*}
	\EE \|\vmhi{t}-\Para{\xmi{t}}{\xmi{k}}{\nabla f(\xmi{t})}\|^2 \leq  L^2 \sum_{i=0}^{t-1}\mathbb{I}_{\left\{|\SSm_{2,i+1}|<n\right\}} \frac{\diss{\xmi{i}}{\xmi{i+1}}}{|\SSm_{2,i+1}|} +  \frac{\sigma^2}{|\SSm_1|}.
	\end{equation*}
	By setting $t=k$ and noting $t\leq p$ for each epoch, we establish
	\begin{equation*}
	\EE \|\vmti{k}-\nabla f(\xmi{k})\|^2 =\EE \|\vmhi{k}-\Para{\xmi{k}}{\xmi{k}}{\nabla f(\xmi{k})}\|^2  \leq \frac{\sigma^2}{|\SSm_1|} +  L^2\sum_{i=k_0}^{k_0+p-1} \mathbb{I}_{\left\{|\SSm_{2,i+1}|<n\right\}} \frac{\diss{\xmi{i}}{\xmi{i+1}}}{|\SSm_{2,i+1}|}.
	\end{equation*}
	Notice, when we sample all $n$ samples, we have $\EE \|\V_0-\A(x_0)\|^2=0$ and thus
$$\EE \|\vmti{k}-\nabla f(\xmi{k})\|^2 \leq  L^2 \sum_{i=k_0}^{k_0+p-1}\mathbb{I}_{\left\{|\SSm_{2,i+1}|<n\right\}} \frac{\diss{\xmi{i}}{\xmi{i+1}}}{|\SSm_{2,i+1}|}.$$
So by combining the two case together, we can obtain the result in Lemma~\ref{lemma1}. The proof is completed.
\end{proof}

Now we are ready to prove Lemma~\ref{lemma1}.
\begin{proof}[\textbf{Proof of Lemma~\ref{lemma1}}]
To prove Lemma~\ref{lemma1}, we can directly set $|\SSm_{2,k}|$ in Lemma~\ref{lemma22} as $|\SSm_2|$ in Lemma~\ref{lemma1} and obtain the results in Lemma~\ref{lemma1}. The proof is completed.
\end{proof}

\section{Proof of the Results in Section~\ref{convegrenceanalysis}}
\subsection{Proof of Theorem~\ref{objectivefinite}}\label{proofofobjectivefinite}

\begin{proof}
For brevity, let $\etati{k}=\frac{\etai{k}}{\|\vmti{k}\|}$.
Then by using the $L$-gradient Lipschitz, we have
\begin{equation}\label{equtionloss}
\begin{split}
	f(\xmi{k+1})\leq& f(\xmi{k})+\langle \nabla f(\xmi{k}),\iExp{\xmi{k}}{\xmi{k+1}} \rangle+\frac{L}{2}\|\iExp{\xmi{k}}{\xmi{k+1}}\|^2\\
	\leq& f(\xmi{k})-\etati{k}\langle \nabla f(\xmi{k}), \vmti{k}\rangle+\frac{\etati{k}^2L}{2}\|\vmti{k}\|^2\\
	\leq& f(\xmi{k})-\etati{k}\langle \nabla f(\xmi{k})-\vmti{k}, \vmti{k}\rangle-\etati{k}\left(1-\frac{\etati{k}L}{2}\right)\|\vmti{k}\|^2\\
	\leq& f(\xmi{k})-\etati{k}\langle \nabla f(\xmi{k})-\vmti{k}, \vmti{k}\rangle-\etati{k}\left(1-\frac{\etati{k}L}{2}\right)\|\vmti{k}\|^2\\
	\leq& f(\xmi{k})+\frac{\etati{k}}{2}\| \nabla f(\xmi{k})-\vmti{k}\|^2  -\frac{\etati{k}}{2}\left(1-\etati{k}L\right)\|\vmti{k}\|^2.
\end{split}
\end{equation}

Since we have  $\xmi{k+1}= \iExp{\xmi{k}}{-\etai{k} \frac{\vmti{k}}{\|\vmti{k}\|}}$, we can obtain
\begin{equation} \label{distanceslabel}
\dis{\xmi{k+1}}{\xmi{k}}=\etai{k} = \min\left(\frac{\epsilon}{2Ln_0}, \frac{ \|\vmti{k}\|}{4L n_0}\right) \leq \frac{\epsilon}{2Ln_0}.
	\end{equation}

Now we consider the two cases: (1) $k$ is not an integer multiple of $p$; (2) $k$ is an integer multiple of $p$.  We can consider case (1) as follows. 	If $s=n$, then by Lemma~\ref{lemma1} and Eqn.~\eqref{distanceslabel}, we have
	\begin{equation*}
	\EE \|\vmti{k}-\nabla f(\xmi{k})\|^2 \leq \frac{L^2}{|\SSm_2|}\sum_{i=k_0}^{k_0+p-1}\diss{\xmi{i}}{\xmi{i+1}} \leq \frac{pL^2}{|\SSm_2|} \frac{\epsilon^2}{4L^2n_0^2}  = n_0 s^{\frac{1}{2}} L^2  \frac{n_0}{4 s^{\frac{1}{2}}} \frac{\epsilon^2 }{4 L^2n_0^2} =\frac{1}{16}\epsilon^2.
	\end{equation*}

If $s=\frac{16\sigma^2}{\epsilon^2}$, then  Lemma~\ref{lemma1} gives
\begin{equation}\label{smallvariance}
	\EE \|\vmti{k}-\nabla f(\xmi{k})\|^2 \leq \frac{pL^2}{|\SSm_2|} \frac{\epsilon^2 }{4 L^2n_0^2} +  \frac{\sigma^2}{|\SSm_1|}= n_0 s^{\frac{1}{2}} L^2 \frac{\epsilon^2 }{4 L^2n_0^2} \frac{  n_0}{4 s^{\frac{1}{2}}}+\frac{\epsilon^2}{16}=\frac{1}{8}\epsilon^2.
	\end{equation}
For case (2), namely when $k$ is an integer multiple of $p$,  we have $
	\EE \|\vmti{k}-\nabla f(\xmi{k})\|^2 \leq \frac{pL^2\etai{k}^2}{|\SSm_2|} +  \frac{\sigma^2}{|\SSm_1|}= 0 +\frac{\epsilon^2}{16}\leq \frac{1}{8}\epsilon^2. $
	At the same time, since $\etai{k} = \min\left(\frac{\epsilon}{2Ln_0}, \frac{ \|\vmti{k}\|}{4L n_0}\right)$, we have $\etati{k}= \frac{\etai{k}}{\|\vmti{k}\|} = \min\left( \frac{ \epsilon}{2Ln_0\|\vmti{k}\| }, \frac{1}{4L n_0}\right) \leq \frac{1}{4Ln_0}$ and
	\begin{equation*}
	\begin{split} \etati{k}(1-\etati{k}L)\|\vmti{k}\|^2 \geq \frac{3\etati{k}}{4} \|\vmti{k}\|^2 =  \frac{3}{8}\min\left( \frac{ \epsilon}{Ln_0\|\vmti{k}\| }, \frac{1}{2L n_0}\right) \|\vmti{k}\|^2    =  \frac{3\epsilon^2}{16Ln_0}\min\left(\frac{2 \|\vmti{k}\|}{\epsilon},\frac{ \|\vmti{k}\|^2}{\epsilon^2}\right) \ged{172} \frac{3\epsilon(2\|\vmti{k}\|-\epsilon)}{16Ln_0},
	\end{split}
	\end{equation*}
where \ding{172} uses $x^2\geq 2|x|-1$ for $\forall x$. So by taking expectation, we have
\begin{equation*}
\begin{split}
	\EE \left[f(\xmi{k+1})- f(\xmi{k})\right] \leq \frac{1}{2} \frac{1}{4Ln_0} \frac{\epsilon^2}{8} - \frac{1}{2} \frac{3\epsilon(2\|\vmti{k}\|-\epsilon)}{16Ln_0} =  -\frac{\epsilon}{64Ln_0} \left( 12\EE \|\vmti{k}\|  -7\epsilon\right).
\end{split}
\end{equation*}
In this way, we have
\begin{equation*}
	\begin{split}
\frac{1}{K}\sum_{k=0}^{K-1}\EE \|\vmti{k}\|   \leq \frac{7\epsilon}{64} +\frac{16Ln_0}{3K \epsilon}	\EE \left[f(\xmi{0})- f(\xmi{K})\right] \leq \frac{7\epsilon}{64} + \frac{16Ln_0 \Delta}{3 K \epsilon},
	\end{split}
	\end{equation*}
where we use $	\EE \left[f(\xmi{0})- f(\xmi{K})\right]  \leq \EE \left[f(\xmi{0}) -f(\xmi{*})\right] \leq   f(\xmi{0}) -f(\xmi{*})\leq \Delta$. It means that after running at most $K=\frac{14Ln_0\Delta}{\epsilon^{2}}$ iterations, the algorithm will terminate, since
\begin{equation*}
\begin{split}
	\EE\| \nabla f(\widetilde{\xm})\| = &\frac{1}{K} \sum_{k=0}^{K-1} \EE\| \nabla f(\xmi{k})\| \leq   \frac{1}{K} \sum_{k=0}^{K-1} \left[ \EE\| \nabla f(\xmi{k})-\vmti{k}\|+\EE\|\vmti{k}\| \right]
	\led{172}   \frac{1}{K} \sum_{k=0}^{K-1}
  \sqrt{ \EE\| \nabla f(\xmi{k})-\vmti{k}\|^2} + \frac{\epsilon}{2}
	\led{173}  \epsilon,
\end{split}
\end{equation*}
where \ding{172} uses the Jensen's inequality; \ding{173} holds since $\EE\| \nabla f(\xm)-\vmti{k}\|^2\leq \frac{\epsilon^2}{8}$ in Eqn.~\eqref{smallvariance}. The proof is completed.	The proof is completed.
\end{proof}

\subsection{Proof of Corollary~\ref{complexityfinite}}\label{proofofcomplexityfinite}
\begin{proof}
According to Theorem~\ref{objectivefinite}, we know that after running at most $K=\frac{14Ln_0\Delta}{\epsilon^{2}}$ iterations, the algorithm will terminate.
	In this way, we can compute the stochastic gradient complexity as
	\begin{equation*}
	\begin{split}
	\mathcal{O}\left(\frac{K}{p}|\SSm_1|+K|\SSm_2|\right)=   \mathcal{O}\left( \frac{Ln_0\Delta}{\epsilon^{2}} \left( s \frac{1}{n_0 s^{1/2}} +\frac{s^{1/2}}{2 n_0}\right)\right)  =    \mathcal{O}\left(\min\left(n+\frac{L  \Delta\sqrt{n}}{\epsilon^2},\frac{L\Delta\sigma}{\epsilon^3}\right)\right).
	\end{split}
	\end{equation*}
The proof is completed.
\end{proof}

\subsection{Proof of Theorem~\ref{objective}}\label{proofofobjection}

\begin{proof}
	For brevity, let $\etati{k}=\frac{\etai{k}}{\|\vmti{k}\|}$. From Eqn.~\eqref{equtionloss}, we can obtain the following inequality:
	\begin{equation}
	\begin{split}
	f(\xmi{k+1})\leq f(\xmi{k})+\frac{\etati{k}}{2}\| \nabla f(\xmi{k})-\vmti{k}\|^2  -\frac{\etati{k}}{2}\left(1-\etati{k}L\right)\|\vmti{k}\|^2.
	\end{split}
	\end{equation}
Now we consider the two cases: (1) $k$ is not an integer multiple of $p$; (2) $k$ is an integer multiple of $p$. We can consider case (1) as follows.
 By setting $p=\frac{\sigma n_0}{\epsilon}$, $\etai{k} = \min\left(\frac{\epsilon}{2Ln_0}, \frac{ \|\vmti{k}\|}{4L n_0}\right)$, $|\SSm_1|=\frac{16\sigma^2}{\epsilon^2}$, $\SSm_2=\frac{\sigma}{2\epsilon n_0}$, where $n_0\in[1,2\sigma/\epsilon]$, Lemma~\ref{lemma1} gives
\begin{equation}\label{safdsaf}
\EE \|\vmti{k}-\nabla f(\xmi{k})\|^2 \leq \frac{pL^2\etai{k}^2}{|\SSm_2|} +  \frac{\sigma^2}{|\SSm_1|}=\frac{\sigma n_0}{\epsilon} L^2 \frac{\epsilon^2 }{4 L^2n_0^2} \frac{\epsilon n_0}{4\sigma}+\frac{\epsilon^2}{16} = \frac{1}{8}\epsilon^2.
\end{equation}
For case (2), namely when $k$ is an integer multiple of $p$, we have $
	\EE \|\vmti{k}-\nabla f(\xmi{k})\|^2 \leq \frac{pL^2\eta^2}{|\SSm_2|} +  \frac{\sigma^2}{|\SSm_1|}= 0 +\frac{\epsilon^2}{16}\leq \frac{1}{8}\epsilon^2. $
Then similar to proof in Sec.~\ref{proofofobjectivefinite},  since $\etai{k} = \min\left(\frac{\epsilon}{2Ln_0}, \frac{ \|\vmti{k}\|}{4L n_0}\right)$, we have $\etati{k}= \frac{\etai{k}}{\|\vmti{k}\|} = \min\left( \frac{ \epsilon}{2Ln_0\|\vmti{k}\| }, \frac{1}{4L n_0}\right) \leq \frac{1}{4Ln_0}$ and
	\begin{equation*}
	\begin{split} \etati{k}(1-\etati{k}L)\|\vmti{k}\|^2 \geq \frac{3\etati{k}}{4} \|\vmti{k}\|^2 =  \frac{3}{8}\min\left( \frac{ \epsilon}{Ln_0\|\vmti{k}\| }, \frac{1}{2L n_0}\right) \|\vmti{k}\|^2    =  \frac{3\epsilon^2}{16Ln_0}\min\left(\frac{2 \|\vmti{k}\|}{\epsilon},\frac{ \|\vmti{k}\|^2}{\epsilon^2}\right) \ged{172} \frac{3\epsilon(2\|\vmti{k}\|-\epsilon)}{16Ln_0},
	\end{split}
	\end{equation*}
where \ding{172} uses $x^2\geq 2|x|-1$ for $\forall x$.

So by taking expectation, we have
\begin{equation*}
\begin{split}
	\EE \left[f(\xmi{k+1})- f(\xmi{k})\right] \leq \frac{1}{2} \frac{1}{4Ln_0} \frac{\epsilon^2}{8} - \frac{1}{2} \frac{3\epsilon(2\|\vmti{k}\|-\epsilon)}{16Ln_0} =  -\frac{\epsilon}{64Ln_0} \left( 12\EE \|\vmti{k}\|  -7\epsilon\right).
\end{split}
\end{equation*}
In this way, we have
\begin{equation*}
	\begin{split}
\frac{1}{K}\sum_{k=0}^{K-1}\EE \|\vmti{k}\|   \leq \frac{7\epsilon}{64} +\frac{16Ln_0}{3K \epsilon}	\EE \left[f(\xmi{0})- f(\xmi{K})\right] \leq \frac{7\epsilon}{64} + \frac{16Ln_0 \Delta}{3 K \epsilon},
	\end{split}
	\end{equation*}
where we use $	\EE \left[f(\xmi{0})- f(\xmi{K})\right]  \leq \EE \left[f(\xmi{0}) -f(\xmi{*})\right] \leq   f(\xmi{0}) -f(\xmi{*})\leq \Delta$. It means that after running at most $K=\frac{14Ln_0\Delta}{\epsilon^{2}}$ iterations, the algorithm will terminate, since
\begin{equation*}
\begin{split}
	\EE\| \nabla f(\widetilde{\xm})\| = &\frac{1}{K} \sum_{k=0}^{K-1} \EE\| \nabla f(\xmi{k})\| \leq   \frac{1}{K} \sum_{k=0}^{K-1} \left[ \EE\| \nabla f(\xmi{k})-\vmti{k}\|+\EE\|\vmti{k}\| \right]
	\led{172}   \frac{1}{K} \sum_{k=0}^{K-1}
  \sqrt{ \EE\| \nabla f(\xmi{k})-\vmti{k}\|^2} + \frac{\epsilon}{2}
	\led{173}  \epsilon,
\end{split}
\end{equation*}
where \ding{172} uses the Jensen's inequality; \ding{173} holds since $\EE\| \nabla f(\xm)-\vmti{k}\|^2\leq \frac{\epsilon^2}{8}$ in Eqn.~\eqref{smallvariance}. The proof is completed.	The proof is completed.
\end{proof}

\subsection{Proof of Corollary~\ref{complexityonline}}\label{proofofcomplexityonline}
\begin{proof}
We adopt similar proof sketch of Corollary~\ref{complexityfinite}.
	According to Theorem~\ref{objective}, we know that after running at most $K=\frac{14Ln_0\Delta}{\epsilon^{2}}$ iterations, the algorithm will terminate. In this way, we can compute the stochastic gradient complexity as
	\begin{equation*}
	\begin{split}
	\mathcal{O}\left(\frac{K}{p}|\SSm_1|+K|\SSm_2|\right)=   \mathcal{O}\left( \frac{Ln_0\Delta}{\epsilon^{2}} \left(\frac{\sigma^2}{\epsilon^2} \frac{\epsilon}{\sigma n_0} +\frac{\sigma}{\epsilon n_0}\right)\right)  =   \mathcal{O}\left(\frac{L\sigma \Delta}{\epsilon^3}\right).
	\end{split}
	\end{equation*}
The proof is completed.
\end{proof}

\section{Proofs of the Results in Section~\ref{gradientdominatedfunctions}}\label{proofofgradientdominated}
Before proving Theorems~\ref{totalcomplexityofconvex} and \ref{totalcomplexityofconvex2}, we first prove Lemma~\ref{taugradientcomplexity} which is a key lemma to prove Theorems~\ref{totalcomplexityofconvex} and \ref{totalcomplexityofconvex2}.

\begin{lem}\label{taugradientcomplexity}
Assume function $f(\xm)$ is $\tau$-gradient dominated. Let $\Es$ denotes the event:
\begin{equation*}
\Es = \left\{ \EE \|\nabla f(\xms)\|^2 \leq  \epsilon^2 \quad  \text{and}\quad  \EE \left[ f(\xms)-f(\xmi{*})\right] \leq  \tau \epsilon^2.  \right\}
\end{equation*}
\\
(1)  For online-setting, we have  $p\!=\!\frac{\sigma n_0}{\epsilon}$, $\etai{k} = \frac{\|\vmi{k}\|}{2Ln_0}$, $|\SSm_1|\!=\!\frac{ 32\sigma^2}{\epsilon^2}$, $|\SSm_{2,k}|\!=\!\frac{8\sigma \|\vmi{k-1}\|^2}{\epsilon^3 n_0}$. To let the event $\Es$ happen, Algorithm~\ref{algmanifold} runs at most $K=\frac{64Ln_0\Delta}{\epsilon^{2}}$ iterations and the IFO complexity is
\begin{equation*}
\mathcal{O}\left( \frac{L\Delta\sigma}{\epsilon^3} \right), \quad \text{where}\ \widetilde{\Delta}=f(\xmi{0})-f(\xmi{*}).
\end{equation*}
(2) For finite-sum setting, we let $s\!=\!\min\!\big(n,\frac{ 32\sigma^2}{\epsilon^2}\big)$, $p\!=\!n_0 s^{\frac{1}{2}} $, $\etai{k} =  \frac{\|\vmi{k}\|}{2Ln_0}$, $|\SSm_1|\!=\!s$, $|\SSm_{2,k}|\!=$ $\min\left(\frac{8p\|\vmi{k-1}\|^2}{n_0^2\epsilon^2},n\right)$. To let the event $\Es$ happen, Algorithm~\ref{algmanifold} runs at most $K=\frac{64Ln_0\Delta}{\epsilon^{2}}$ iterations and the IFO complexity is
\begin{equation*}
    \mathcal{O}\left(\min\left(n+\frac{L  \Delta\sqrt{n}}{\epsilon^2},\frac{L\Delta\sigma}{\epsilon^3}\right)\right), \quad \text{where}\ \Delta=f(\xmi{0})-f(\xmi{*}).
\end{equation*}
\end{lem}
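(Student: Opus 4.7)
The plan is to mimic the proofs of Theorems~\ref{objectivefinite} and~\ref{objective}, but exploit the adaptive step sizes $\etai{k}=\|\vmi{k}\|/(2Ln_0)$ and the adaptive batch sizes $|\SSm_{2,k}|$ which scale with $\|\vmi{k-1}\|^2$. These scalings are designed so that the per-iteration variance bound, once combined with the $L$-smoothness descent inequality, telescopes cleanly and produces a geometric-style decrease compatible with the gradient-dominance property.

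First I would start from the same smoothness-based descent inequality as in~\eqref{equtionloss},
\begin{equation*}
f(\xmi{k+1})\le f(\xmi{k})+\frac{\etati{k}}{2}\|\nabla f(\xmi{k})-\vmti{k}\|^2-\frac{\etati{k}}{2}(1-\etati{k}L)\|\vmti{k}\|^2,
\end{equation*}
where $\etati{k}=\etai{k}/\|\vmti{k}\|=1/(2Ln_0)$, so that $1-\etati{k}L\ge 1/2$ for $n_0\ge 1$. This yields
\begin{equation*}
f(\xmi{k+1})\le f(\xmi{k})+\frac{1}{4Ln_0}\|\nabla f(\xmi{k})-\vmti{k}\|^2-\frac{1}{8Ln_0}\|\vmti{k}\|^2 .
\end{equation*}

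Next I would bound the variance term using the sharper, varying-batch version of the gradient-tracking lemma (Lemma~\ref{lemma22}). Since the normalization gives $\dis{\xmi{i}}{\xmi{i+1}}=\etai{i}=\|\vmi{i}\|/(2Ln_0)$, the inner-loop batch choice $|\SSm_{2,i+1}|=\min(8p\|\vmi{i}\|^2/(n_0^2\epsilon^2),n)$ makes every summand $L^2\dis^2(\xmi{i},\xmi{i+1})/|\SSm_{2,i+1}|$ equal to exactly $\epsilon^2/(32 p)$, so the $p$-term sum contributes at most $\epsilon^2/32$. Combined with $\sigma^2/|\SSm_1|\le\epsilon^2/32$ from $|\SSm_1|=s\ge 32\sigma^2/\epsilon^2$ (and analogously for the online case), this yields the key uniform variance bound $\EE\|\vmti{k}-\nabla f(\xmi{k})\|^2\le \epsilon^2/16$.

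Plugging this into the descent inequality, taking expectations, and telescoping over $k=0,\dots,K-1$ gives
\begin{equation*}
\frac{1}{K}\sum_{k=0}^{K-1}\EE\|\vmti{k}\|^2 \le \frac{8Ln_0\Delta}{K}+\frac{\epsilon^2}{8}.
\end{equation*}
Setting $K=64Ln_0\Delta/\epsilon^2$ yields average $\EE\|\vmti{k}\|^2\le \epsilon^2/4$; the triangle inequality $\|\nabla f(\xmi{k})\|^2\le 2\|\vmti{k}\|^2+2\|\nabla f(\xmi{k})-\vmti{k}\|^2$ then shows $\EE\|\nabla f(\xms)\|^2\le\epsilon^2$ when $\xms$ is sampled uniformly from the iterates. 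The second half of event $\Es$ follows immediately from $\tau$-gradient dominance: $\EE[f(\xms)-f(\xmi*)]\le\tau\EE\|\nabla f(\xms)\|^2\le\tau\epsilon^2$.

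Finally, to compute the IFO complexity I would sum the per-iteration sample counts. The inner-loop contribution is $\sum_{k}|\SSm_{2,k}|\le (8p/(n_0^2\epsilon^2))\sum_k \EE\|\vmi{k-1}\|^2 \le 2pK/n_0^2$ using the previous bound, while the snapshot evaluations contribute $(K/p)\cdot|\SSm_1|=Ks/p$. Substituting $p=n_0\sqrt{s}$ gives $O(K\sqrt{s}/n_0)=O(L\Delta\sqrt{s}/\epsilon^2)$, which specializes to $O(n+L\Delta\sqrt{n}/\epsilon^2)$ and $O(L\Delta\sigma/\epsilon^3)$ depending on which branch of $s=\min(n,32\sigma^2/\epsilon^2)$ is active. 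The online bound is obtained analogously with $p=\sigma n_0/\epsilon$ and $|\SSm_1|=32\sigma^2/\epsilon^2$. The main technical obstacle I anticipate is the careful handling of the random batch sizes $|\SSm_{2,k}|$ in taking expectations: because $|\SSm_{2,k}|$ depends on the iterate $\xmi{k-1}$, the conditioning in Lemma~\ref{lemma22} must be invoked before averaging, and the cumulative IFO bound requires the telescoped $\sum\EE\|\vmti{k}\|^2$ estimate established earlier in the argument, so the descent analysis and the complexity analysis are tightly intertwined rather than separable.
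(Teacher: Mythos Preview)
Your proposal is correct and follows essentially the same approach as the paper's own proof: the same descent inequality with $\etati{k}=1/(2Ln_0)$, the same use of Lemma~\ref{lemma22} with the adaptive batch sizes to obtain the uniform variance bound $\EE\|\vmti{k}-\nabla f(\xmi{k})\|^2\le \epsilon^2/16$, the same telescoping to $\frac{1}{K}\sum_k\EE\|\vmti{k}\|^2\le \epsilon^2/4$ at $K=64Ln_0\Delta/\epsilon^2$, and the same IFO tally exploiting $\sum_k\EE\|\vmti{k}\|^2\le K\epsilon^2/4$ to control $\sum_k\EE|\SSm_{2,k}|$. The concern you flag about conditioning with random $|\SSm_{2,k}|$ is legitimate but is handled (implicitly, as in the paper) by applying Lemma~\ref{lemma22} conditionally on the iterate path before taking the outer expectation.
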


\begin{proof}
For brevity, let $\etati{k}=\frac{\etai{k}}{\|\vmi{k}\|}=\frac{1}{2Ln_0}$.
Then similar to Eqn.~\eqref{equtionloss}, by using the $L$-gradient Lipschitz, we have
\begin{equation*}
\begin{split}
	f(\xmi{k+1})\leq& f(\xmi{k})+\langle \nabla f(\xmi{k}),\iExp{\xmi{k}}{\xmi{k+1}} \rangle+\frac{L}{2}\|\iExp{\xmi{k}}{\xmi{k+1}}\|^2\\
	\leq& f(\xmi{k})-\etati{k}\langle \nabla f(\xmi{k}), \vmti{k}\rangle+\frac{\etati{k}^2L}{2}\|\vmti{k}\|^2\\
	\leq& f(\xmi{k})-\etati{k}\langle \nabla f(\xmi{k})-\vmti{k}, \vmti{k}\rangle-\etati{k}\left(1-\frac{\etati{k}L}{2}\right)\|\vmti{k}\|^2\\
	\leq& f(\xmi{k})-\etati{k}\langle \nabla f(\xmi{k})-\vmti{k}, \vmti{k}\rangle-\etati{k}\left(1-\frac{\etati{k}L}{2}\right)\|\vmti{k}\|^2\\
	\leq& f(\xmi{k})+\frac{\etati{k}}{2}\| \nabla f(\xmi{k})-\vmti{k}\|^2  -\frac{\etati{k}}{2}\left(1-\etati{k}L\right)\|\vmti{k}\|^2\\
\led{172} & f(\xmi{k})+\frac{1}{4Ln_0}\| \nabla f(\xmi{k})-\vmti{k}\|^2  -\frac{1}{8Ln_0}\|\vmti{k}\|^2,
\end{split}
\end{equation*}
where \ding{172} holds since $n_0\geq 1$. By summing up this equation from 0 to $K-1$ and taking expectation, we can obtain
\begin{equation*}
\begin{split}
\frac{1}{K}\sum_{k=0}^{K-1}\EE \|\vmti{k}\|^2   \leq  \frac{2}{K}\sum_{k=0}^{K-1}\EE \|\vmti{k} - \nabla f(\xmi{k})\|^2	+  \frac{8Ln_0}{K}\left[f(\xmi{0})- f(\xmi{K})\right] \led{172} \frac{2}{K}\sum_{k=0}^{K-1}\EE \|\vmti{k} - \nabla f(\xmi{k})\|^2	+  \frac{8Ln_0 \Delta}{K},
\end{split}
\end{equation*}
where \ding{172} uses $	\EE \left[f(\xmi{0})- f(\xmi{K})\right]  \leq \EE \left[f(\xmi{0}) -f(\xmi{*})\right] \leq   f(\xmi{0}) -f(\xmi{*})\leq \Delta$.

Now we use Lemma~\ref{lemma22} to bound each $\EE \|\vmti{k} - \nabla f(\xmi{k})\|^2$ for both online and finite-sum setting. For online-setting, we have  $p\!=\!\frac{\sigma n_0}{\epsilon}$, $\etai{k} = \frac{\|\vmi{k}\|}{2Ln_0}$, $|\SSm_1|\!=\!\frac{ 32\sigma^2}{\epsilon^2}$, $|\SSm_{2,k}|\!=\!\frac{8\sigma \|\vmi{k-1}\|^2}{\epsilon^3 n_0}$. From Lemma~\ref{lemma22}, we can establish
	\begin{equation*}
	\EE \|\vmti{k}-\nabla f(\xmi{k})\|^2 \leq \mathbb{I}_{\left\{|\SSm_{1}|<n\right\}} \frac{\sigma^2}{|\SSm_1|} +  L^2\!\!\sum_{i=k_0}^{k_0+p-1}\!\!\mathbb{I}_{\left\{|\SSm_{2,i+1}|<n\right\}} \frac{\diss{\xmi{i}}{\xmi{i+1}}}{|\SSm_{2,i+1}|} \leq \sigma^2 \frac{\epsilon^2}{32\sigma^2} +  L^2\!\!\sum_{i=k_0}^{k_0+p-1}  \frac{\|\vmi{i}\|^2}{4L^2n_0^2}  \frac{\epsilon^3  n_0 }{8\sigma \|\vmi{i}\|^2} \leq   \frac{\epsilon^2}{16},
	\end{equation*}
where we use $\diss{\xmi{k+1}}{\xmi{k}}= \left\|\iExp{\xmi{k}}{\xmi{k+1}}\right\|^2=\etai{k}^2$ since $\xmi{k+1}= \Exp{\xmi{k}}{-\etai{k} \frac{\vmti{k}}{\|\vmti{k}\|}}$.  For finite-sum setting, we let $s\!=\!\min\!\big(n,\frac{ 32\sigma^2}{\epsilon^2}\big)$, $p\!=\!n_0 s^{\frac{1}{2}} $, $\etai{k} =  \frac{\|\vmi{k}\|}{2Ln_0}$, $|\SSm_1|\!=\!s$, $|\SSm_{2,k}|\!=$ $\min\left(\frac{8p\|\vmi{k-1}\|^2}{n_0^2\epsilon^2},n\right)$. In this case, we also have
	\begin{equation*}
	\EE \|\vmti{k}-\nabla f(\xmi{k})\|^2 \leq \mathbb{I}_{\left\{|\SSm_{1}|<n\right\}} \frac{\sigma^2}{|\SSm_1|} +  L^2\sum_{i=k_0}^{k_0+p-1}\mathbb{I}_{\left\{|\SSm_{2,i+1}|<n\right\}} \frac{\diss{\xmi{i}}{\xmi{i+1}}}{|\SSm_{2,i+1}|} \leq \sigma^2 \frac{\epsilon^2}{32\sigma^2} +  L^2\sum_{i=k_0}^{k_0+p-1} \frac{\|\vmi{i}\|^2}{4L^2n_0^2} \frac{ n_0^2\epsilon^2}{ 8p\|\vmi{i}\|^2}  \leq   \frac{\epsilon^2}{16}.
	\end{equation*}

Meanwhile, we set $K=\frac{64Ln_0\Delta}{\epsilon^{2}}$, which gives
\begin{equation*}
\begin{split}
\frac{1}{K}\sum_{k=0}^{K-1}\EE \|\vmti{k}\|^2    \leq \frac{2}{K}\sum_{k=0}^{K-1}\EE \|\vmti{k} - \nabla f(\xmi{k})\|^2	+  \frac{8Ln_0 \Delta}{K} \leq \frac{\epsilon^2}{4}.
\end{split}
\end{equation*}
It means that after running at most $K=\frac{14Ln_0\Delta}{\epsilon^{2}}$ iterations, the algorithm will terminate, since
\begin{equation*}
\begin{split}
	\EE\| \nabla f(\widetilde{\xm})\|^2 = &\frac{1}{K} \sum_{k=0}^{K-1} \EE\| \nabla f(\xmi{k})\|^2 \leq   \frac{1}{K} \sum_{k=0}^{K-1} \left[ 2\EE\| \nabla f(\xmi{k})-\vmti{k}\|^2+2\EE\|\vmti{k}\|^2 \right]
	 \leq   \epsilon^2.
\end{split}
\end{equation*}

Then we use the definition of $\tau$-gradient dominated function, we have
\begin{equation*}
\begin{split}
	\EE[ f(\widetilde{\xm}) -f(\xmi{*})] = &\frac{1}{K} \sum_{k=0}^{K-1}\EE[ f(\xmi{k}) -f(\xmi{*})] \leq   \frac{\tau }{K} \sum_{k=0}^{K-1} \EE\| \nabla f(\xmi{k})\|^2 =\tau \EE\|\nabla f(\xms)\|^2
	 \leq  \tau \epsilon^2.
\end{split}
\end{equation*}

Now consider the IFO complexity for both online and finite-sum settings. For online setting, its IFO complexity is
	\begin{equation*}
	\begin{split}
	\mathcal{O}\left(\frac{K}{p}|\SSm_1|+\sum_{k=0}^{K-1}\EE |\SSm_{2,k}|\right)=   \mathcal{O}\left( \frac{L\Delta\sigma}{\epsilon^3} + \frac{\sigma}{n_0\epsilon^3}\sum_{k=0}^{K-1} \EE \|\vmi{k}\|^2 \right) \leq  \mathcal{O}\left( \frac{L\Delta\sigma}{\epsilon^3} + \frac{\sigma}{n_0\epsilon^3} K \cdot \frac{\epsilon^2}{4}\right) = \mathcal{O}\left( \frac{L\Delta\sigma}{\epsilon^3} \right).
	\end{split}
	\end{equation*}
similarly, we can compute the expectation IFO complexity for finite-sum setting:
	\begin{equation*}
	\begin{split}
	\mathcal{O}\left(\frac{K}{p}|\SSm_1|+\sum_{k=0}^{K-1}\EE |\SSm_{2,k}|\right)=      \mathcal{O}\left(\min\left(n+\frac{L  \Delta\sqrt{n}}{\epsilon^2},\frac{L\Delta\sigma}{\epsilon^3}\right)\right).
	\end{split}
	\end{equation*}
The proof is completed.
\end{proof}

\subsection{Proof of  Theorems~\ref{totalcomplexityofconvex} }\label{proofoffinitegradient}

Now we are ready to prove Theorem~\ref{totalcomplexityofconvex}.

\begin{proof}
We first consider the $t$ iteration in Algorithm~\ref{algmanifold2}.
By Lemma~\ref{taugradientcomplexity}, we obtain that by using $\epsilon_{t-1}$ with proper other parameters, the IFO complexity of Algorithm~\ref{algmanifold} for computing $\EE[\|\nabla f(\widetilde{\xm}_t)\|^2]\leq \epsilon_{t-1}^2$ is $$\mathcal{O}\big(\min\big(n+\frac{L  \Delta_t\sqrt{n}}{\epsilon_{t-1}^2},\frac{L\Delta_t\sigma}{\epsilon_{t-1}^3}\big)\big),$$ when the parameters satisfy $s_t=\!\min\!\big(n,\frac{ 32\sigma^2}{\epsilon_{t-1}^2}\big)$, $p^t\!=\!n_0^t s_t^{\frac{1}{2}}$, $\etai{k}^t =  \frac{\|\vmi{k}^t\|}{2Ln_0}$, $|\SSm_1^t|\!=\!s_t$, $|\SSm_{2,k}^t|\!=$ $\min (\frac{8p^t\|\vmi{k-1}^t\|^2}{(n_0^t)^2\epsilon_{t-1}^2},n )$  and $K^t=\frac{64Ln_0^t\Delta^t}{\epsilon_{t-1}^{2}}$. Then the initial point $\xmi{0}$ at the $t$ iteration is the output $\xmti{t-1}$ of the $(t-1)$-th iteration, which gives the distance { $\Delta_t=\EE [f(\xmi{0})-f(\xmi{*})]= \EE [f(\xmti{t-1})-f(\xmi{*})]\leq \tau \epsilon_{t-2}^2$} by using Lemma~\ref{taugradientcomplexity}. On the other hand, $\epsilon_t=\frac{\epsilon_0}{2^t}$. So the IFO complexity of the $t$-th iteration is
\begin{equation*}
\mathcal{O}\big(\min\big(n+\frac{L  \Delta_t\sqrt{n}}{\epsilon_{t-1}^2}, \frac{L\Delta_t\sigma}{\epsilon_{t-1}^3}\big)\big)=
\mathcal{O}\big(\min\big(n+\frac{L  \tau \epsilon_{t-2}^2 \sqrt{n}}{\epsilon_{t-1}^2}, \frac{L \sigma \tau \epsilon_{t-2}^2}{\epsilon_{t-1}^3}\big)\big)=
\mathcal{O}\left(\min\left(n+ \tau L\sqrt{n}, \frac{\tau L\sigma}{\epsilon_{t-1}}\right)\right).
\end{equation*}

So  to achieve  $\epsilon_T\leq \frac{\epsilon_0}{2^T}\leq \epsilon$, $T$ satisfies $T\geq \log\left(\frac{\epsilon_0}{\epsilon}\right)$.	 So for the $T$ iterations, the total complexity is
\begin{equation*}
\mathcal{O}\left(\min\left(\left(n+ \tau L\sqrt{n}\right)\log\left(\frac{1}{\epsilon}\right), \tau L\sigma\sum_{t=1}^{T} \frac{1}{\epsilon_{t-1}}\right)\right) = \mathcal{O}\left(\min\left(\left(n+ \tau L\sqrt{n}\right)\log\left(\frac{1}{\epsilon}\right), \frac{\tau L\sigma }{\epsilon}\right)\right).
\end{equation*}

Meanwhile, we can obtain
\begin{equation*}
\EE \|\nabla f(\xmti{t})\| \leq \sqrt{ \EE \|\nabla f(\xmti{t})\|^2} \leq  \epsilon_{t-1} = \frac{\epsilon_0}{2^{t-1}}= \frac{1}{2^t}\sqrt{\frac{\Delta}{\tau}} \quad  \text{and}\quad  \EE \left[ f(\xmti{t})-f(\xmi{*})\right] \leq  \tau \epsilon_{t-1}^2=\frac{\tau \epsilon_0^2}{4^{t-1}}=\frac{ \Delta}{4^t},
\end{equation*}
where we set $\epsilon_0=\frac{1}{2} \sqrt{\frac{\Delta}{\tau}}$.  The proof is completed.
\end{proof}

\subsection{Proof of Theorem~\ref{totalcomplexityofconvex2}}\label{proofofgradientonline}
\begin{proof}
The proof here is very similar to the strategy in Section~\ref{proofoffinitegradient} for proving Theorem~\ref{totalcomplexityofconvex}. The main idea is to use the result in Lemma~\ref{taugradientcomplexity}, to achieve
\begin{equation*}
\EE \|\nabla f(\xms)\|^2 \leq  \epsilon^2 \quad  \text{and}\quad  \EE \left[ f(\xms)-f(\xmi{*})\right] \leq  \tau \epsilon^2,
\end{equation*}
 the IFO complexity is
\begin{equation*}
\mathcal{O}\left( \frac{L\Delta\sigma}{\epsilon^3} \right), \quad \text{where}\ \widetilde{\Delta}=f(\xmi{0})-f(\xmi{*}).
\end{equation*}

Then following the proof in Section~\ref{proofoffinitegradient} for proving Theorem~\ref{totalcomplexityofconvex}, we can obtain the IFO complexity for achieving $\EE \|\nabla f(\xmti{t})\|^2\leq \epsilon_{t-1}^2 $:
	\begin{equation*}
	\begin{split}
	 \mathcal{O}\left( \frac{\tau L \sigma}{\epsilon}\right),
	\end{split}
	\end{equation*}
when the parameters obey  $p_t\!=\!\frac{\sigma n_0^t}{\epsilon_{t-1}}$, $\etai{k}^t = \frac{\|\vmi{k}^{t}\|}{2Ln_0^t}$, $|\SSm_1^t|\!=\!\frac{ 32\sigma^2}{\epsilon_{t-1}^2}$, $|\SSm_{2,k}^t|\!=\!\frac{8\sigma \|\vmi{k-1}^{t}\|^2}{\epsilon_{t-1}^3 n_0^t}$, and $K^t=\frac{64Ln_0^t\Delta^t}{\epsilon_{t-1}^{2}}$.

Meanwhile, we can obtain
\begin{equation*}
\EE \|\nabla f(\xmti{t})\| \leq \sqrt{ \EE \|\nabla f(\xmti{t})\|^2} \leq  \epsilon_{t-1} = \frac{\epsilon_0}{2^{t-1}}= \frac{1}{2^t}\sqrt{\frac{\Delta}{\tau}} \quad  \text{and}\quad  \EE \left[ f(\xmti{t})-f(\xmi{*})\right] \leq  \tau \epsilon_{t-1}^2=\frac{\tau \epsilon_0^2}{4^{t-1}}=\frac{ \Delta}{4^t},
\end{equation*}
where we set $\epsilon_0=\frac{1}{2} \sqrt{\frac{\Delta}{\tau}}$.  The proof is completed.
\end{proof}

\section{More Experimental Results}\label{append:more_experiment}

\subsection{Descriptions of Testing Datasets}
We first briefly introduce the ten testing datasets in the manuscript. Among them, there are six datasets, including   \textsf{a9a}, \textsf{satimage},  \textsf{covtype}, \textsf{protein}, \textsf{ijcnn1} and \textsf{epsilon}, that are provided in the LibSVM website\footnote[1]{https://www.csie.ntu.edu.tw/~cjlin/libsvmtools/datasets/}.
We also evaluate our algorithms on the three datasets: \textsf{YaleB}~\cite{georghiades2001few}, \textsf{AR}~\cite{AR} and \textsf{PIE}~\cite{sim2003cmu}, which are very commonly used face classification datasets.  Finally, we also test those algorithms on a movie recommendation dataset, namely \textsf{MovieLens-1M}\footnote[2]{https://grouplens.org/datasets/movielens/1m/}. Their detailed information is summarized in Table~\ref{Tabledatasets}. From it we can observe that these datasets are different from each other due to their feature dimension, training samples, and class numbers, \textit{etc}. 
\begin{table}[h]
\caption{Descriptions of the ten testing datasets.}
\vspace{-0.8em}
\setlength{\tabcolsep}{6.5pt} 
\renewcommand{\arraystretch}{0.94}
\label{Tabledatasets}
\centering
{ \footnotesize {
\begin{tabular}{lccc|lccc}
\toprule
 & $\#$class& $\#$sample& $\#$feature & & $\#$class& $\#$sample& $\#$feature\\  \midrule
 \textsf{a9a} & 2 & 32,561	&	123 & \textsf{epsilon} & 2 &40,000	&	2000 \\
\textsf{satimage} & 6	&4,435      &    36 & \textsf{YaleB} & 38 &  2,414& 2,016\\
\textsf{covtype} & 2 & 581,012	&54 & \textsf{AR} & 100& 2,600 &1,200 \\
\textsf{protein}& 3	& 14,895   &     357 & \textsf{PIE}&  64& 11,554 &1,024 \\
\textsf{ijcnn1} & 2 &49,990	&	22 & \textsf{MovieLens-1M}& ---& 6,040&3,706  \\
\bottomrule
  \hline
\end{tabular}
}}
\vspace{-0em}
\end{table}

\subsection{Comparison of Algorithm Running Time}\label{algorithmrunningttimecomparison}
In this subsection, we present more experimental results to show the algorithm running time comparison among the compared algorithms in the manuscript. The experimental results in Figure~\ref{comparisonasfdasfdwith} only provides the algorithm running time comparison of the \textsf{ijcnn} and \textsf{epsilon} datasets. Here we provide the comparison of all remaining datasets in Figure~\ref{comparfsfas23safsafwr} which respond to Figures~\ref{comparisonasfdasfdwith} and \ref{comparfsfasfwr} in the manuscript. From the curves of comparison of optimality  gap vs. algorithm running time, one can observe that    our R-SPIDER-A is the fastest method and   R-SPIDER can also quickly converge to a relatively high accuracy, \emph{e.g.} $10^{-8}$. We have discussed these results in the manuscript. Besides, all these results are consistent with the curves of the comparison of optimality  gap vs. IFO, since  the IFO complexity  can  comprehensively reflect the overall computational performance of a first-order Riemannian algorithm.

\begin{figure*}[ht]
\begin{center}
\setlength{\tabcolsep}{0.8pt} 
\begin{tabular}{cccc}
\includegraphics[width=0.2454\linewidth]{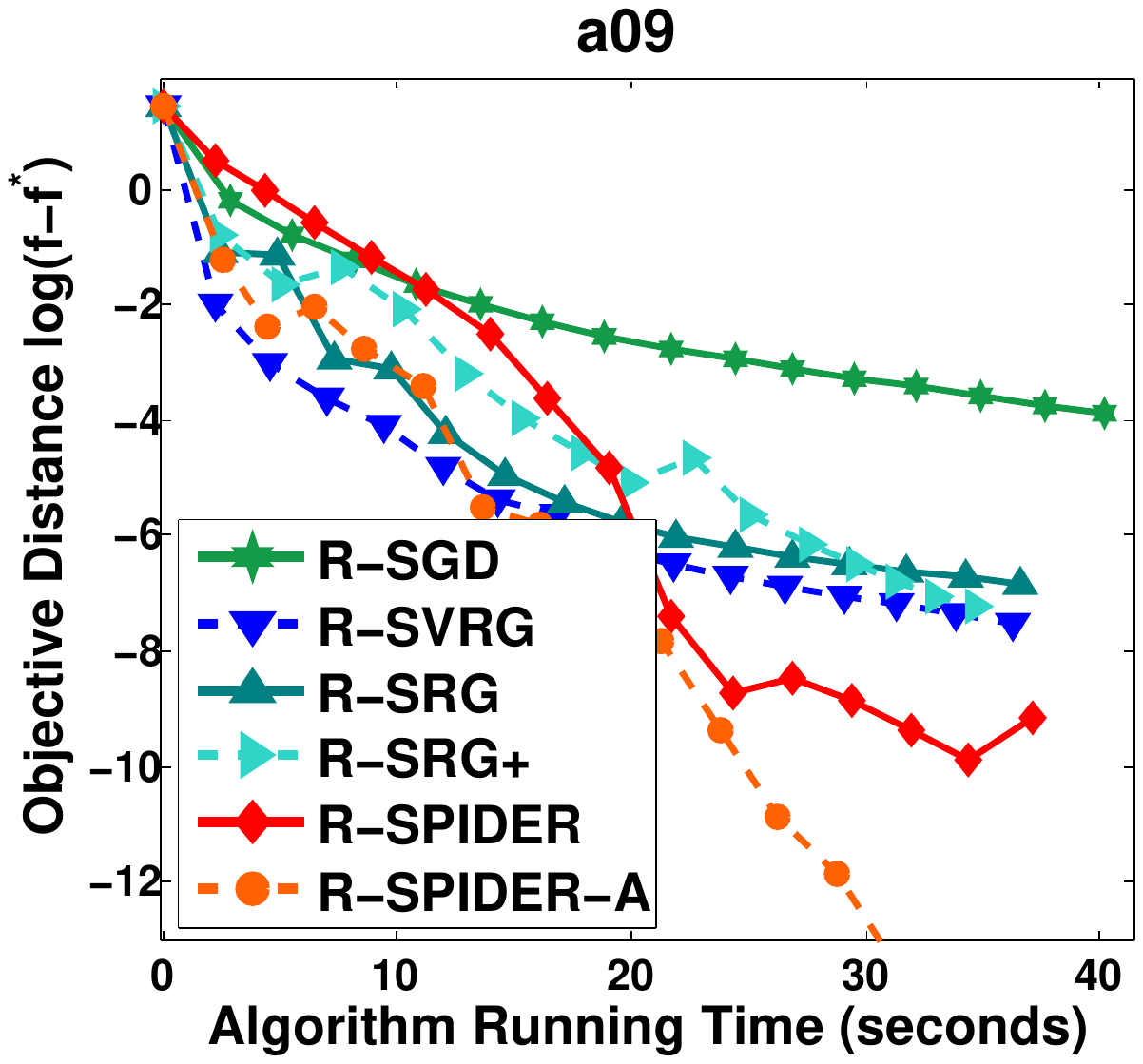}&
\includegraphics[width=0.2454\linewidth]{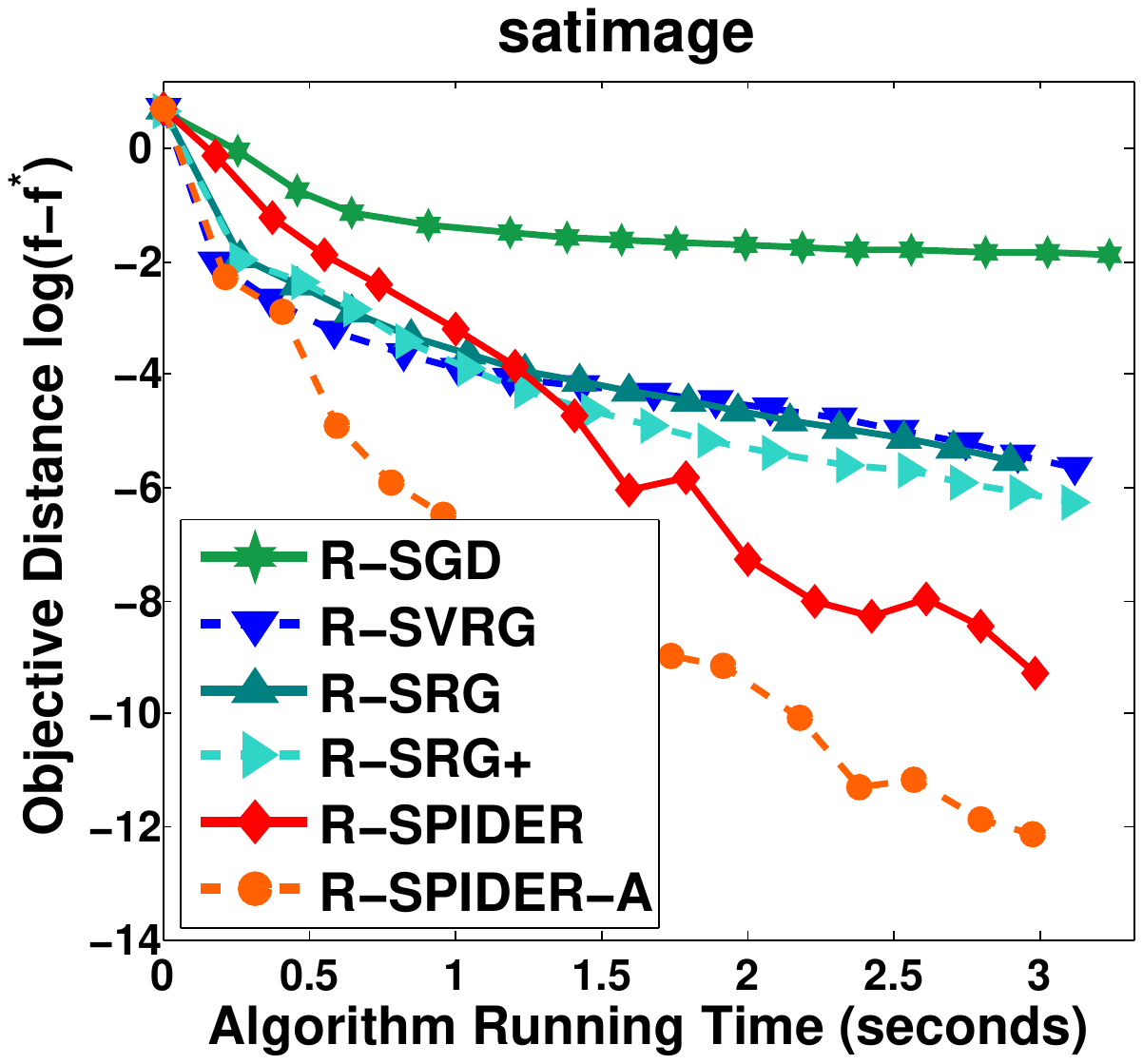}&
\includegraphics[width=0.2454\linewidth]{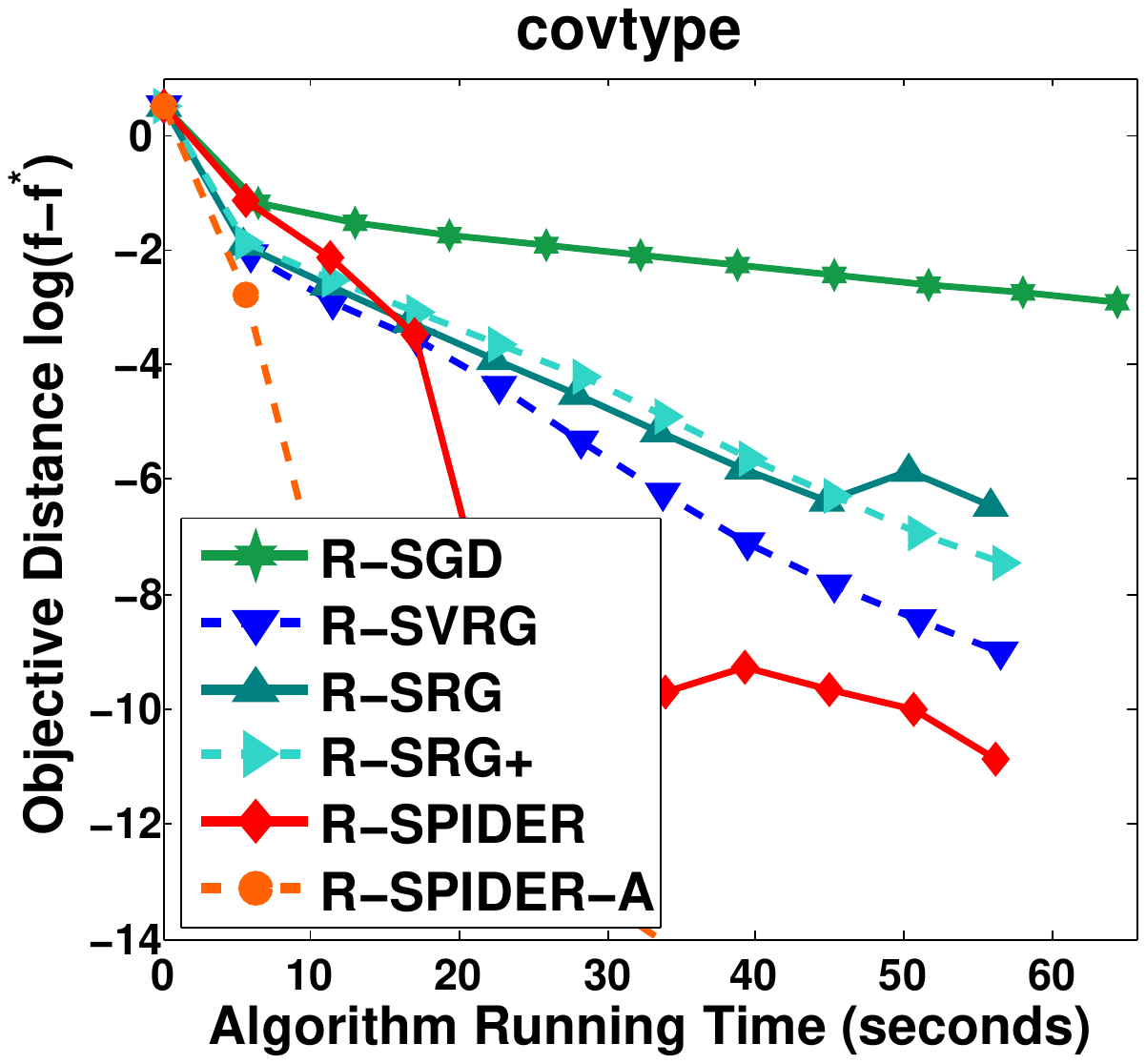}&
\includegraphics[width=0.2454\linewidth]{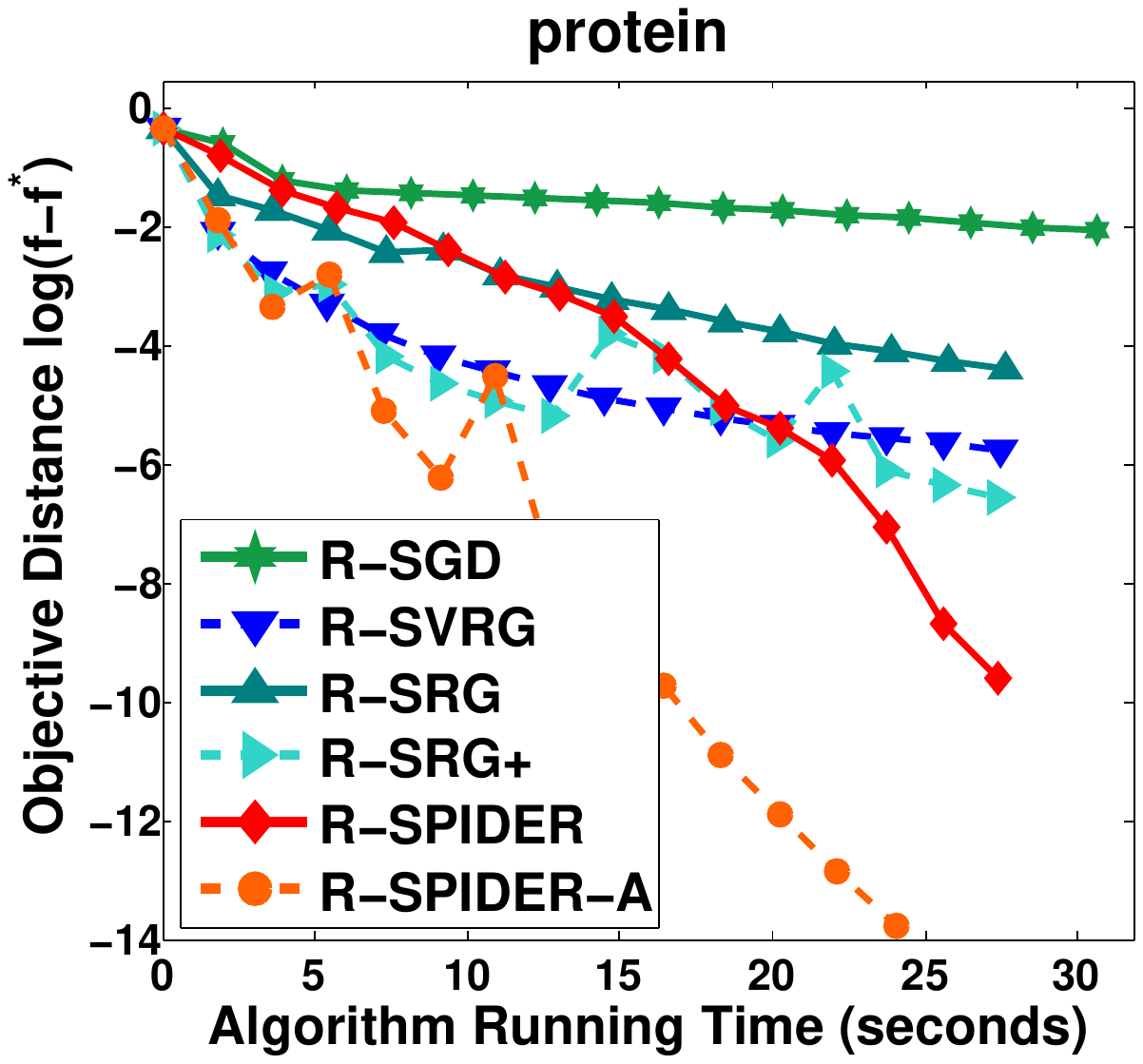}\\
\multicolumn{4}{c}{{(a) Comparison among Riemannian stochastic gradient algorithms on $k$-PCA problem.}}\\
\includegraphics[width=0.2454\linewidth]{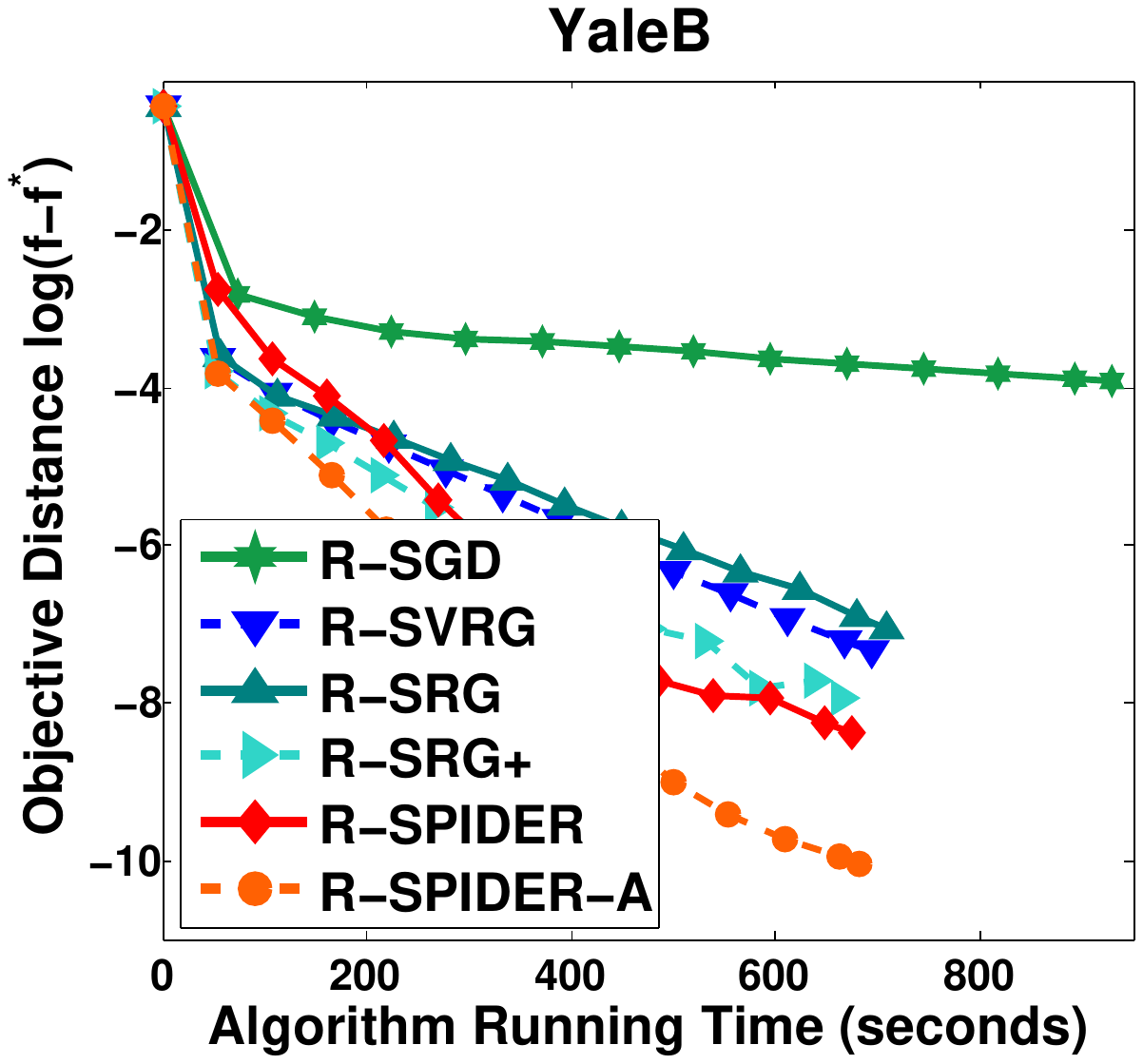}&
\includegraphics[width=0.2454\linewidth]{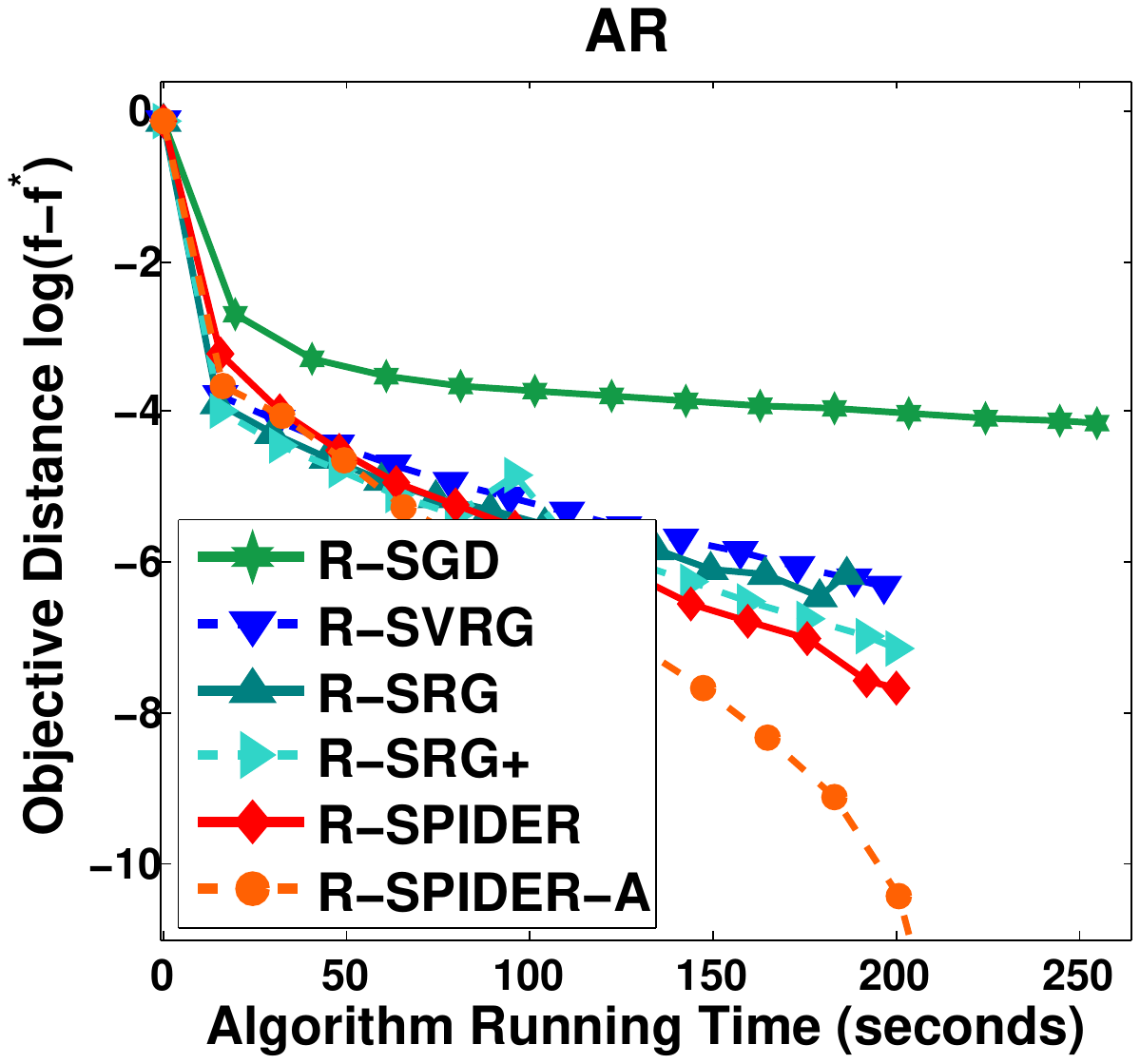}&
\includegraphics[width=0.2454\linewidth]{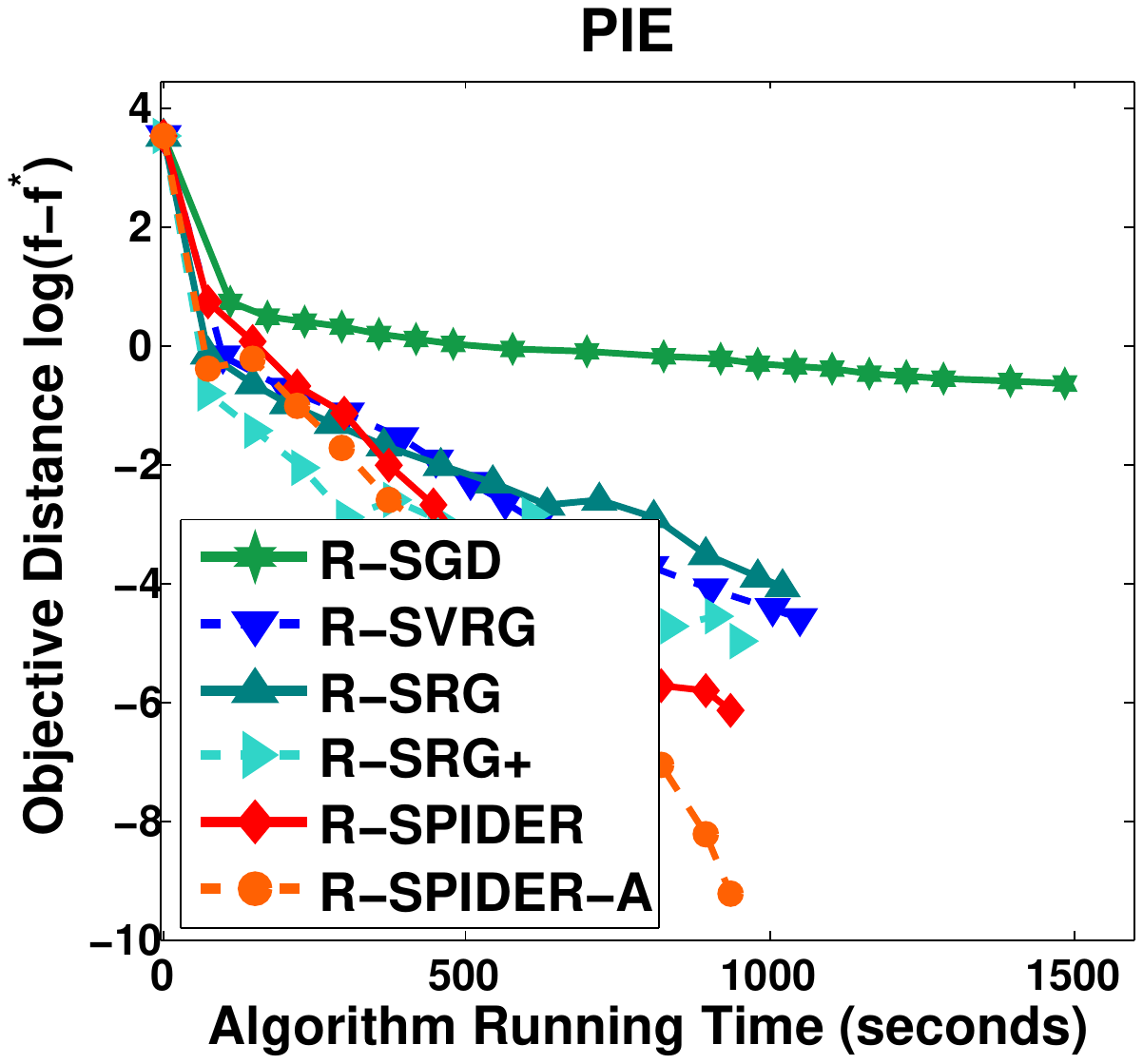}&
\includegraphics[width=0.2454\linewidth]{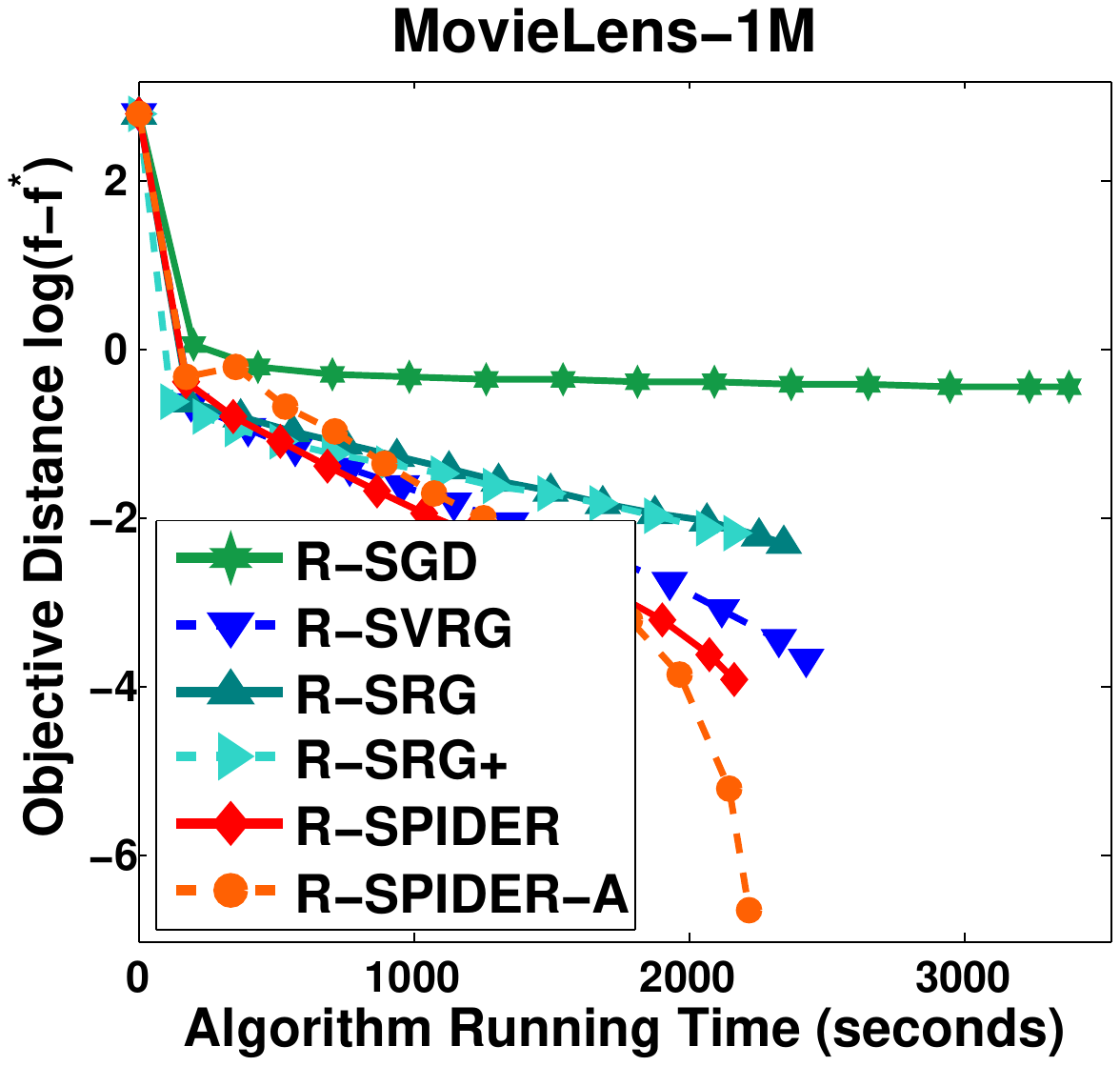}\\
\multicolumn{4}{c}{{(b) Comparison among Riemannian stochastic gradient algorithms on low-rank matrix completion problem.}}
\end{tabular}
\end{center}
\vspace{-1.2em}
\caption{Comparison of algorithm running time of Riemannian stochastic gradient algorithms. } \label{comparfsfas23safsafwr}
\vspace{-1.0em}
\end{figure*}

\subsection{Comparison between Riemannian Stochastic Gradient Algorithms with Adaptive Learning Rate}\label{moreexperimentada}
Here we provide more comparison among our proposed R-SPIDER-A, R-SRG-A and R-SRG+A.  R-SRG-A and R-SRG+A are respectively the counterparts of R-SRG and R-SRG+ with adaptive learning rate of formulation $\eta_k=\alpha(1+\alpha \lambda_{\alpha}\lfloor \frac{k}{p}\rfloor)$~\cite{kasai2018riemannian}. Notice, the reason that we do not compare all algorithms together is to avoid too many curves in one figure, leading to poor readability.

By observing Figure~\ref{comparsdfwe45twetgfasfwr}, we can find that the algorithm with adaptive learning rate usually outperforms the vanilla counterpart, which demonstrates the effectiveness of the strategy of adaptive learning rate. Moreover, R-SPIDER-A also consistently shows sharpest convergence behaviors compared with R-SRG-A and R-SRG+A. All these results are consistent with the experimental results in the manuscript. All results shows the advantages of our proposed R-SPIDER and R-SPIDER-A.

\begin{figure*}[h!p!b!t]
\begin{center}
\setlength{\tabcolsep}{0.8pt} 
\begin{tabular}{cccc}
 &
\includegraphics[width=0.2454\linewidth]{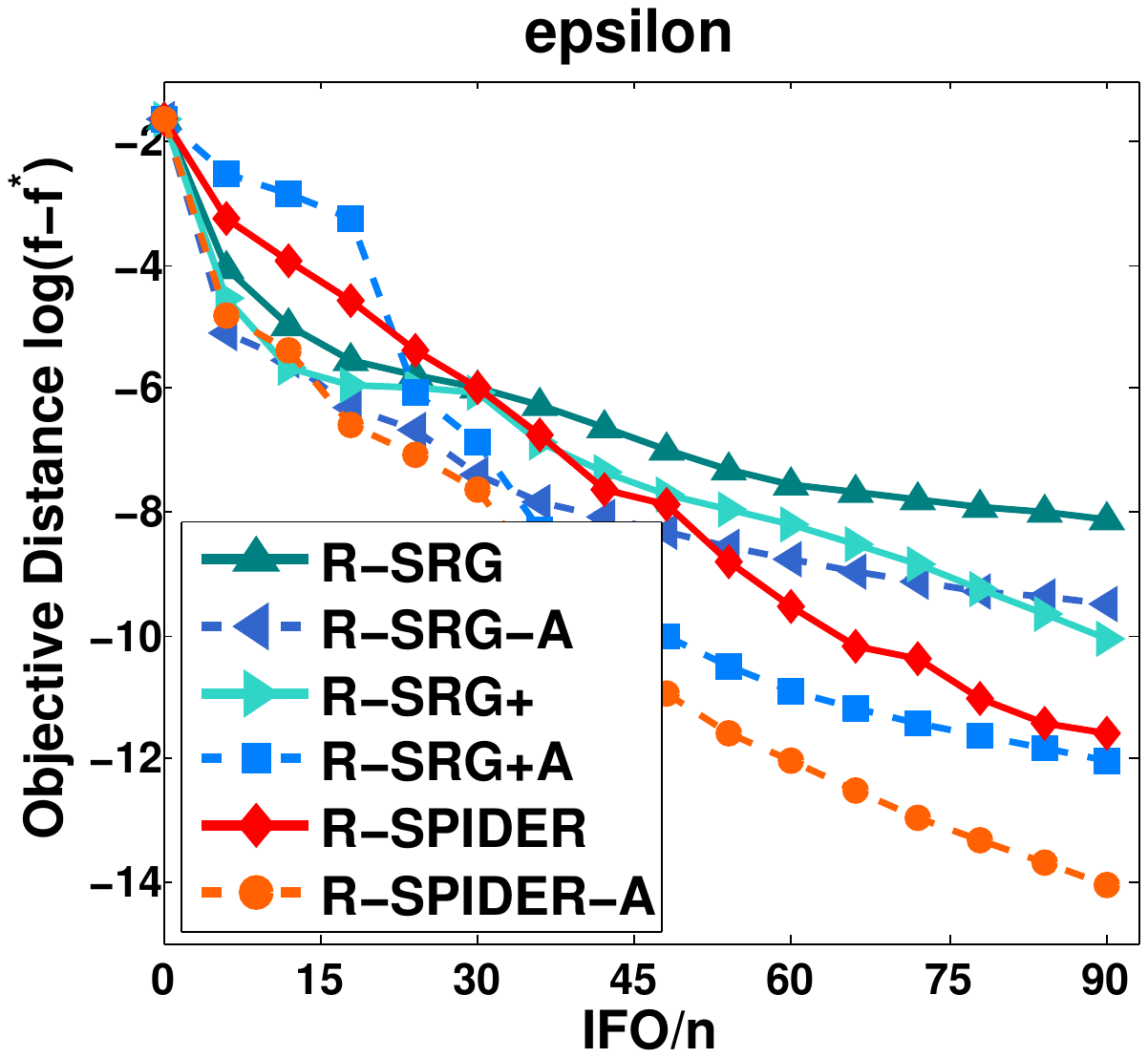}&
\includegraphics[width=0.2454\linewidth]{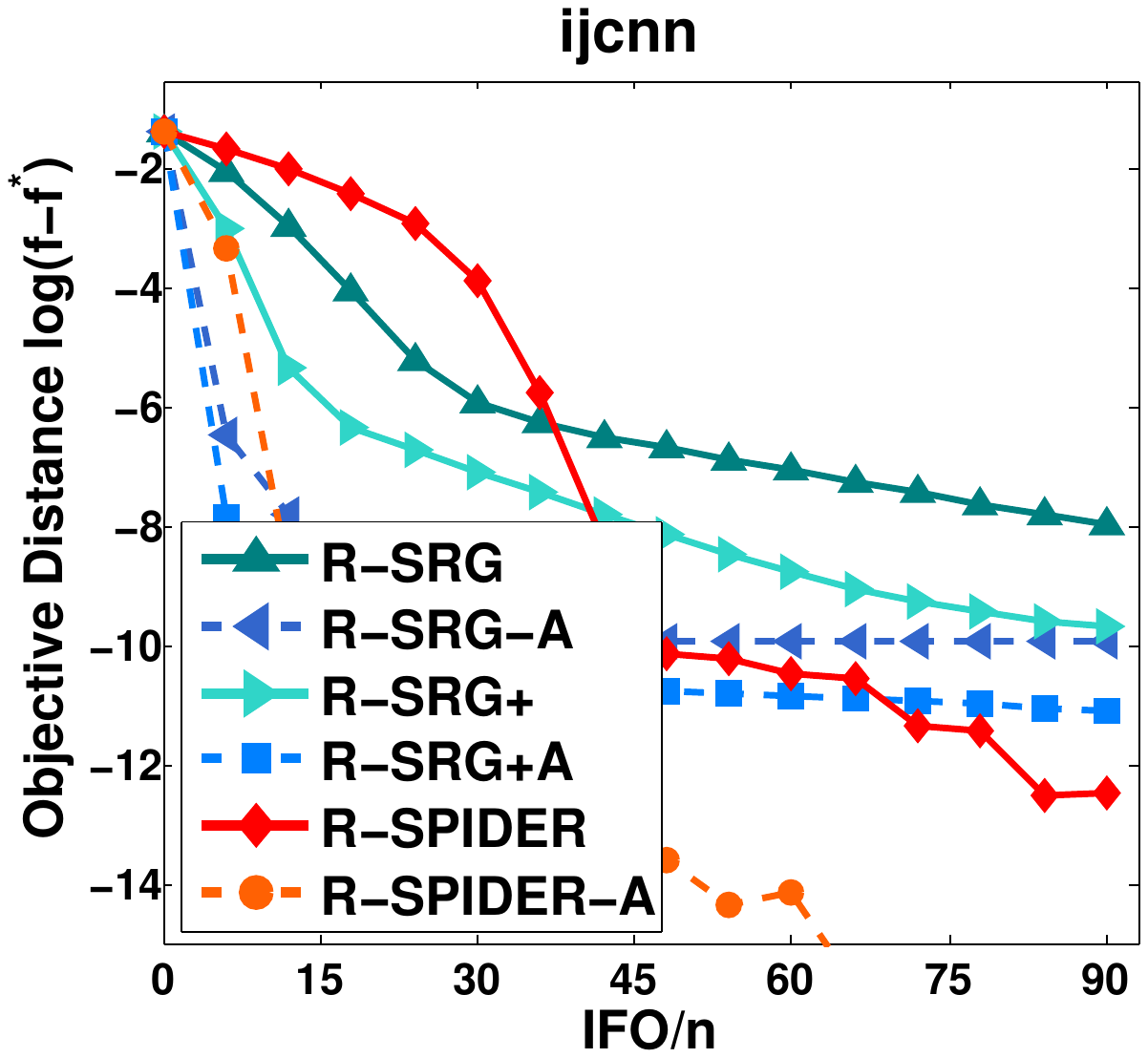}& \\
\multicolumn{4}{c}{{(a) Comparison among Riemannian stochastic gradient algorithms on $k$-PCA problem.}}\\
\includegraphics[width=0.2454\linewidth]{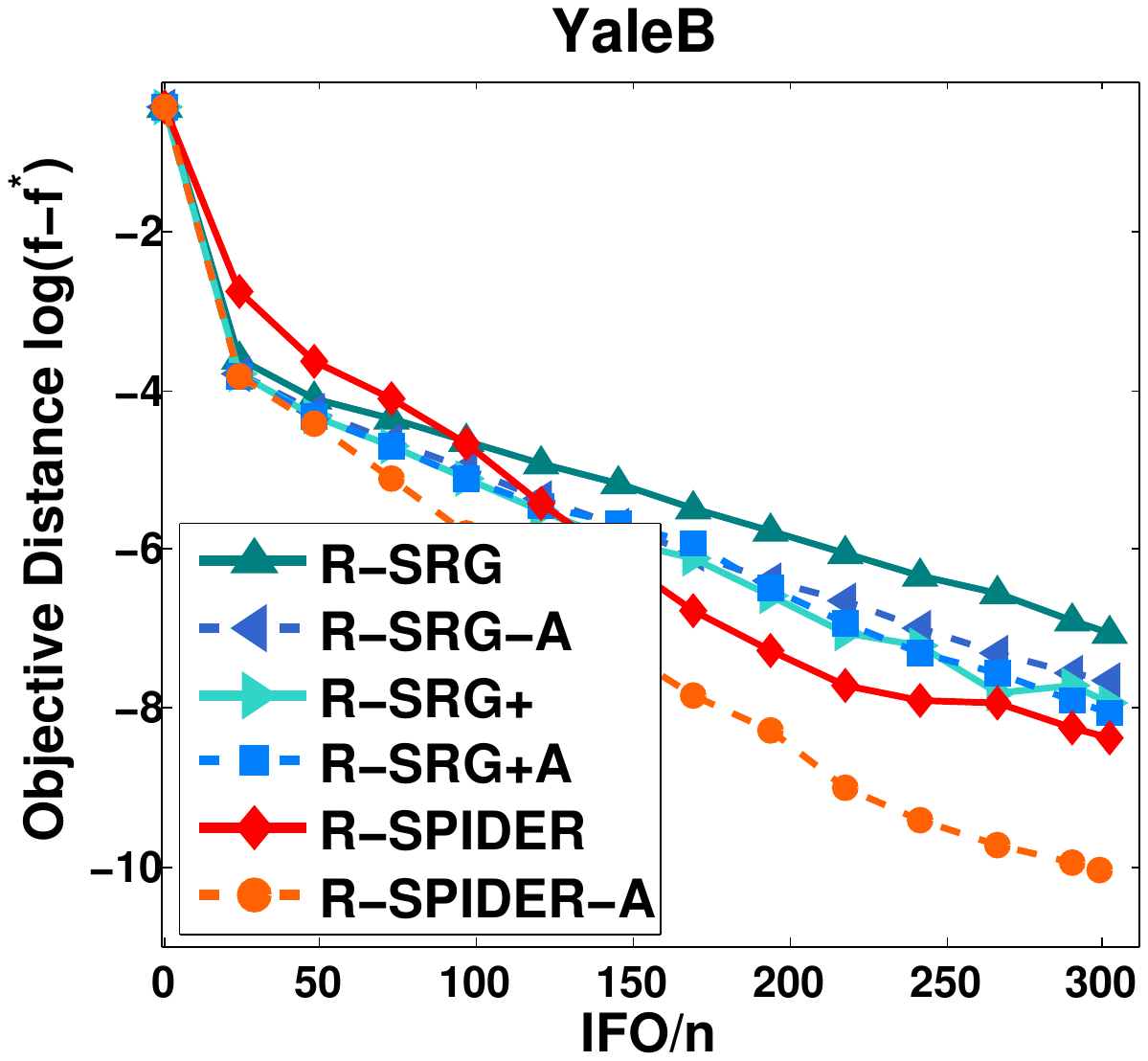}&
\includegraphics[width=0.2454\linewidth]{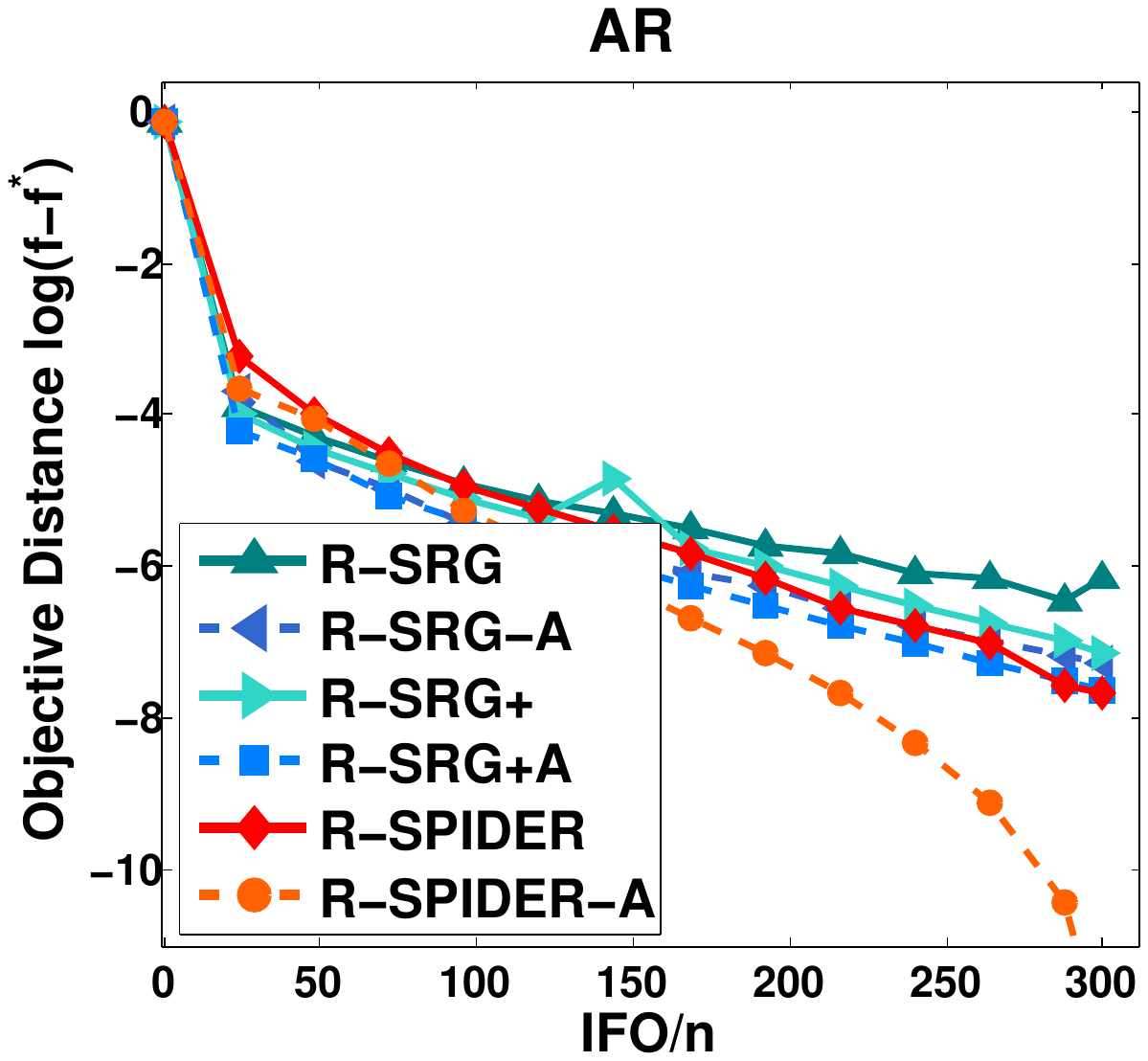}&
\includegraphics[width=0.2454\linewidth]{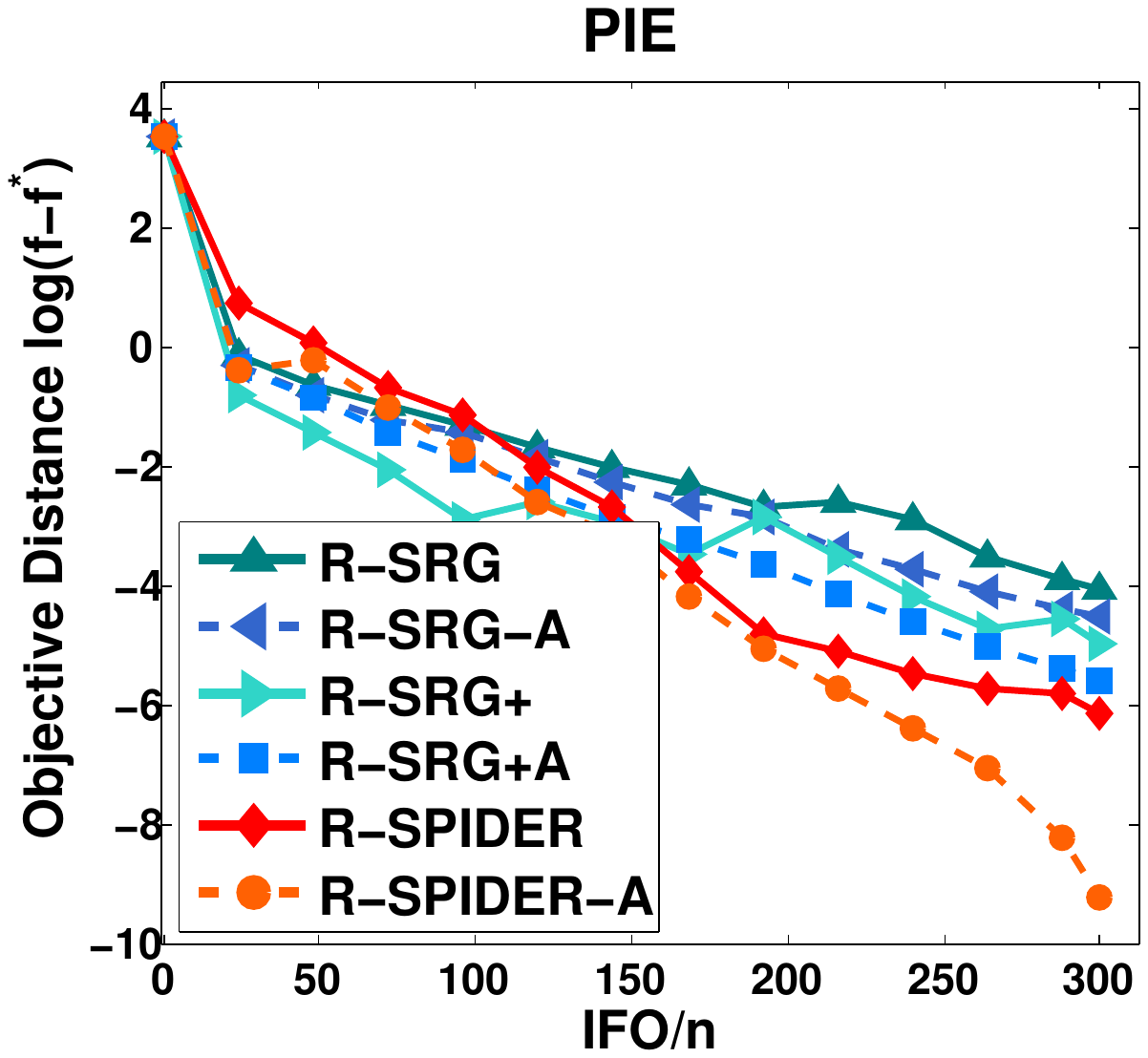}&
\includegraphics[width=0.2454\linewidth]{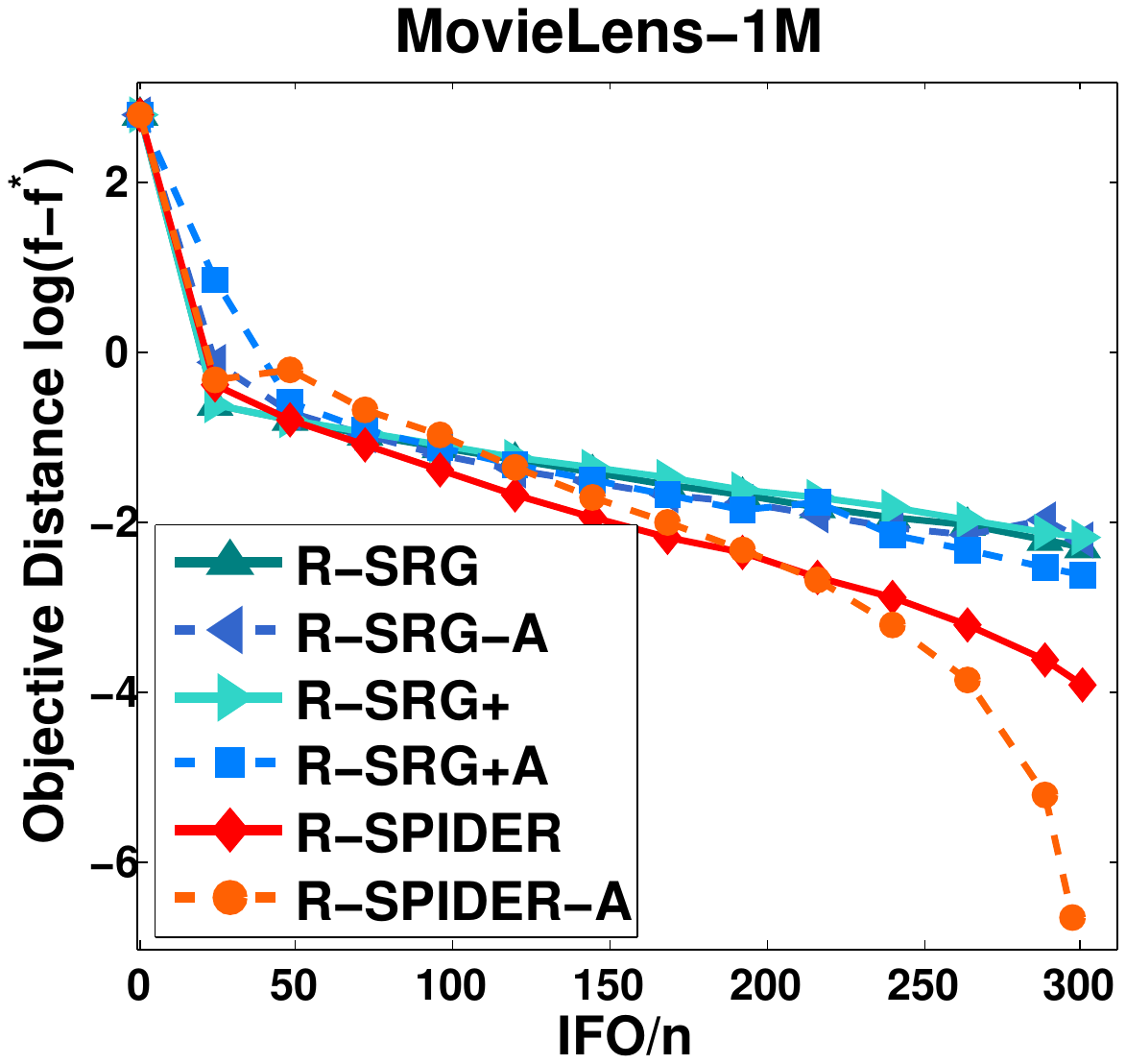}\\
\multicolumn{4}{c}{{(b) Comparison among Riemannian stochastic gradient algorithms on low-rank matrix completion problem.}}
\end{tabular}
\end{center}
\vspace{-1.2em}
\caption{More comparison between R-SPIDER and R-SRG with adaptive learning rates. } \label{comparsdfwe45twetgfasfwr}
\vspace{-1.0em}
\end{figure*}
\end{document}